\def\obrazek#1#2{\pgfdeclareimage[height=#2]{#1}{#1}
\pgfuseimage{#1}}
\def\Ne{{\mathcal N}}
\def\Q{\mathbb Q}
\def\T{\mathbb T}
\def\P{\mathbb P}
\def\hbar{h}
\def\quapr{\ }
\DeclareMathOperator\R{\mathbb R}
\DeclareMathOperator\C{\mathbb C}
\DeclareMathOperator\Z{\mathbb Z}
\DeclareMathOperator\N{\mathbb N}
\DeclareMathOperator\I{\mathcal I}
\DeclareMathOperator\Sym{Sym}
\DeclareMathOperator\csm{c^{sm}}
\DeclareMathOperator\spa{span}
\DeclareMathOperator\rank{rank}
\DeclareMathOperator\Hom{\mathrm Hom}
\DeclareMathOperator\Gr{\mathrm Gr}
\DeclareMathOperator\codim{codim}
\DeclareMathOperator\ch{conv}
\DeclareMathOperator\Var{Var}
\DeclareMathOperator\id{id}
\DeclareMathOperator\pt{pt}
\DeclareMathOperator\Rep{R}
\DeclareMathOperator\rk{rk}
\DeclareMathOperator{\GL}{GL}
\DeclareMathOperator\Fl{Fl}
\DeclareMathOperator\F{\mathcal F}
\newcommand{\aalpha}{{\boldsymbol \alpha}}
\newcommand{\bbeta}{{\boldsymbol \beta}}
\def\G{{\bf G}}
\def\HH{{\bf H}}
\def\Tc{{\bf T}}
\newtheorem{fact}{Fact}[section]
\newtheorem{lemma}[fact]{Lemma}
\newtheorem{theorem}[fact]{Theorem}
\newtheorem{definition}[fact]{Definition}
\newtheorem{example}[fact]{Example}
\newtheorem{rremark}[fact]{Remark}
\newenvironment{remark}{\begin{rremark} \rm}{\end{rremark}}
\newtheorem{proposition}[fact]{Proposition}
\newtheorem{corollary}[fact]{Corollary}
\DeclareMathOperator\tauy{mC}
\DeclareMathOperator\ts{mS}%
\def\tilde#1{\widetilde{#1}}
\author{L\'aszl\'o M. Feh\'er}
\address{Institute of Mathematics, E\"otv\"os University Budapest, Hungary}
\email{lfeher63@gmail.com}
\author{Rich\'ard Rim\'anyi}
\address{Department of Mathematics, University of North Carolina at Chapel Hill, USA}
\email{rimanyi@email.unc.edu}
\author{Andrzej Weber}
\address{Institute of Mathematics, University of Warsaw, Poland}
\email{aweber@mimuw.edu.pl}
\title{Motivic Chern classes and K-theoretic stable envelopes}
\begin{document}

\begin{abstract}
We consider a smooth algebraic variety with an action of a linear algebraic
group acting with finitely many orbits. We study equivariant
characteristic classes of the orbits, namely the equivariant {\em motivic Chern classes}, in the K-theory of the ambient space.
We prove that the motivic Chern class satisfies the axiom system inspired by that of ``K-theoretic stable envelopes'', recently defined by Okounkov and studied in relation with quantum group actions on the K-theory algebra of moduli spaces. We also give explicit formulas for the equivariant motivic Chern classes of Schubert cells and matrix Schubert cells. Lastly, we calculate the equivariant motivic Chern class of the orbits of the $A_2$ quiver representation, which yields formulas for the motivic Chern classes of determinantal varieties and more general degeneracy loci.
\end{abstract}

\maketitle
\section{Introduction}

Characteristic classes of singular varieties are important tools in algebraic and enumerative geometry. These classes live in some extraordinary cohomology theory, and even within the same theory there are a few different flavors of them. For example, in ordinary cohomology theory one has the {\em Chern-Schwartz-MacPherson} (CSM) class (together with its variants, the {\em Segre-Schwartz-MacPherson} (SSM) class, and the {\em characteristic cycle}), the {\em Fulton-Johnson} class, and the {\em Chern-Mather} class.
In K-theory one has Brasselet-Sch\"urmann-Yokura's {\em motivic Chern class} (or its cohomology shadow, the {\em Hirzebruch class}). The corresponding $\chi_y$-genus is a common generalization of the L-genus and the Todd-genus.

In the present paper we define and study the {\em equivariant motivic Chern class}. After their definition in Section 2 we give three contributions to their theory and computation, detailed in the next three subsections. The first contribution is of general nature, while the following two are illustrations of the general principle in the particular cases, intensively studied by many authors.

\subsection{K-theoretic stable envelopes, motivic Chern classes}

We consider a smooth algebraic variety with an action of an algebraic group acting with finitely many orbits. We study equivariant motivic Chern classes of the orbits in the K-theory of the ambient space. In Theorems \ref{thm:intchar} and \ref{thm:axiomatic} we present an axiomatic characterization of the equivariant motivic Chern class. The axiom system is inspired by works of Okounkov. The Bia{\l}ynicki-Birula decomposition plays a crucial role in the proof of one of our axioms (the Newton-polytope axiom), showing that the asymptotic behavior of motivic Chern classes is related with the geometry of attracting sets with respect to one parameter subgroups of the torus.

There are situations when both Okounkov's {\em K theoretic stable envelopes} and our equivariant motivic Chern classes are defined, and after some identifications they live in the same ring. Then the analogy between their axiom systems imply that (under some hypotheses, see details in Remark~\ref{rem:comparison}) the notion of motivic Chern class coincides with the notion of stable envelope for a particular choice of alcove parameter.

 In \cite{RV2} (see also \cite{AMSS}) analogous results are proved in cohomology instead of K-theory: the Chern-Schwartz-MacPherson classes of certain singular varieties are characterized by axioms and they are related with {\em cohomological} stable envelopes.



\subsection{Motivic Chern classes of matrix Schubert varieties}

The theory of {\em cohomological fundamental classes} is well developed. For quiver representations the fundamental class is called quiver polynomial, for singularities of maps it is called Thom polynomial. In any of these areas and in others, it turns out that cohomological fundamental classes {\em should be} expressed in Schur polynomials---because those expansions show stability and positivity properties, e.g. \cite{PW, BR}. Moreover, Schur polynomials are cohomological fundamental classes themselves, namely those of the so-called matrix Schubert varieties \cite{FRSchur}.

The theory of {\em K-theoretic fundamental classes} has a similar shape. The atoms of the theory are Grothendieck polynomials: K-theoretic fundamental classes in quiver and Thom polynomial settings show stability and positivity properties as soon as they are expanded in (stable double) Grothendieck polynomials \cite{B, RS}. Moreover, these Grothendieck polynomials are the K-theoretic fundamental classes of matrix Schubert varieties.

The theory of {\em CSM/SSM classes} (i.e. the theory of cohomological stable envelopes) also has a similar shape. The
atoms of the theory are the $\tilde{s}_\lambda$ classes of \cite{FR}: SSM classes of geometrically relevant varieties, when expanded in the $\tilde{s}_\lambda$ classes show stabilization and positivity properties. Moreover, $\tilde{s}_\lambda$ classes are the SSM classes of matrix Schubert cells.

Hence, it is natural to predict that the atoms of the theory of motivic Chern classes all over geometry will be the motivic Chern classes of matrix Schubert cells. In Theorems \ref{thm:cptmCW} and \ref{thm:tauyMO} we present formulas for the equivariant motivic Chern classes of Schubert cells in partial flag varieties and those of matrix Schubert cells. The formulas will be appropriate versions of the so-called trigonometric weight functions of Tarasov-Varchenko, whose role in Schubert calculus was first discovered in \cite{RTV1}.

\subsection{Determinantal varieties}
In Section \ref{sec:A2} we study the motivic Chern classes of the orbits of $\Hom(\C^k,\C^n)$ acted upon by $\GL_k(\C)\times \GL_n(\C)$. This representation is called the $A_2$ quiver representation, and after projectivization the orbit closures are called determinantal varieties. We prove formulas for the motivic Chern classes of the orbits, both in the traditional ``localization'' form, and also expanded in the building blocks invented in Section \ref{sec:tauyMO}. These results can be viewed as the K-theory generalizations of \cite{PP, FR, xiping}.

\smallskip

In the paper there will be three concrete calculations of equivariant motivic Chern classes: those of Schubert cells, of matrix Schubert cells, and of $A_2$ orbits. These three calculations illustrate the three different methods known to the authors: calculation by interpolation axioms, by resolution with normal crossings, and by the sieve of general resolutions.

\subsection{Acknowledgments}
We are grateful to J. Sch\"urmann for teaching us the notion of motivic Chern class, and for his very helpful comments that significantly improved our paper. We also thank A. Varchenko, S. Jackowski, J. Wi{\'sniewski} for discussions on related topics, and B.~Maultsby for help with visualizations.
L.F.  is  supported by  NKFIH 126683, NKFI 112703 and 112735. R.R. is supported by the Simons Foundation grant 523882. A.W. is supported by the research project of the Polish National Research Center 2016/23/G/ST1/04282 (Beethoven 2, German-Polish joint project).

\section{The motivic Chern class}

In this section we recall the notion of motivic Chern class from \cite{BSY}, see also \cite{CMSS,Sch-MHM}.
The images of motivic Chern classes in cohomology are called Hirzebruch classes. We will set up the equivariant versions of these classes. The equivariant versions of Hirzebruch classes were studied in \cite{WeSelecta,WeBB}.

\subsection{Defining invariants of singular varieties}
Many invariants of smooth compact algebraic varieties can be defined through some operation on the tangent bundle. It is natural to ask whether they can be extended to singular varieties, which of course do not have tangent bundles. The same question applies to open manifolds which do not possess a fundamental class, but can be compactified. One natural approach is to resolve singularities or compactify and apply the preferred invariant to the resolution or compactification. In general the result depends on the resolution/compactification. One can try to modify the result by correction terms coming from the exceptional or boundary divisors. This way Batyrev defined stringy Hodge numbers
\cite{Ba} for varieties with at worst Kawamata log-terminal singularities.
Borisov-Libgober \cite{BoLi} have defined the elliptic cohomology class for the same class of singularities. The construction extends to pairs, also with a group action.
Taking a resolution one usually assumes that the exceptional locus is a sum of smooth divisors intersecting transversely. To prove that the invariant does not depend on the resolution
it is enough to check that it does not change when we modify the resolution by blowing up a smooth center well posed with respect to the existing configuration of divisors.
This is a consequence of Weak Factorization Theorem \cite{Wlodek}. The construction can be carried out in the presence of a group action since by Bierstone and Milman \cite{BiMi} equivariant resolutions exist and the Weak Factorization can be realized in the invariant manner.
For example the Chern-Schwartz-MacPherson class can be defined using this technique, as discovered by Aluffi.  Moreover this way Aluffi obtained new results, similar to the invariance of Batyrev's stringy Hodge numbers.
\medskip

We will apply the described method {\em to define} the equivariant motivic Chern class of singular varieties with arbitrary singularities. In later sections we will present three approaches to their calculation: the direct application of the definition,
 see Theorem \ref{thm:tauyMO}, an axiomatic characterization, see Theorems \ref{thm:intchar}, \ref{thm:axiomatic} and Theorem \ref{thm:cptmCW}, and the sieve method illustrated in Section~\ref{sec:sieve}.

\subsection{The $\tauy$-classes}
Consider the two covariant functors $\Var(-)$ and $K_{alg}(-)[y]$
\[
(\text{smooth  varieties, proper maps})\longrightarrow
(\text{groups, group homomorphisms}).
\]
We do not have to assume that the varieties are quasi-projective.
Recall that $\Var(M)$ is generated by classes of maps $X\to M$, where $X$ can be singular, and $f$ is not necessarily proper, modulo additivity relations see \cite{BSY}. Recall that $K_{alg}(M)[y]$ is the K-theory ring of $M$ with $y$ a formal variable. By K-theory we mean the Grothendieck group generated by locally free sheaves. See the Appendix for a review of algebraic K-theory. The version of K-theory built from the coherent sheaves is denoted by $G(M)$. If $M$ is smooth, then the natural map $K_{alg}(M)\to G(M)$ is an isomorphism. We should have in mind that the push-forward of a sheaf is in a natural way represented in $G(M)$ by a coherent sheaf, which is not necessarily locally free. In $K_{alg}(M)$ the class of this sheaf is represented by its resolution.  The topological K-theory $K_{top}(M)$ is the Grothendieck group made from topological vector bundles. The natural map $K_{alg}(M)\to K_{top}(M)$ in general is not an isomorphism except some rare cases, for example when the space admits a decomposition into algebraic cells. We show that both theories are isomorphic under the assumptions of Theorem \ref{thm:axiomatic}, that is in the situation of our main interest.

The motivic Chern class $\tauy$ is a natural transformation
\[
\Var(M)\to K_{alg}(M)[y].
\]
Thus for a map $X\to M$ we have
 $\tauy([X\to M]) \in K_{alg}(M)[y]$. When there is no confusion we will drop the brackets from the notation and write only $\tauy(X\to M)$, or the even simpler $\tauy(X)$ if $X\subset M$. The transformation of functors $\tauy$ is the unique {\em additive} transformation satisfying the normalization condition
\begin{equation}\label{eqn:normalize}
\tauy(\id_M)=\lambda_y(T^*M),
\end{equation}
where for a vector bundle $E$ we define $\lambda_y(E):=\sum_{i=0}^{\rank E}[\Lambda^iE]y^i$.

In \cite{BSY} singular spaces $M$ were also allowed at the cost of replacing $K_{alg}(M)$ with the K-theory of coherent sheaves $G(M)$. We will not need that generality in this paper, and hence we will not use the notation $G(M)$ any more.

\subsection{The equivariant $\tauy$ class} Let $G$ be an algebraic linear group.
We will use the $G$-equivariant version of $\tauy$: for a smooth \quapr $G$-space $M$ and a $G$-equivariant map $f:X\to M$ of $G$-varieties we assign an element $\tauy(f)=\tauy(X\to M)$ in $K_{alg}^G(M)[y]$. This assignment is additive and satisfies the normalization property (\ref{eqn:normalize}) now read equivariantly. The class satisfies the following properties:
\begin{description}
\item[1. Additivity] If $X=Y\sqcup U$, then $$\tauy(X\to M)=\tauy(Y\to M)+\tauy(U\to M)\,.$$

\item[2. Functoriality] For a proper map $f:M\to M'$ we have $$\tauy(X\stackrel{f\circ g}\to M')=f_*\tauy(X\stackrel{g}\to M)\,.$$

\item[3. Localness] If $U\subset M$ is an open set, then $$\tauy(X\stackrel{f}{\to} M)|U=\tauy(f^{-1}(U)\to U)\,.$$

\item[4. Normalization] We have $$\tauy(\id_M)=\lambda_y(T^*M)\,.$$

\end{description}
Moreover it is uniquely determined by the properties (1),(2) and (4).

By $K_{alg}^G(M)[y]$ we mean the equivariant K-theory built from $G$-equivariant locally free sheaves on $M$.
Since M is
smooth the map from equivariant K-theory of locally free sheaves to the
K-theory of equivariant coherent sheaves is an isomorphism, see
\cite[p.28]{Edi} and the Appendix.

We will apply the following construction of motivic Chern classes  $\tauy(X\to M)$ assuming that $X$ is smooth.

\subsection{Definition of $\tauy(X\to M)$ for smooth $X$}\label{tau-definicja}
 The usual way of defining  motivic Chern classes is using a special resolution.
 \begin{definition}\label{def:pnce} Suppose $f:X\to M$ is a map of smooth $G$-varieties. Then a \emph{proper normal crossing extension} of $f$ is a proper map $\bar f:Y\to M$ with an embedding $\iota:X\hookrightarrow Y$ satisfying $f=\bar f\circ \iota$, such that the variety $Y$ is smooth
 and the complement $Y\setminus \iota(X)=\bigcup_{i=1}^s D_i$
is a simple normal crossing divisor.
 \end{definition}

As discussed in \cite[\S5]{WeBB}
 such proper normal crossing extension always exists.
\medskip

 Now suppose that  $f:X\to M$ is a map of smooth \quapr $G$-varieties and let $\bar f:Y\to M$ a proper normal crossing extension of $f$.
 For $I\subset\underline{s}=\{1,2,\dots,s\}$ let $D_I=\bigcap_{i\in I}D_i$, $f_I=\bar f{|D_I}$, in particular $f_\emptyset=\bar f$.
Then
\begin{equation}\label{alternating-definition}\tauy(f)=\sum_{I\subset \underline{s}}\;(-1)^{|I|}f_{I*}\lambda_y(T^*D_I)\,.\end{equation}

 The independence of the chosen extension follows from a refined version of Weak Factorization, as formulated in \cite{Wlodek}. Here is a sketch of the arguments, essentially a repetition of the proof of \cite[Thm.~5.1]{Bittner}.
Suppose that $Y$ and $Y'$ are two proper normal crossing extensions of $X$.  By \cite[Thm.~0.0.1]{Wlodek} we can assume that $Y'$ is obtained by  a blowup of $Y$ in a smooth $G$-invariant center $C$ which has normal crossing\footnote{A smooth subvariety $C\subset Y$ has normal crossing with a divisor $D$ if at each point there exist a system of local coordinates such that the
components of the divisor containing this point are given by the vanishing
of one of the coordinates, and $C$ is given by the vanishing of a set of
coordinates.}  with $D=Y\setminus X$. Let  $\sigma:Y'\to Y$ be a blow-down. Let $E\subset Y'$ be the exceptional divisor and $D'_I$ be the proper transform of $D_I$.
Comparing the formulas corresponding to the two extensions we see that it remains to check that
\begin{equation}\label{eq:scisors}\sigma_*( \lambda_y(T^*D'_I))-\sigma_*( \lambda_y(T^*(D'_I\cap E)))=\lambda_y(T^*D_I)- i_*\lambda_y(T^*(D_I\cap C)\,,\end{equation}
where $i:D_I\cap C\to C$ is the inclusion. All the different restrictions of $\sigma$ are denoted by $\sigma$ to keep the notation short.

The equality means that for each integer $p$
$$\sigma_*( \Omega^p_{D'_I})-\sigma_*( \Omega^p_{D'_I\cap E})=\Omega^p_{D_I}- \Omega^p_{D_I\cap C}.$$
(We use the notation $\Omega^p$ for $\Lambda^p$ to emphasize that we are pushing forward sheaves.)
The right hand side has no higher cohomology. The equality in K-theory follows from the natural $G$-equivariant isomorphisms\begin{align*}
\Omega^p_{D_I}&\to R^0\sigma_*( \Omega^p_{D'_I}),\\
\Omega^p_{D_I\cap C}&\to R^0\sigma_*( \Omega^p_{D'_I\cap E})\quad\text{projective bundle},\\
R^k\sigma_*( \Omega^p_{D'_I})&\to R^k\sigma_*( \Omega^p_{D'_I\cap E})\quad \text{for }k>0,
\end{align*}
where $R^k\sigma_*$ is the $k$-th derived push forward.
These equivariant isomorphisms can be proved exactly like the corresponding nonequivariant ones in
\cite[Prop.~3.3]{GuNaAz}.

\begin{remark}\rm If $M$ is not smooth then we would have to use the K-theory of equivariant coherent sheaves, not necessarily locally free, as it is done in \cite[Cor.~0.1]{BSY} in the non-equivariant case.\end{remark}
\bigskip

For maps $X\to M$ with $X$ smooth the properties 2.~(Functoriality), 3.~(Localness) and 4.~(Normalization) follow directly from the definition given above. To show 1. (Additivity) we apply the extension of the Bittner \cite{Bittner} result in the equivariant setting: It is enough to check the following equality for a smooth and closed subvariety in a smooth ambient space $i:C\subset X$:
$$\sigma_*(\lambda_y(T^*Bl_CX))-\sigma_*(\lambda_y(T^*E))=\lambda_yT^*X-i_*(\lambda_*(T^*C))\,,$$
where $\sigma:Bl_CX\to X$ is the blow-up map. This formula is exactly a special case of the equality \eqref{eq:scisors}.
 To define and compute in practice the class $\tauy(X\to M)$ we decompose $X$ into the union of smooth locally closed subvarieties and sum up the $\tauy$--classes of the pieces. For details in the case when $G$ is a torus we direct the reader to \cite[Thm. 4.2]{AMSS2}. In all the sections except \S\ref{sec:A2} we will use only the properties 2.--4.~for smooth locally closed subvarieties of a smooth \quapr ambient space.
\bigskip

\noindent{\bf Notation:} In the  following sections except the Appendix by the $K^G(M)$ we will mean the algebraic K-theory $K^G_{alg}(M)$. In the Appendix the topological K-theory is discussed. We show that under the assumptions of Theorem \ref{thm:axiomatic} both theories coincide.

\begin{remark}\rm To construct equivariant Hirzebruch class in cohomology, which is the image of the motivic Chern class with respect to the Riemann-Roch transformation, one can  repeat with some care the arguments of \cite{BSY} line by line in the equivariant setting. This is done via approximating classifying spaces with their finite dimensional skeleta, analogously to how Ohmoto defined the equivariant version of Chern-Schwartz-MacPherson classes in \cite{O1,O2}. This program was carried out in \cite{WeSelecta,WeBB}. To define motivic Chern classes in K-theory we cannot follow this procedure. We have to construct directly an element in equivariant K-theory represented by an equivariant sheaf. We do not use here classifying spaces. Thus the obtained invariant is more refined.
\end{remark}

\subsection{The $\chi_y$ and equivariant $\chi_y$ genera}
The $\tauy$ class of a map $X\to \pt$ to a point is an element of the representation ring
$$\chi_y^G(X):=\tauy(X\to \pt)\in K^G(\pt)[y]=R(G)[y],$$
and it is called the (equivariant) $\chi_y$-genus of $X$.
If $G=\T$ is a torus, i.e. a product of $\GL_1(\C)$'s, then  $\chi_y^\T(X)$ contains no information about the action of $G$:
$$\chi_y^\T(X)=\iota (\chi_y(X))\,,\quad\text{ where } \quad\iota:\Z[y]=R(\{1\})[y]\to R(\T)[y]\,.$$
This property is called the rigidity of the $\chi_y$-genus. For the smooth case see \cite{Musin}.
Applying the equivariant Riemann-Roch transformations \cite{EdGr}
$$td_*^\T:K^\T(X)\to \hat{CH}^\T_*(X)\otimes \Q$$
to the completed equivariant Chow groups (or going from there then to the completed equivariant Borel-Moore homology) and extending by the formal parameter $y$, one obtains the equivariant Hirzebruch class $td_y^\T(X):=td^\T_*(\tauy(id_X))$ from \cite{WeSelecta}, with
$$td_*^\T:K^\T(\pt)\to \hat{CH}^\T_*(\pt)\otimes\Q\simeq \hat H^*_\T(\pt;\Q)$$
injective (as discussed in the appendix). Hence the cohomological rigidity for singular varieties from \cite[Thm.~7.2]{WeSelecta} implies the K-theoretical rigidity.
The same applies to connected (non necessarily reductive) groups for which $K^G(\pt)=R(\T)^W$, where  $\T$ is the maximal torus of $G$ and $W$ is the Weyl group.

\subsection{Notation conventions}

Let $\C^*$ act on $\C$ by $\xi\cdot x=\xi^k x$, and let $\ell$ be the $\C^*$-equivariant line bundle over the point with this action. We will use the notation
\[
K^{\C^*}(\C)=K^{\C^*}(\pt)=\Z[\xi^{\pm 1}], \qquad
[\ell]= \xi^k, \qquad
\text{and hence} \qquad
\lambda_y(\ell^*)=1+y/\xi^k.
\]
That is, by slight abuse of notation we use the same (Greek) letter for the coordinate of $\C^*$ and the generator of $K^{\C^*}(\pt)$.

The map $i^*:K^G(X)\to K^G(U)$ in $K$-theory induced by the embedding $i:U\subset X$ will be called a restriction map and will be denoted by $\alpha\mapsto \alpha{|U}$.

\subsection{The fundamental calculation}\label{sec:fundcalc}
The results of the following basic calculation will be essential in later sections. Let a torus $\T=(\C^*)^r$ act on $\C$, and let the class of this equivariant line bundle over a point be $\alpha\in K^{\T}(\pt)$. Using the notational conventions above, and the additivity of the motivic Chern class we have
\begin{equation}\label{eqn:key}
\tauy(\{0\}\subset \C)=1-1/\alpha,
\qquad
\tauy(\C\subset \C)=1+y/\alpha,
\end{equation}
\[
\tauy(\C-\{0\}\subset \C)=(1+y/\alpha)-(1-1/\alpha)=(1+y)/\alpha.
\]

\begin{remark} \label{rem:coh}
Those familiar with the Chern-Schwartz-MacPherson classes $\csm$ in equivariant cohomology may find it instructive to compare (\ref{eqn:key}) with the analogous expressions
\begin{equation*}\label{eqn:key2}
\csm(\{0\}\subset \C)=a,
\qquad
\csm(\C\subset \C)=1+a,
\qquad
\csm(\C-\{0\}\subset \C)=(1+a)-(a)=1.
\end{equation*}
The CSM class can be obtained from the formulas in (\ref{eqn:key}) by applying the following operations
\begin{itemize}
\item substitute $\alpha=\exp(at)$, $y=-\exp(-\hbar t)$;
\item take the coefficient of $t^d$ in the Taylor expansion in $t$, where $d$ is the dimension of the ambient space;
\item substitute $\hbar=1$;
\end{itemize}
while the class before substituting $\hbar=1$ is called the characteristic cycle class, or co\-ho\-mo\-lo\-gi\-cal stable envelope, see \cite{FR, AMSS} and references therein.
We note that the passage from Hirzebruch class to CSM class was given by Yokura, \cite[Rem.~2]{Yok} by certain substitution and specialization to $y=-1$.
The approach of \cite{BSY,Yok} allows different important specializations like $y=0$ or 1 and with $y=-1$ fitting with CSM classes. Our approach only works for the CSM classes, but gives a different view since it just corresponds to the equivariant Chern character (see also the Appendix, with $t$ fixing the (co)homological degree)
$$ch^{\T\times \C^*}:K^{\T\times \C^*}(\pt)\to \hat{CH}^\T_*(\pt)\otimes \Q[[h]]\simeq \hat H^*_\T(\pt;\Q)[[h]]\,,$$
with $-y=q^{-1}\in \Z[q^{\pm 1}]\simeq K^{\C^*}(\pt)$, see also \cite[\S7]{AMSS2}.

\end{remark}

\section{Newton polytopes and N-smallness}
In this section we consider the ring of Laurent polynomials in $r$ variables $\Z[\alpha_1^{\pm 1},\ldots,\alpha_r^{\pm1}]$.
This ring will enter our geometric calculations below as the $(\C^*)^r$-equivariant K-theory algebra of a point.

\subsection{Convex polytopes and their projections} \label{sec:conv}
A convex polytope in $\R^r$ is the convex hull of finitely many points in $\R^r$. For convex polytopes $U,V \subset \R^r$ we have the Minkowski sum operation
$
U+V=\{u+v\ |\ u\in U, v\in V\}.
$
It is known that this operation (on convex polytopes) satisfies the
{\em Cancellation Law}, \cite[Lemma1]{Ra}
\begin{equation}\label{eqn:CL}
U \subset W \ \Leftrightarrow \ U+V \subset W+V\qquad\text{for a fixed }V\,.
\end{equation}
For $s=(s_1,\ldots,s_r)\in \R^r$ define the linear map $\pi_s: \R^r \to \R$ by $\pi_s(u_1,\ldots,u_r)=s_1 u_1+s_2 u_2+\ldots+s_r u_r$.
The next proposition follows from the general theory of supporting hyperplanes of convex polytopes.

\begin{proposition}\label{prop:criterium}
Let $U$ and $V$ be convex polytopes in $\R^r$. Let $K$ be a union of finitely many hyperplanes in $\R^r$. The following conditions are equivalent:
\begin{itemize}
\item $U\subset V$;
\item $\pi_s(U)\subset \pi_s(V)$ for all $s \in \R^r$;
\item $\pi_s(U)\subset \pi_s(V)$ for all $s \in \Z^r - K$.
\end{itemize}
Moreover, if $U\subsetneq V$, then there exists an $s\in \Z^r - K$ such that $\pi_s(U)\subsetneq \pi_s(V)$. \qed
\end{proposition}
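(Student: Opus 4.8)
The plan is to reduce everything to the classical duality between a convex polytope and its support function. For a convex polytope $W\subset\R^r$ define its support function $h_W(s)=\max_{w\in W}\pi_s(w)$ and the ``lower'' support value $\ell_W(s)=\min_{w\in W}\pi_s(w)$; then $\pi_s(W)=[\ell_W(s),h_W(s)]$ since $\pi_s$ is linear and $W$ is convex and compact. The first implication $U\subset V\Rightarrow\pi_s(U)\subset\pi_s(V)$ for all $s$ is immediate from monotonicity of image under a map. The second implication $\pi_s(U)\subset\pi_s(V)$ for all $s\in\Z^r$ (even avoiding $K$) $\Rightarrow$ $U\subset V$ is the heart of the matter: I would prove the contrapositive, exactly as in the ``Moreover'' clause, so the bulk of the work is the final sentence, and the three-way equivalence follows formally once that is in hand (the middle condition trivially implies the third, and the third implies $U\subset V$ by the contrapositive argument below).

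So suppose $U\not\subset V$. Then there is a point $u_0\in U\setminus V$. Since $V$ is a closed convex set and $u_0\notin V$, the separating hyperplane theorem gives a direction $s_0\in\R^r$ with $\pi_{s_0}(u_0)>h_V(s_0)$; hence $h_U(s_0)\ge\pi_{s_0}(u_0)>h_V(s_0)$, which already shows $\pi_{s_0}(U)\not\subset\pi_{s_0}(V)$ for this real $s_0$. The remaining issues are (i) upgrading $s_0$ to an integer vector, and (ii) keeping it off the finite union of hyperplanes $K$. For (i) I would note that the strict inequality $h_U(s)>h_V(s)$ is an open condition in $s$: both $h_U$ and $h_V$ are continuous (indeed piecewise linear) functions of $s$, being maxima of finitely many linear functionals, so the set $\{s:h_U(s)>h_V(s)\}$ is open and nonempty, hence contains a nonempty open ball $B$. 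For (ii), since $K$ is a finite union of hyperplanes it is nowhere dense, so $B\setminus K$ is still a nonempty open set; it therefore contains a rational point, and clearing denominators (scaling by a positive integer, which preserves both the strict inequality $h_U(s)>h_V(s)$ and the property of lying off the hyperplanes through the origin — if the $H_i$ are linear; if some components of $K$ are affine, one instead just picks a rational point of $B\setminus K$ directly and scales, taking care that scaling a rational point off an affine hyperplane can land on it, so it is cleanest to scale first to get into $\Z^r$ along a ray and then, if necessary, perturb within $B$ by a small rational vector to dodge $K$ before a final integer scaling) yields the desired $s\in\Z^r-K$ with $\pi_s(U)\supsetneq\pi_s(V)$ in the sense that $\pi_s(U)\not\subset\pi_s(V)$.

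Finally, to get the precise ``Moreover'' statement as phrased — $U\subsetneq V$ implies $\pi_s(U)\subsetneq\pi_s(V)$, a \emph{strict} containment of intervals — I would apply the above not to $U\not\subset V$ but symmetrically: from $U\subsetneq V$ we get a point $v_0\in V\setminus U$, separate it from the closed convex set $U$ to obtain $\ell_U(s_0)>\ell_V(s_0)$ or $h_U(s_0)<h_V(s_0)$ on an open set of directions $s_0$, and then run the same rational-approximation-and-scaling argument to land at an integer $s\in\Z^r-K$ where one endpoint of $\pi_s(U)$ is strictly interior to $\pi_s(V)$; combined with the already-established $\pi_s(U)\subset\pi_s(V)$ (which holds for \emph{all} $s$ since $U\subset V$), this gives the strict interval containment. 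The main obstacle is purely bookkeeping: ensuring that the integrality requirement and the avoidance of the finitely many hyperplanes $K$ can be met simultaneously with the strict inequality, which works precisely because strict inequality is an open condition and $\Z^r-K$ is dense in $\R^r$ (it is the complement of a finite union of hyperplanes in a set whose closure is all of $\R^r$). Everything else is the standard support-function dictionary, and the Cancellation Law \eqref{eqn:CL} is not even needed here — it will be used later, not in this proposition.
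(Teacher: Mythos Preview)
Your argument is correct and follows precisely the route the paper gestures at: the paper offers no proof beyond the sentence ``follows from the general theory of supporting hyperplanes of convex polytopes,'' and your support-function/separating-hyperplane argument is exactly that theory spelled out. The only muddled spot is the parenthetical about affine hyperplanes in $K$; the clean fix is to note that $\{s:h_U(s)>h_V(s)\}$ is an open \emph{cone} (both sides are positively homogeneous), pick a rational ray inside it, and observe that along that ray all integer multiples avoid any linear component of $K$ and all but finitely many avoid any affine component---so some integer multiple lies in $\Z^r\setminus K$. In the paper's applications the hyperplanes are linear anyway, so this subtlety never arises.
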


\subsection{Newton polytopes of Laurent polynomials and their toric substitutions}

For an element $f\in \Z[\alpha_1^{\pm 1},\ldots,\alpha_r^{\pm1}]$ we define its Newton polytope $\Ne(f)$ to be the convex hull
of the finite set
\[
\{(u_1,\ldots, u_r)\in \Z^r\ |\ \text{the coefficient of }
\alpha_1^{u_1}\ldots \alpha_r^{u_r}\text{ in } f \text{ is not } 0\}
\]
in $\R^r$. We have
\begin{equation} \label{eqn:Nprop}
\Ne(fg)=\Ne(f)+\Ne(g), \qquad
\Ne(f+g)\subset conv.hull(\Ne(f),\Ne(g)).
\end{equation}
\begin{definition}\label{def:toric}
For $s=(s_1,\ldots,s_r)\in \Z^r$ consider the map (called one parameter subgroup)\qquad
$
\kappa_s:\C^*\to (\C^*)^r,  \ \xi\mapsto (\xi^{s_1},\ldots,\xi^{s_r}).
$
The induced ring homomorphism
$
\Z[\alpha_1^{\pm1},\ldots,\alpha_r^{\pm1}]\to \Z[\xi^{\pm1}]
$
will be called the $\alpha=\xi^s$ toric substitution, and will be denoted by $f(\alpha)\mapsto f(\xi^s)$ or $f(\alpha)\mapsto f(\alpha)|_{\alpha=\xi^s}$.
\end{definition}

Now we discuss the relation between the projection $\pi_s$ of Section~\ref{sec:conv} and the toric substitution.

\begin{proposition}\label{prop:crit2}
Let $f,g\in \Z[\alpha_1^{\pm 1},\ldots,\alpha_r^{\pm 1}]$.
\begin{itemize}
\item
There exists a finite union of hyperplanes $K\subset \Z^r$ such that
\[
\Ne(f)\subset \Ne(g) \qquad \Longrightarrow \qquad \Ne(f(\xi^s))\subset \Ne(g(\xi^s)) \ \forall s\in \Z^r-K.
\]
\item If $K\subset \Z^r$ is a finite union of hyperplanes then
\[
\Ne(f(\xi^s))\subset \Ne(g(\xi^s)) \ \forall s\in \Z^r-K  \qquad \Longrightarrow \qquad \Ne(f)\subset \Ne(g).
\]
\item Let $K\subset \Z^r$ be a finite union of hyperplanes. If $\Ne(f)\subsetneq \Ne(g)$ then there is an $s\in \Z^r-K$ such that $\Ne(f(\xi^s))\subsetneq \Ne(g(\xi^s))$.
\end{itemize}
\end{proposition}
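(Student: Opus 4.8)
The plan is to deduce all three statements from Proposition~\ref{prop:criterium} by means of one auxiliary observation: for a fixed \emph{nonzero} $f$, the equality $\Ne(f(\xi^s))=\pi_s(\Ne(f))$ holds for every $s$ lying outside a suitable finite union of rational hyperplanes $H_f\subset\R^r$. (If $f=0$ everything is vacuous since $\Ne(0)=\emptyset$, so assume $f,g\neq 0$ from now on.)

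First I would note the inclusion $\Ne(f(\xi^s))\subseteq\pi_s(\Ne(f))$, valid for \emph{all} $s\in\Z^r$: a monomial $\alpha_1^{u_1}\cdots\alpha_r^{u_r}$ of $f$ is sent to $\xi^{\pi_s(u)}$, so every exponent occurring in $f(\xi^s)$ lies in the image under $\pi_s$ of the support of $f$, and passing to convex hulls (which for a linear map and a finite set commutes with taking images) gives the claim. The substance of the auxiliary observation is the reverse inclusion. To get it, let $H_f$ be the union, over all pairs of distinct vertices $v\neq w$ of $\Ne(f)$, of the linear hyperplanes $\{s\in\R^r:\langle s,v-w\rangle=0\}$; since $\Ne(f)$ has finitely many vertices, this is a finite union of rational hyperplanes. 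For $s\notin H_f$ the functional $\pi_s$ takes distinct values on the vertices of $\Ne(f)$, hence attains its maximum (resp.\ minimum) over $\Ne(f)$ at a single vertex $v^+$ (resp.\ $v^-$): a face on which $\pi_s$ is maximal contains a vertex, and if it contained two it would contradict the separation of vertices. Consequently no monomial of $f$ other than $\alpha^{v^+}$ maps to $\xi^{\pi_s(v^+)}$, so the coefficient of $\xi^{\pi_s(v^+)}$ in $f(\xi^s)$ is the coefficient of $\alpha^{v^+}$ in $f$, which is nonzero because $v^+$ is a vertex of $\Ne(f)$; the same holds at $v^-$. Therefore the interval $\Ne(f(\xi^s))$, which is contained in $\pi_s(\Ne(f))=[\pi_s(v^-),\pi_s(v^+)]$, contains both of its endpoints and hence equals it.

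Granting this, the three bullets are immediate. For the first, take $K=H_g$; then for $s\in\Z^r-K$ one has $\Ne(f(\xi^s))\subseteq\pi_s(\Ne(f))\subseteq\pi_s(\Ne(g))=\Ne(g(\xi^s))$, the middle inclusion being monotonicity of $\pi_s$ applied to $\Ne(f)\subseteq\Ne(g)$. For the second, replace the given $K$ by $K'=K\cup H_f\cup H_g$, still a finite union of hyperplanes. If $\Ne(f)\not\subseteq\Ne(g)$, then by the equivalence of the first and third items of Proposition~\ref{prop:criterium} (used with the collection $K'$) there is $s\in\Z^r-K'$ with $\pi_s(\Ne(f))\not\subseteq\pi_s(\Ne(g))$; since $s\notin H_f\cup H_g$, the auxiliary observation rewrites this as $\Ne(f(\xi^s))\not\subseteq\Ne(g(\xi^s))$ with $s\notin K$, contradicting the hypothesis, so $\Ne(f)\subseteq\Ne(g)$. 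For the third, again put $K'=K\cup H_f\cup H_g$; the ``Moreover'' clause of Proposition~\ref{prop:criterium}, applied to $\Ne(f)\subsetneq\Ne(g)$ with the collection $K'$, yields $s\in\Z^r-K'$ with $\pi_s(\Ne(f))\subsetneq\pi_s(\Ne(g))$, and the auxiliary observation turns this into $\Ne(f(\xi^s))\subsetneq\Ne(g(\xi^s))$ for this $s\in\Z^r-K$.

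The only real work is the auxiliary observation, that is, ruling out cancellation of the extreme monomials after a generic toric substitution; the key point is that a generic one-parameter direction $s$ realizes the two extreme exponents of $f$ at unique monomials, which therefore cannot cancel. Once that is in place, the rest is routine bookkeeping with finite unions of hyperplanes together with the already-established Proposition~\ref{prop:criterium}.
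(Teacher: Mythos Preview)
Your proof is correct and follows essentially the same approach as the paper. The paper's proof is a two-line sketch: it records the observation
\[
\Ne(f(\xi^s))\begin{cases}\subset \pi_s(\Ne(f))&\text{for all }s\in\Z^r,\\=\pi_s(\Ne(f))&\text{for }s\in\Z^r-K,\end{cases}
\]
for a suitable finite union of hyperplanes $K$, and then says the three bullets follow from Proposition~\ref{prop:criterium}. You have supplied exactly the details the paper omits---in particular the construction of $H_f$ via pairs of vertices and the argument that a generic $\pi_s$ attains its extrema at unique vertices, preventing cancellation---and then carried out the bookkeeping with $K'=K\cup H_f\cup H_g$ that the paper leaves implicit.
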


\begin{proof} The statements follow from Proposition \ref{prop:criterium} after observing that
\begin{equation}\label{eqn:comp}
\Ne( f( \xi^s) )
\begin{cases}
\subset \pi_s\left( \Ne( f) \right)
& \text{for all $s\in \Z^r$,} \\
 = \pi_s\left( \Ne( f) \right) & \text{for $s\in \Z^r-K,$}
\end{cases}
\end{equation}
where $K$ is a finite union of hyperplanes.
\end{proof}

\subsection{The N-smallness property}

\begin{definition}
Let a rational function $h(\alpha_1,\ldots,\alpha_r)$
be written as the ratio $f_1/f_2$, with $f_1,f_2\in \Z[\alpha_1^{\pm 1},\ldots,\alpha_r^{\pm1}]$. We say that $h$ is N-small (N for ``Newton polytope''), if $\Ne(f_1)\subset \Ne(f_2)$.
\end{definition}
Here $h=f_1/f_2$ belongs to the field of fractions $Frac(\Z[\alpha_i^{\pm1}])$ of the  integral domain $\Z[\alpha_i^{\pm1}]$.
Observe that the definition makes sense. Indeed, if $h$ is presented in two different ways as a ratio, then they are necessary $f_1/f_2=(f_1\cdot g)/(f_2\cdot g)$; and claims (\ref{eqn:CL}) and (\ref{eqn:Nprop}) imply that one presentation is N-small if and only if the other presentation is N-small.

\begin{lemma} \label{lem:addmult}
The set of N-small rational functions is closed under addition and multiplication.
\end{lemma}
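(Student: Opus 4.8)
The plan is to reduce everything to the two properties of Newton polytopes already recorded in~\eqref{eqn:Nprop}, together with the Cancellation Law~\eqref{eqn:CL}, and to be careful about choosing common denominators so that the well-definedness remark applies. Write $h=f_1/f_2$ and $h'=g_1/g_2$ with $f_i,g_i\in\Z[\alpha_1^{\pm1},\ldots,\alpha_r^{\pm1}]$, and assume $\Ne(f_1)\subset\Ne(f_2)$ and $\Ne(g_1)\subset\Ne(g_2)$.

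\textbf{Multiplication.} First I would treat the product: $hh'=(f_1g_1)/(f_2g_2)$. By the first identity in~\eqref{eqn:Nprop}, $\Ne(f_1g_1)=\Ne(f_1)+\Ne(g_1)$ and $\Ne(f_2g_2)=\Ne(f_2)+\Ne(g_2)$. Applying the Minkowski-sum monotonicity (which follows immediately from~\eqref{eqn:CL}: from $\Ne(f_1)\subset\Ne(f_2)$ we get $\Ne(f_1)+\Ne(g_1)\subset\Ne(f_2)+\Ne(g_1)$, and from $\Ne(g_1)\subset\Ne(g_2)$ we get $\Ne(f_2)+\Ne(g_1)\subset\Ne(f_2)+\Ne(g_2)$, then chain the two inclusions), we obtain $\Ne(f_1g_1)\subset\Ne(f_2g_2)$, so $hh'$ is N-small.

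\textbf{Addition.} Next, for the sum write $h+h'=(f_1g_2+g_1f_2)/(f_2g_2)$ over the common denominator $f_2g_2$. By the second identity in~\eqref{eqn:Nprop}, $\Ne(f_1g_2+g_1f_2)\subset\ch\bigl(\Ne(f_1g_2),\Ne(g_1f_2)\bigr)$, so it suffices to show that each of $\Ne(f_1g_2)$ and $\Ne(g_1f_2)$ is contained in $\Ne(f_2g_2)$; then their convex hull is too, since $\Ne(f_2g_2)$ is convex. But $\Ne(f_1g_2)=\Ne(f_1)+\Ne(g_2)\subset\Ne(f_2)+\Ne(g_2)=\Ne(f_2g_2)$ by the same Cancellation-Law monotonicity as above, and symmetrically $\Ne(g_1f_2)=\Ne(g_1)+\Ne(f_2)\subset\Ne(g_2)+\Ne(f_2)=\Ne(f_2g_2)$. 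Hence $\Ne(f_1g_2+g_1f_2)\subset\Ne(f_2g_2)$ and $h+h'$ is N-small.

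\textbf{Remarks on subtleties.} The main point to watch is that N-smallness is a property of the rational function, not of a chosen representation; this is exactly the well-definedness observation made just before the lemma, so once we verify the inclusion for \emph{some} representation of $hh'$ and of $h+h'$ we are done. A minor degenerate case: if some $f_i$ or $g_i$ is the zero polynomial the fraction is either undefined or zero; for $h=0$ one uses the representation $0/1$ with $\Ne(0)=\emptyset\subset\Ne(1)=\{0\}$, and the argument goes through unchanged (with the convention $\emptyset+P=\emptyset$). I do not anticipate a genuine obstacle here — the only thing requiring a line of care is invoking monotonicity of Minkowski sums, which is a direct consequence of~\eqref{eqn:CL} rather than an independent fact.
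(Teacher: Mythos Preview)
Your proof is correct and essentially the same as the paper's: both write $hh'=(f_1g_1)/(f_2g_2)$ and $h+h'=(f_1g_2+f_2g_1)/(f_2g_2)$ and verify the Newton-polytope containments via \eqref{eqn:Nprop} and Minkowski-sum monotonicity. The only differences are cosmetic---you handle multiplication before addition and spell out the monotonicity chain and the degenerate zero case more explicitly than the paper does.
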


\begin{proof}
Let $f_1/f_2$ and $g_1/g_2$ be N-small.
We have $\Ne(f_1g_2) = \Ne(f_1)+\Ne(g_2) \subset \Ne(f_2)+\Ne(g_2)=\Ne(f_2g_2)$. Similarly $\Ne(f_2g_1)
\subset
\Ne(f_2g_2)$. Then (\ref{eqn:Nprop}) implies $\Ne(f_1g_2 + f_2g_1) \subset \Ne(f_2g_2)$ which proves the claim for addition.
The claim for multiplication is proved by $\Ne(f_1g_1)=\Ne(f_1)+\Ne(g_1)\subset \Ne(f_2)+\Ne(g_2)=\Ne(f_2g_2)$.
\end{proof}

The following analytic characterization of N-smallness of rational functions will be useful.

\begin{proposition}\label{prop:analytic} Let $h(\alpha_1,\ldots,\alpha_r)$ be a rational function.
\begin{itemize}
\item There exists a finite union of hyperplanes $K\subset \Z^r$ such that
\[
h\text{ is N-small} \qquad\Longrightarrow\qquad \lim_{\xi\to \infty} h(\xi^s) \text{ is finite for all } s\in \Z^r-K.
\]
\item
Let $K\subset \Z^r$ be a finite union of hyperplanes. Then
\[
\lim_{\xi\to \infty} h(\xi^s) \text{ is finite for all } s\in \Z^r-K
\qquad\Longrightarrow\qquad
h\text{ is N-small}.
\]
\end{itemize}
\end{proposition}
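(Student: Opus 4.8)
The plan is to reduce everything to the already-established combinatorial criterion of Proposition~\ref{prop:crit2} together with the dictionary (\ref{eqn:comp}) between Newton polytopes of Laurent polynomials and their toric substitutions, and then to observe that finiteness of the limit $\lim_{\xi\to\infty}h(\xi^s)$ is nothing but an N-smallness statement for the one-variable rational function $h(\xi^s)$. Write $h=f_1/f_2$ with $f_1,f_2\in\Z[\alpha_1^{\pm1},\ldots,\alpha_r^{\pm1}]$. After the toric substitution $\alpha=\xi^s$ we obtain $h(\xi^s)=f_1(\xi^s)/f_2(\xi^s)\in Frac(\Z[\xi^{\pm1}])$. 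The first elementary observation I would record is: a one-variable Laurent-polynomial ratio $p(\xi)/q(\xi)$ has a finite limit as $\xi\to\infty$ if and only if the top exponent appearing in $p$ does not exceed the top exponent appearing in $q$, i.e. if and only if $\max\Ne(p)\le\max\Ne(q)$; and for one-variable Newton polytopes (which are just intervals) the containment $\Ne(p)\subset\Ne(q)$ is equivalent to the pair of inequalities $\min\Ne(q)\le\min\Ne(p)$ and $\max\Ne(p)\le\max\Ne(q)$. So finiteness of the limit at $\infty$ is genuinely weaker than N-smallness in one variable; it only controls the top end, not the bottom. This is the one place requiring care, and I address it below.

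For the first bullet: assume $h$ is N-small, i.e. $\Ne(f_1)\subset\Ne(f_2)$. By the first item of Proposition~\ref{prop:crit2} there is a finite union of hyperplanes $K\subset\Z^r$ such that $\Ne(f_1(\xi^s))\subset\Ne(f_2(\xi^s))$ for all $s\in\Z^r-K$. By the one-variable observation above, $\Ne(f_1(\xi^s))\subset\Ne(f_2(\xi^s))$ forces $\max\Ne(f_1(\xi^s))\le\max\Ne(f_2(\xi^s))$, hence $\lim_{\xi\to\infty}h(\xi^s)=\lim_{\xi\to\infty}f_1(\xi^s)/f_2(\xi^s)$ is finite. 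This proves the first bullet with that same $K$.

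For the second bullet: suppose $\lim_{\xi\to\infty}h(\xi^s)$ is finite for all $s\in\Z^r-K$, and suppose for contradiction that $h$ is not N-small, i.e. $\Ne(f_1)\not\subset\Ne(f_2)$. Here is where the asymmetry between ``top end only'' and ``both ends'' must be handled: from $\lim_{\xi\to\infty}h(\xi^s)$ finite we only get $\max\Ne(f_1(\xi^s))\le\max\Ne(f_2(\xi^s))$, which via (\ref{eqn:comp}) says $\max\pi_s(\Ne(f_1))\le\max\pi_s(\Ne(f_2))$ for $s$ outside a finite union of hyperplanes. But running $s$ over all lattice points outside a finite union of hyperplanes, the condition $\max\pi_s(A)\le\max\pi_s(B)$ for all such $s$ is exactly the support-function criterion characterizing $A\subset B$ (this is the content of Proposition~\ref{prop:criterium}, whose supporting-hyperplane proof uses precisely the functionals $\pi_s$, and the final clause of that proposition guarantees a witnessing $s\in\Z^r-K$ when the containment fails). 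Concretely: if $\Ne(f_1)\subsetneq\Ne(f_2)$ fails because some vertex $v$ of $\Ne(f_1)$ lies outside $\Ne(f_2)$, separate $v$ from $\Ne(f_2)$ by a rational hyperplane with integral normal $s$, perturb $s$ within its rational cone to avoid $K$ and to keep $v$ the unique $\pi_s$-maximizer of $\Ne(f_1)$; then $\pi_s(v)>\max\pi_s(\Ne(f_2))$, so after substitution $\max\Ne(f_1(\xi^s))>\max\Ne(f_2(\xi^s))$ and $\lim_{\xi\to\infty}h(\xi^s)=\infty$, contradicting the hypothesis. Hence $\Ne(f_1)\subset\Ne(f_2)$, i.e. $h$ is N-small. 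I expect the main obstacle to be exactly this last separation-and-perturbation step — ensuring one can choose the separating functional $s$ to be integral, to avoid the excluded hyperplanes $K$, and to make the substitution-versus-projection relation (\ref{eqn:comp}) an equality — but Proposition~\ref{prop:criterium}'s final clause and the equality case in (\ref{eqn:comp}) are tailored to supply precisely this, so the argument closes.
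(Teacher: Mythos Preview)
Your proposal is correct and follows essentially the same route as the paper: reduce to Proposition~\ref{prop:crit2} and the one-variable calculus fact about limits of Laurent-polynomial ratios. Your explicit treatment of the ``top end only'' asymmetry is a genuine point the paper's one-line proof glosses over; your support-function/separation argument resolves it correctly, though a slightly quicker way to see it is that hyperplanes through the origin are symmetric under $s\mapsto -s$, so $s\in\Z^r-K$ iff $-s\in\Z^r-K$, and finiteness of $\lim_{\xi\to\infty}h(\xi^{-s})$ gives exactly the missing bottom-end inequality $\min\pi_s(\Ne(f_1))\ge\min\pi_s(\Ne(f_2))$, after which item~2 of Proposition~\ref{prop:crit2} applies directly.
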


\begin{proof} The statements follow from Proposition \ref{prop:crit2} and elementary calculus knowledge about the limits of rational functions in one variable.
\end{proof}

\section{Newton polytope properties of motivic Chern classes of subvarieties}
We recall the notion of the Bia{\l}ynicki-Birula cell which was originally defined for $\C^*$, but we need it for a one parameter subgroup of $\C^*\to\T$.

Let $\T=(\C^*)^r$ act on a variety $V$, and consider the one parameter subgroup $\kappa_s:\C^*\to \T$ of De\-fi\-ni\-tion~\ref{def:toric}. Let $F$ be a connected component of the fixed point set $V^{\kappa_s(\C^*)}$. The Bia{\l}ynicki-Birula cell is the subvariety
\[
V^{s}_F=\{x\in V\,:\, \lim_{\xi\to \infty} (\kappa_s(\xi)\cdot x)\in F\}
\,.\]
If $\T=\C^*$, $s=1$, then $\kappa_1=id$ and $V^{s}_F$ coincides with the minus-cell $V^-_F$ defined in \cite{BB}. If $0\in V$ is an isolated fixed point we will write $V^{s}_0$ instead of $V^{s}_{\{0\}}$. Note that the limit $\lim_{\xi\to \infty}$ may not exist if $V$ is not complete, and hence in that case the Bia{\l}ynicki-Birula cells may not cover the whole $V$. In our argument, for convenience we will apply the equivariant completion of \cite{Sum}. Then the space $V$ is decomposed into cells, see the discussion in \cite[Cor.~4]{WeBB}.

\begin{remark} If $V$ is a vector space with a linear action of $\T$, then the minus-cell $V^s_0\subset V$ of the origin $0\in V$ as an isolated fixed point is the subspace spanned by the weight vectors with negative weights with respect to the $\C^*$-action induced by~$\kappa_s$.
\end{remark}

The  Bia{\l}ynicki-Birula decomposition was studied recently by Drinfeld and Gaitsgory \cite{DG} with an application to derived categories. In their construction the functorial properties of the stable and unstable sets for a $\mathbb G_m$--action play a crucial  role, see also \cite{JeSi}. We need the functorial properties of the Bia{\l}ynicki-Birula decomposition to control the asymptotic behavior of the motivic Chern classes.

In Newton polytope considerations we will treat the variable $y$ as a constant, so the Newton polytope of $f=\sum a_I\alpha^I$ with $a_I\in \Z[y]$ is equal to $conv.hull\{I\;:\;a_I\neq 0\in\Z[y]\}$. Similarly the notion of $N$-smallness is defined for elements of $Frac(\Z[\alpha^{\pm 1}][y])$. Following \cite{WeBB} we have

\begin{theorem} \label{thm:smallness}
Let $V$ be a smooth \quapr variety with $\T=(\C^*)^r$ action. Suppose $0\in V$ is an isolated fixed point. Let $\Sigma \subset V$ be an invariant subvariety, not necessarily closed.
Then for almost all $s\in \Z^r$ (i.e. the set of exceptions is contained in a finite union of hyperplanes)
we have
\[
\lim_{\xi\to \infty}\left(\left.\frac{\tauy(\Sigma\subset V){|0}}{\lambda_{-1}(T^*_0V)}\right|_{\alpha=\xi^s}\right)=\;\chi_y(\Sigma\cap V^{s}_0)\,.
\]
Hence
$$\Ne( \tauy(\Sigma\subset V){|0}) \subset \Ne( \lambda_{-1}(T^*_0V))$$
and
$$\frac{\tauy(\Sigma\subset V){|0}}{\lambda_{-1}(T^*_0V)}\in Frac(K^\T(\pt))[y]$$
is $\Ne$-small.
\end{theorem}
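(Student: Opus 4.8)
The plan is to reduce everything to a computation on the Bia{\l}ynicki-Birula cell $V^{s}_0$, using the functoriality and localness axioms of the motivic Chern class together with the additivity axiom to decompose $\Sigma$ into pieces. First I would fix a generic one parameter subgroup $\kappa_s:\C^*\to\T$; genericity will mean that $s$ avoids the (finite union of) hyperplanes where the toric substitution $\alpha=\xi^s$ fails to compute the projection $\pi_s$ of Newton polytopes (Proposition~\ref{prop:crit2}) and where the fixed point structure of $V^{\kappa_s(\C^*)}$ degenerates; I would also need $0$ to remain isolated in $V^{\kappa_s(\C^*)}$ and the weights of $T_0V$ under $\kappa_s$ to be all nonzero. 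After replacing $V$ by an equivariant completion \`a la Sumihiro, we may assume $V$ is complete, so that the Bia{\l}ynicki-Birula cells cover $V$; the restriction to $0$ and the relevant limit are unaffected by this since they are local near $0$.

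The heart of the argument is the identity $\lim_{\xi\to\infty}\bigl(\tauy(\Sigma\subset V){|0}\big/\lambda_{-1}(T^*_0V)\bigr)|_{\alpha=\xi^s}=\chi_y(\Sigma\cap V^{s}_0)$. The way I would see this: stratify $\Sigma$ into smooth locally closed $\T$-invariant pieces and, by additivity, reduce to $\Sigma$ smooth. Then pick a proper normal crossing extension $\bar f:Y\to V$ of the inclusion, compatible with the $\C^*$-action induced by $\kappa_s$ (equivariant resolution exists). Now $\tauy(\Sigma\subset V){|0}$ is an alternating sum of $(f_{I*}\lambda_y(T^*D_I))|0$, and each such term is a push-forward to the point $0$, i.e. a $\chi_y$-type integral over the fiber $\bar f^{-1}(0)$ intersected with the strata $D_I$. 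The key point is that, after the toric substitution and the limit $\xi\to\infty$, the localization formula (Atiyah--Bott/Lefschetz in K-theory) for the push-forward to $0$ gets dominated by the contributions of the fixed components of $Y^{\kappa_s(\C^*)}$ that lie in the attracting set of $\bar f^{-1}(0)$; concretely the fraction $\bigl(f_{I*}\beta\big/\lambda_{-1}(T^*_0V)\bigr)|_{\alpha=\xi^s}$ has a finite limit equal to the corresponding characteristic-class integral over the BB-cell of the fiber, and summing the alternating contributions reassembles $\chi_y$ of the whole BB-cell $\Sigma\cap V^{s}_0$ (here one uses that $\chi_y$ is computed by the same alternating sum over a normal crossing compactification, now of $\Sigma\cap V^{s}_0$, and that taking BB-cells is compatible with the normal-crossing stratification). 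This is essentially the equivariant $\chi_y$-genus analogue of the cohomological computation in \cite{WeBB}, and I would follow that blueprint, checking that each derived push-forward appearing is cohomologically concentrated in degree $0$ in the limit so that the K-theoretic classes behave as expected.

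Once the limit formula is established, the remaining two assertions are immediate. The limit $\chi_y(\Sigma\cap V^{s}_0)$ is finite (it lives in $\Z[y]$) for all generic $s$, so by the second bullet of Proposition~\ref{prop:analytic} applied to $h=\tauy(\Sigma\subset V){|0}\big/\lambda_{-1}(T^*_0V)$, the rational function $h$ is N-small; equivalently $\Ne(\tauy(\Sigma\subset V){|0})\subset\Ne(\lambda_{-1}(T^*_0V))$ by the definition of N-smallness and the fact that $\lambda_{-1}(T^*_0V)=\prod(1-\alpha_i^{-1})$ has the honest Newton polytope (a box) of that product. Note one subtlety: $\lambda_{-1}(T^*_0V)$ is a zero divisor issue only if some weight is trivial, but $0$ being an isolated fixed point of the $\T$-action guarantees all weights of $T_0V$ are nonzero, so $\lambda_{-1}(T^*_0V)$ is a nonzero element of $Frac(K^\T(\pt))[y]$ and the quotient makes sense.

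The main obstacle I anticipate is the limit identity itself --- specifically, justifying the interchange of the alternating sum over the normal-crossing strata with the $\xi\to\infty$ limit, and identifying the surviving terms precisely with the normal-crossing compactification of $\Sigma\cap V^{s}_0$ rather than of something larger. This requires care with which fixed components of the resolution $Y$ contribute in the limit (only those whose attracting cells meet $\bar f^{-1}(0)$) and a comparison of two normal-crossing models: one for $\Sigma\subset V$ restricted near $0$, and one for $\Sigma\cap V^{s}_0$ as a variety in its own right. Establishing that the BB-decomposition is compatible with a simple normal crossing configuration (so that $D_I\cap V^{s}_0$ is again normal crossing in $V^{s}_0$) is exactly where the functoriality of the Bia{\l}ynicki-Birula decomposition, referenced via \cite{DG, JeSi}, enters, and making that rigorous in the equivariant K-theoretic setting is the crux.
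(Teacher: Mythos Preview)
Your outline is essentially the paper's proof: reduce to smooth $\Sigma$ by additivity, pass to an equivariant compactification, take a proper normal crossing extension $\hat\eta:\hat Y\to\hat V$, expand $\tauy(\Sigma\subset V)|0$ as the alternating sum over the strata $D_I$, apply K-theoretic localization to each $\eta_{I*}(\lambda_y(T^*D_I))|0\big/\lambda_{-1}(T^*_0V)$, and take the $\xi\to\infty$ limit term by term; the N-smallness then follows from Proposition~\ref{prop:analytic} exactly as you say.

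Where you diverge from the paper is in the difficulty you anticipate at the end. You do \emph{not} need to compare two normal-crossing models, nor to check that the BB decomposition is compatible with the SNC configuration. The paper's closure is more elementary. For each smooth complete $D_I$ and each fixed component $F\subset D_I^{\C^*}$ with $\hat\eta(F)=\{0\}$, the limit of the localization summand $\lambda_y(T^*D_I)|F\big/\lambda_{-1}(\nu^*_{F/D_I})$ is read off from the scalar identity
\[
\lim_{\xi\to\infty}\frac{1+y\xi^k}{1-\xi^k}=\begin{cases}-y&k>0,\\ 1&k<0,\end{cases}
\]
giving $(-y)^{n_F}\lambda_y(T^*F)$ and, after push-forward to a point, $(-y)^{n_F}\chi_y(F)$. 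Since the BB-cell $(D_I)_F$ is a Zariski-locally-trivial $\C^{n_F}$-bundle over $F$, this equals $\chi_y((D_I)_F)$. From here only \emph{additivity} of $\chi_y$ is used: the union of those $(D_I)_F$ with $\hat\eta(F)=\{0\}$ is exactly $\hat\eta_I^{-1}(V^s_0)$ (properness and equivariance of $\hat\eta$ force limits to commute with $\hat\eta$), so the $I$-th term contributes $\chi_y(\hat\eta_I^{-1}(V^s_0))$, and inclusion--exclusion over $I$ yields $\chi_y(\Sigma\cap V^s_0)$ because $\hat\eta$ is an isomorphism over $\Sigma$. No normal-crossing structure on $\Sigma\cap V^s_0$, no derived-pushforward concentration argument, and no deep functoriality of BB cells beyond ``proper equivariant maps send attracting sets to attracting sets'' is required.
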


\begin{remark}
For a non-isolated fixed point component $F$ the analogous statement is
\[
\lim_{\xi\to \infty}\left(\left.\frac{\tauy(\Sigma\subset V){|F}}{\lambda_{-1}(\nu^*_F)}\right|_{\alpha=\xi^s}\right)=\;\tauy(\Sigma\cap V^{s}_F\to F)\in K(F)[y]\,.
\]
\end{remark}

The proof is a direct application of Proposition \ref{prop:criterium} and \cite[Thm. 10]{WeBB}. For the convenience of the reader here we reformulate the proof of \cite{WeBB} with some simplifications and the notation adapted to our situation.

\begin{proof}By the additivity of $\tauy$ we can assume that $\Sigma$ is smooth.
According to Proposition \ref{prop:analytic} it is enough to prove that
\begin{equation}\label{eqn:lim1}
\lim_{\xi\to \infty} \left( \left.\frac{\tauy(\Sigma\subset V){|0}}{\lambda_{-1}(T^*_0V)}\right|_{\alpha=\xi^s}\right)<\infty.
\end{equation}
for almost all $s\in \Z^r$,
since $\Ne(\tauy(\Sigma\subset V)|0|_{\alpha=\xi^s }) = \pi_s( \Ne(\tauy(\Sigma\subset V)|0))$ for a generically chosen one parameter subgroup,
c.f. Proposition \ref{prop:crit2}.

 Let $s\in \Z^r$ be such that the $\alpha=\xi^s$ substitution does not map any of the weights of the representation to 0; the thus excluded $s$ vectors are contained in a finite union of hyperplanes.

The quantity $(\tauy(\Sigma\subset V){|0}/{\lambda_{-1}(T^*_0V)})|_{\alpha=\xi^s}$ can be interpreted as
$\tauy(\Sigma\subset V){|0}/{\lambda_{-1}(T^*_0V)}$ for the $\C^*$-action obtained from the $\T$-action of the theorem by composing it with $\kappa_s$.
Hence, in the remaining of the proof we work in $\C^*$-equivariant K-theory (without indicating this in our notation), in particular $K^{\C^*}(\{0\})=\Z[\xi^{\pm 1}]$ and want to show that
\[
\lim_{\xi\to \infty} \frac{\tauy(\Sigma\subset V){|0}}{\lambda_{-1}(T^*_0V)}<\infty.
\]
According to our assumption on $s$, the $\C^*$ action has a unique fixed point, $0$.

Let $Y$ be a proper normal crossing extension of the inclusion $\Sigma\to V$ (see Definition \ref{def:pnce}), i.e. $Y$ is
a smooth variety with $\C^*$ action, with an equivariant proper map $\eta: Y \to V$, such that $\eta^{-1}(\Sigma)\to\Sigma$ is an isomorphism
and $Y\setminus \eta^{-1}(\Sigma)$ is a normal crossing divisor.
For convenience let us assume that the map $Y\to V$ extends to a proper normal crossing extension $\hat{\eta}:\hat{Y}\to\hat V$ to a smooth compactification $\hat{V}$ of $V$.
The complement of $\hat{\eta}^{-1}(\Sigma)$ is a normal crossing divisor denoted by $D=\bigcup_{i=1}^k D_i$.
Then
\begin{equation}\label{eq:altern}\tauy(\Sigma\subset\hat{V})=\sum_{I\subset \{1,2,\dots,k\}}(-1)^{|I|}\eta_{I*}(\lambda_y(T^*D_I))\,,\end{equation}
where $D_I=\bigcap_{i\in I} D_i$, $D_\emptyset=\hat{Y}$ and $\eta_{I}$ is the restriction of $\hat\eta$ to $D_I$.
Note that by the locality property of motivic Chern classes $\tauy(\Sigma\subset{V})=\tauy(\Sigma\subset\hat{V})|V$, therefore $\tauy(\Sigma\subset{V})|0=\tauy(\Sigma\subset\hat{V})|0$.
First we show that
\[
\frac{\left(\eta_{I*}(\lambda_y(T^*D_I))\right){|0}}{\lambda_{-1}(T^*_0V)}
\]
is N-small. Since $V^{\C^*}=\{0\}$ the normal bundle to $V^{\C^*}$ is equal to $T_0V$.
By the localization\footnote{By \cite{Sum}, since $V$ is smooth, $0$ has an invariant affine neighbourhood.  We can assume that our arguments are applied to quasiprojective varieties.} theorem in K-theory see, \cite[Thm.~3.5]{Tho}, \cite[Thm. 5.11.7]{ChrissGinz}
\begin{equation}\label{eq:K-localization}
\frac{(\eta_{I*}(\lambda_y(T^*D_I))){|0}}{\lambda_{-1}(T^*_0V)}=
\sum
\eta_{F*}\left(\frac{\lambda_y(T^*D_I){|F}}{\lambda_{-1}(\nu^*_{F/D_I})}\right)\in Frac(K^\T(\pt))[y]\,.\end{equation}
The summation at the right hand side is over the components $F\subset D_I^{\C^*}$ such that $\eta(F)=\{0\}$.
Here $\nu_{F/D_I}$ is the normal bundle to the fixed point set component in $D_I$. To compute the  limit
we first evaluate it in $K^{\C^*}(F)[y]$. The variety $D_I$ is smooth and we obtain
\begin{equation}\label{eq:smoothlimit}
\lim_{\xi \to \infty}\frac{\lambda_y(T^*D_I){|F}}{\lambda_{-1}(\nu^*_{F/D_I})}=(-y)^{n_F}\lambda_y(T^*F),
\end{equation}
where $n_F$ is equal to the dimension of the subbundle of $\nu_{F/D_I}$ with negative weights.
This holds because
\[
\lim_{\xi \to \infty}\frac{1+y\xi^s}{1-\xi^s}=
\begin{cases}
-y & \text{for } s>0\\
1 &  \text{for } s<0,
\end{cases}
\]
see more details in \cite[Thm. 13]{WeBB}.
We conclude that the limit of the push-forward, which is the push-forward of the limit, is finite and equal to $(-y)^{n_F}\chi_y(F)$. This proves our first claim.

\medskip

Now we compute the limit explicitly. Consider the Bia{\l}ynicki-Birula cell $(D_I)_F:=(D_I)_F^{s}$ of $F$ in $D_I$ for our $\C^*$-action $\kappa_s$.
Observe that
\begin{equation}\label{eq:chiminus}
\chi_y((D_I)_F)=(-y)^{n_F}\chi_y(F)
\end{equation}
since by \cite{BB} the limit map $(D_I)_F\to F$ is a locally trivial  fibration in Zariski topology and the fiber is $\C^{n_F}$.
Hence the limit (\ref{eq:smoothlimit}) is equal to $\chi_y((D_I)_F)$.
We have
$$
\bigcup_{F\subset D_I^{\C^*} \,,\;\hat{\eta}(F)=\{0\}}(D_I)_F\;=\;\hat\eta_I^{-1}(V^{s})\,.
$$
Since $\hat\eta$ restricted to the inverse image of $\Sigma$ is an isomorphism, therefore
$$\chi_y(V^{s}_0\cap \Sigma)=\sum_{I\subset \underline{k}}(-1)^{|I|}
\chi_y(\eta_I^{-1}(V_0^{s}))=\sum_{I\subset \underline{k}}(-1)^{|I|}\sum_{F\subset D_I^{\C^*} \,,\;\hat{\eta}(F)=\{0\}}\chi_y((D_I)_F).$$
By (\ref{eq:altern}--\ref{eq:chiminus}) the former sum is the limit of $\tauy(\Sigma\subset V){|0}/\lambda_{-1}(T^*_0V)=\tauy(\Sigma\subset \hat V){|0}/\lambda_{-1}(T^*_0V)$ in $\C^*$-equivariant K-theory. This completes the proof.
\end{proof}

We will be concerned with a property of torus actions, which we will call {\em positive}, c.f. \cite{FP}.

\begin{definition}\label{def:positive}
We call a representation of $\T=(\C^*)^r$  {\em positive}, if any of the following equivalent conditions are satisfied.
\begin{itemize}
\item 
There exists an $s\in \Z^r$ such that the one parameter subgroup $\alpha(\xi)=(\xi^{s_1},\dots,\xi^{s_r})$ of $\T$ acts with positive weights;
\item the weights of the $\T$-action are contained in an open half-space $\{u\in \R^r : \pi_s(u)>0\}$ for some $s\in \Z^r$;
\item the convex hull of the weights of the $\T$-action does not contain $0\in \R^r$.
\end{itemize}
We call a $G$-representation {\em positive} if its restriction to a maximal torus is positive.
\end{definition}

 \begin{corollary}\label{co:strictinc}
If the action of $\T=(\C^*)^r$ on $T_0V$ is in addition positive and $0\not\in\Sigma$, then
\[
\Ne(\tauy(\Sigma\subset V){|0})\subset\Ne(\lambda_{-1}(T^*_0V))\setminus\{0\}.
\]
\end{corollary}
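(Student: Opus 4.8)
\medskip
\noindent\textbf{Proof plan.}
Put $f=\tauy(\Sigma\subset V){|0}$ and $g=\lambda_{-1}(T^*_0V)$ in $K^\T(\pt)[y]$; if $f=0$ there is nothing to prove, so assume $f\neq 0$. Theorem~\ref{thm:smallness} already supplies $\Ne(f)\subset\Ne(g)$, so the whole task is to keep the origin out of $\Ne(f)$. The plan is to split this into two facts: (a) under the positivity hypothesis the origin is a \emph{vertex} of $\Ne(g)$; and (b) the coefficient of $\alpha^0$ in $f$ vanishes. These two together finish the proof: an extreme point of $\Ne(g)$ that happens to lie in the subpolytope $\Ne(f)$ is an extreme point of $\Ne(f)$ as well, hence a vertex of $\Ne(f)$, hence occurs in $f$ with nonzero coefficient; so by (a) and (b) the origin cannot lie in $\Ne(f)$, which is exactly the assertion $\Ne(f)\subset\Ne(g)\setminus\{0\}$.

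For (a) I would argue as follows. If $\alpha^{w_1},\dots,\alpha^{w_n}$ are the $\T$-weights of $T_0V$, then $g=\prod_{j=1}^{n}\bigl(1-\alpha^{-w_j}\bigr)$, so $\Ne(g)=\sum_{j=1}^{n}\ch\{0,-w_j\}$, a Minkowski sum of segments. Positivity of the $\T$-action on $T_0V$ (Definition~\ref{def:positive}) gives an $s\in\Z^r$ with $\pi_s(w_j)>0$ for all $j$; the admissible $s$ form a non-empty open cone, and I additionally require $s$ to lie outside the finite union of hyperplanes on which Theorem~\ref{thm:smallness} may fail (possible, as the cone is full-dimensional). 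Since $\pi_s(-w_j)<0$, on each segment $\ch\{0,-w_j\}$ the functional $\pi_s$ attains its maximum uniquely at the endpoint $0$; summing, $\pi_s$ attains its maximum on $\Ne(g)$ uniquely at $0$, with value $0$. Thus $0$ is a vertex of $\Ne(g)$, and at the same time $\pi_s(u)\le 0$ for every $u\in\Ne(g)$, with equality only at $u=0$.

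For (b) I would feed this $s$ into Theorem~\ref{thm:smallness}. The one-parameter subgroup $\kappa_s$ acts on $T_0V$ with only positive weights, so the Bia{\l}ynicki--Birula cell of the isolated fixed point, $V^s_0$, which is modelled on the negative-weight subspace of $T_0V$, equals $\{0\}$; since $0\notin\Sigma$, this gives $\Sigma\cap V^s_0=\emptyset$ and $\chi_y(\Sigma\cap V^s_0)=0$. Theorem~\ref{thm:smallness} then yields
\[
\lim_{\xi\to\infty}\left(\left.\frac{f}{g}\right|_{\alpha=\xi^s}\right)=0 .
\]
Meanwhile $g|_{\alpha=\xi^s}=\prod_{j=1}^{n}\bigl(1-\xi^{-\pi_s(w_j)}\bigr)\to 1$ as $\xi\to\infty$ (each factor tends to $1$, so there is no cancellation), hence $\lim_{\xi\to\infty} f(\xi^s)=0$. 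Writing $f(\xi^s)=\sum_I a_I\,\xi^{\pi_s(I)}$, the sum over the exponents $I\in\Ne(f)\cap\Z^r$ with $a_I\neq 0$, the last sentence of the previous paragraph shows $\pi_s(I)<0$ for every such $I\neq 0$, while $I=0$ contributes the constant $a_0$; therefore $f(\xi^s)=a_0+(\text{terms tending to }0)$ and so $a_0=0$, which is (b).

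I expect the only step with real mathematical content to be the identification $V^s_0=\{0\}$ --- that positivity of the $\T$-representation $T_0V$ collapses the attracting set of the fixed point, for a suitable one-parameter subgroup, down to the point itself. Everything after that is routine: the extreme-point argument of the first paragraph, and the elementary asymptotics of rational functions in one variable (together with the two easy verifications that the admissible cone of $s$ meets $\Z^r$ off the exceptional hyperplanes and that $g|_{\alpha=\xi^s}\to 1$ without degeneration).
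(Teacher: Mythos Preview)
Your proof is correct and follows essentially the same route as the paper's: pick $s$ witnessing positivity and lying off the exceptional hyperplanes, observe that then $V^s_0=\{0\}$ so $\Sigma\cap V^s_0=\emptyset$, and invoke Theorem~\ref{thm:smallness} to obtain a vanishing limit. The only difference is cosmetic packaging of the last step: the paper reads off $0\notin\Ne(f(\xi^s))=\pi_s(\Ne(f))$ directly from the limit (using the projection equality~(\ref{eqn:bbb})), while you instead compute $\lim g(\xi^s)=1$, conclude $a_0=0$, and finish with the extreme-point observation that a vertex of $\Ne(g)$ lying in $\Ne(f)$ would have to appear in the support of $f$.
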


\begin{proof}
The containment in $\Ne(\lambda_{-1}(T^*_0V))$ is explicitly stated in Theorem \ref{thm:smallness} (even without the conditions on the action and on $\Sigma$), we need to prove that $0\not\in \Ne(\tauy(\Sigma\subset V){|0})$. We will prove this by showing that 0 is not in a projection of $\Ne(\tauy(\Sigma\subset V){|0})$.

We have $\lambda_{-1}(T^*_0V)=\prod(1-1/w_i)$ where $w_i$ are the (multiplicatively written) weights of the action.
Let $s\in \Z^r$ be such that the $\alpha=\xi^s$ substitution proves that the action is positive, and
\begin{equation}\label{eqn:bbb}
\Ne(\tauy(\Sigma\subset V)|0|_{\alpha=\xi^s }) = \pi_s( \Ne(\tauy(\Sigma\subset V)|0)).
\end{equation}
Such a choice is possible because the $s$ vectors proving positivity is open in the appropriate sense, and the $s$ vectors for which (\ref{eqn:bbb}) fails is a finite union of hyperplanes (c.f. Proposition \ref{prop:crit2}).

We have $\lambda_{-1}(T^*_0V)|_{\alpha=\xi^s}=\prod (1-1/\xi^{k_i})$, where $k_i$ are positive integers. Therefore
\begin{equation}\label{eqn:ize0}\Ne((\lambda_{-1}(T^*_0V))|_{\alpha=\xi^s})=\left[-\sum k_i,0\right] \subset \R_{\leq 0}.
\end{equation}
Note that, since the action through $\kappa_s$ has positive weights, the Bia{\l}ynicki-Birula cell of 0 consists only of 0: $$V^{s}_0=\{0\}\,.$$
By Theorem \ref{thm:smallness}
$$\lim_{\xi\to \infty}\left(\left.\frac{\tauy(\Sigma\subset V){|0}}{\lambda_{-1}(T^*_0V)}\right|_{\alpha=\xi^s}\right)=\;\chi_y(\Sigma\cap V^{s}_0)=0\,,$$
since $\Sigma\cap V^{s}_0=\emptyset$.
Using (\ref{eqn:ize0}) this implies that $0\not\in \Ne(\tauy(\Sigma\subset V)|0|_{\alpha=\xi^s})$, and hence by (\ref{eqn:bbb}) we have $0 \not \in \pi_s( \Ne(\tauy(\Sigma\subset V)|0))$, what we wanted to prove.
\end{proof}

The next two examples illustrate different aspects of Corollary \ref{co:strictinc}.

\begin{example}\rm \label{ex:good}
Consider the {\em positive} action of $(\C^*)^2$ on $\C^2$ given by $(\alpha,\beta)\cdot (x,y)=(\alpha\beta x,\alpha^3\beta^{-2}y)$; for example $s=(1,0)$ verifies that the action is positive. Let $X$ and $Y$ denote the first and the second coordinate axes respectively.
The additivity and normalization properties of $\tauy$ imply
\[
\tauy(\C^2)=(1+\frac{y}{\alpha\beta})(1+\frac{y}{\alpha^3\beta^{-2}}),
\quad
\tauy(X)=(1+\frac{y}{\alpha\beta})(1-\frac{1}{\alpha^3\beta^{-2}}),
\quad
\tauy(Y)=(1-\frac{1}{\alpha\beta})(1+\frac{y}{\alpha^3\beta^{-2}}),
\]
\[
\tauy(\{0\})=(1-\frac{1}{\alpha\beta})(1-\frac{1}{\alpha^3\beta^{-2}}),
\qquad
\tauy(X-\{0\})=\frac{1+y}{\alpha\beta}(1-\frac{1}{\alpha^3\beta^{-2}}),
\]
\[
\tauy(Y-\{0\})=(1-\frac{1}{\alpha\beta})\frac{1+y}{\alpha^3\beta^{-2}},
\qquad
\tauy(\C^2-\{0\})=(y^2-1)\frac{1}{\alpha^4\beta^{-1}}+(y+1)\frac{1}{\alpha\beta}+(y+1)\frac{1}{\alpha^3\beta^{-2}}.
\]
Here the restriction to 0 is an isomorphism and $\tauy$ is the same as $\tauy|0$.
It is instructive to verify that the Newton polytopes of $\tauy(X-\{0\})$, $\tauy(Y-\{0\})$, $\tauy(\C^2-\{0\})$, namely the convex hulls of $
\{(-4,1),(-1,-1)\},
\{(-4,1),(-3,2)\},
\{(-4,1),(-1,-1),(-3,2)\}
$
are contained in $\Ne(\lambda_{-1}({\C^2}^*))\setminus \{0\}=\Ne(\tauy(\{0\}))\setminus\{0\}=\ch(\{(-4,1),(-1,-1),(-3,2),(0,0)\}\setminus\{0\}$,
as illustrated in Figure 1.
\end{example}

\begin{figure}[h!]
\centering
\begin{tikzpicture}[scale=.7]
\coordinate (A) at (0,0);
\coordinate (B) at (-3,2);
\coordinate (C) at (-4,1);
\coordinate (D) at (-1,-1);
\coordinate (E) at (2,0);
\coordinate (F) at (0,2);
\coordinate (G) at (-5,0);
\coordinate (H) at (0,-2);
\draw[->] [black, thin] (A) -- (E);
\draw[->] [black, thin] (A) -- (F);
\draw [black, thin] (A) -- (G);
\draw [black, thin] (A) -- (H);
\draw (5.5,0) node [left] {$\alpha$-exponent};
\draw (3.5,2) node [left] {$\beta$-exponent};
\draw (-3.7,.5) node [left] {$(-4,1)$};
\draw (-2,2.4) node [left] {$(-3,2)$};
\draw (-.1,-1.5) node [left] {$(-1,-1)$};
    \filldraw[dashed,
        draw=gray,
        fill=gray!20,
    ]          (D)
            -- (A)
            -- (B)
            -- (C);
\filldraw[draw=gray,
          fill=green!20,
    ]          (D)
            -- (B)
            -- (C);
\draw [blue, very thick] (C) -- (D);
\draw [red, very thick] (B) -- (C);
\end{tikzpicture}
\caption{Newton polytopes of Example \ref{ex:good}}
\end{figure}

\begin{example}\rm \label{ex:bad}
If the action of $\T$ is not positive, then the inclusion of Corollary \ref{co:strictinc} may not hold.
Consider the action of $\C^*$ on $\C^2$ given by $\alpha\cdot (x,y)=(\alpha x,\alpha^{-1}y)$.
We have
\[
\tauy(\C^2-\{0\})=
(1+\frac{y}{\alpha})(1+\frac{y}{\alpha^{-1}})-
(1-\frac{1}{\alpha})(1-\frac{1}{\alpha^{-1}})=
(y+1)\frac{1}{\alpha}+(y^2-1)+(y+1)\alpha.
\]
Hence, its Newton polytope is $[-1,1]$, which is not
contained in $\Ne(\lambda_{-1}(\C^{2*}))\setminus\{0\}=\Ne( (1-\frac{1}{\alpha})(1-\frac{1}{\alpha^{-1}}) )\setminus\{0\}=
[-1,1]\setminus\{0\}$. Note that here $\C^2 - \{0\}$ is an orbit of the standard $SL_2(\C)$ action.
\end{example}

\section{Axiomatic characterization of motivic Chern classes}

\subsection{Notations}

Let a linear group $G$ act on the smooth\quapr variety $V$.
Let $\Omega\subset V$ be an orbit, and let $x_\Omega\in \Omega$. We consider the stabilizer subgroup $G_{x_\Omega}$.
We will use the shorthand notation $G_\Omega=G_{x_\Omega}$. This subgroup is defined up to conjugation. We have
\[
K^G(\Omega)\simeq K^G(G/G_\Omega)\simeq K^{G_\Omega}(x_\Omega).
\]
Assume in addition, that the stabilizer $G_\Omega$ is connected, then
$$K^{G_\Omega}(x_\Omega)\simeq \Rep(L_\Omega)$$
 where $L_\Omega$ is the Levi group of $G_\Omega$, see \cite[\S5.2.18]{ChrissGinz}. We recall that by K-theory we mean the equivariant K-theory of locally free sheaves, which in this case (the base is smooth\quapr) coincides with the K-theory of coherent sheaves. Moreover since the representation ring of the reductive group is isomorphic to the representation ring of the maximal compact subgroup $L_{\Omega,c}$, we have
 \begin{equation}\label{comparison}K^G(\Omega)\simeq \Rep(L_{\Omega,c})\simeq K^{G_c}_{top}(\Omega)\,,\end{equation}
where $K^{G_c}_{top}$ is the topological K-theory defined for compact group actions by Segal, \cite{Seg}. Here $G_c\subset G$ is the maximal compact subgroup. It is shown in the Appendix that $K^G(V)\simeq K^{G_c}_{top}(V)$, where $V$ is a smooth \quapr variety on which $G$ acts with finitely many orbits without assumption about positivity and connected stabilizers.
\bigskip

Let  $\T_{\Omega}$ be the maximal torus in $G_{\Omega}$. We may assume that $\T_{\Omega}\subset \T$.
For a connected linear group $G$ the restriction homomorphism to the maximal torus $\T$ is an embedding and we identify $\Rep(G)$ with the invariants $\Rep(\T)^{W}$, where  $W$ is the Weyl group.
Let us introduce the maps
$$\phi_\Omega:K^G(V)\to \Rep(\T_\Omega)$$
by the composition of the restriction homomorphisms and natural isomorphisms
$$K^G(V)\to K^G(\Omega)\simeq K^{G_\Omega}(\pt)\to K^{\T_\Omega}(\pt)\simeq \Rep(\T_\Omega)\,.$$
The homomorphism $\phi_\Omega$ does not depend on the choices made.

The group $G_\Omega$ acts on the tangent space $T_{x_\Omega}\Omega$.
Consider a germ of a {\em normal slice} $S_\Omega$ of $\Omega$ at $x_\Omega$, that is a germ of a $\T_\Omega$-invariant smooth
subvariety of $V$ transverse to $\Omega$ at $x_\Omega$ of complementary dimension.
To construct a slice one can assume by \cite{Sum} that $V$ is a subvariety of a projective space $\P^N$ with a linear action of $\T_\Omega$. Further we can assume that $x_\Omega$ lies in an invariant affine chart of $\P^N$,  moreover that $x_\Omega=0\in \C^N$. There exists a linear subspace in $W\subset\C^N$, which is transverse to $V$ at $0$ and is $\T_\Omega$-invariant. That is so, since we can find a decomposition $\C^N=T_0\Omega\oplus W$ equivariantly with respect to the action of  $\T_\Omega$. The slice $S_\Omega$ at $x_\Omega=0$ is defined as a germ of $V\cap W$. The tangent space $T_{x_\Omega}S_\Omega$ will be denoted by $\nu(\Omega)\simeq T_0V/T_0\Omega$ and called the normal space to the orbit. As a representation of $\T_\Omega$ it does not depend on the choices made.

\subsection{The axiomatic characterization theorems}

We recall that for a representation $W$ of the torus $\T_\Omega$ we set
$$\lambda_y(W)=\sum_{k=0}^{\dim W} [\Lambda^kW] y^k\in \Rep(\T_\Omega)[y]$$ and hence $$\lambda_{-1}(W)=\sum_{k=0}^{\dim W} (-1)^k [\Lambda^kW]\in \Rep(\T_\Omega)\,.$$

\begin{lemma} \label{le:decomposition}
Let $\Theta, \Omega$ be orbits.
We have
\begin{equation}\label{eq:tauslice}
\phi_\Theta(\tauy(\Omega\subset V))=
\lambda_y(
T_{x_\Theta}(\Theta)^*)\cdot\tauy(\Omega\cap S_\Theta\subset S_\Theta){|\{x_\Theta\}}\in
\Rep(\T_\Theta)[y].
\end{equation}
\end{lemma}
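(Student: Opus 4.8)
The plan is to deduce the formula from the multiplicativity of the motivic Chern class under external products, together with a $\T_\Theta$-equivariant local product (slice) decomposition of $V$ near $x_\Theta$. First I would pass from $G$ to the torus $\T_\Theta$: a proper normal crossing extension of $\Omega\hookrightarrow V$ in the sense of Definition~\ref{def:pnce} for the $G$-action is simultaneously one for the restricted $\T_\Theta$-action, so formula~\eqref{alternating-definition} shows that $\tauy^{\T_\Theta}(\Omega\subset V)$ is the image of $\tauy^G(\Omega\subset V)$ under $K^G(V)\to K^{\T_\Theta}(V)$. Unwinding the definition of $\phi_\Theta$, the identifications $K^G(\Theta)\simeq K^{\T_\Theta}(x_\Theta)\simeq\Rep(\T_\Theta)$ then turn the left-hand side of \eqref{eq:tauslice} into the restriction of $\tauy^{\T_\Theta}(\Omega\subset V)$ to the $\T_\Theta$-fixed point $x_\Theta$. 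From now on I work $\T_\Theta$-equivariantly and suppress the superscript.

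Next I would produce the local product structure. Applying the slice theorem for the reductive group $\T_\Theta$ acting near its fixed point $x_\Theta$ --- concretely, choosing a $\T_\Theta$-stable complement $\mathfrak m$ to $\mathrm{Lie}(G_\Theta)$ in $\mathrm{Lie}(G)$ and using the conjugation-equivariant exponential to build a $\T_\Theta$-equivariant local section of the principal bundle $G\to\Theta=G/G_\Theta$ through $x_\Theta$, combined with \cite{Sum} --- one gets an \'etale (or analytic) $\T_\Theta$-invariant neighbourhood $U$ of $x_\Theta$, a $\T_\Theta$-invariant neighbourhood $U_\Theta=U\cap\Theta$ of $x_\Theta$ in $\Theta$, and a $\T_\Theta$-equivariant isomorphism $U\cong U_\Theta\times S_\Theta$ carrying $x_\Theta$ to $(x_\Theta,x_\Theta)$. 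After shrinking so that $\Theta\cap S_\Theta=\{x_\Theta\}$, the fact that the section has values in $G$ and $\Omega$ is $G$-invariant forces a point $(p,s)$ of $U_\Theta\times S_\Theta$ to lie in $\Omega$ exactly when $s\in\Omega\cap S_\Theta$; hence the isomorphism identifies $\Omega\cap U$ with $U_\Theta\times(\Omega\cap S_\Theta)$. Since the motivic Chern transformation is compatible with \'etale base change --- pull back a proper normal crossing extension, which stays one, and apply flat base change to the push-forwards in \eqref{alternating-definition} --- the localness property (3) yields
\[
\tauy(\Omega\subset V)\big|_{x_\Theta}=\tauy\big(U_\Theta\times(\Omega\cap S_\Theta)\subset U_\Theta\times S_\Theta\big)\big|_{(x_\Theta,x_\Theta)}.
\]

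Now I would invoke multiplicativity: $\tauy([A\to M]\boxtimes[B\to N])=\tauy(A\to M)\boxtimes\tauy(B\to N)$. Non-equivariantly this is part of the formalism of \cite{BSY}; equivariantly it follows at once from \eqref{alternating-definition}, because the product of two proper normal crossing extensions is again one (its boundary strata being the products $D_I\times D'_J$), while both $\lambda_y$ --- using $T^*(D_I\times D'_J)=T^*D_I\boxplus T^*D'_J$ --- and proper push-forward are multiplicative under products (a possibly singular factor is reduced to smooth pieces by additivity). Applying this with $[A\to M]=[\id_{U_\Theta}]$ and $[B\to N]=[\Omega\cap S_\Theta\subset S_\Theta]$, and using the normalization $\tauy(\id_{U_\Theta})=\lambda_y(T^*U_\Theta)$ of the smooth variety $U_\Theta$, the displayed right-hand side becomes
\[
\big(\lambda_y(T^*U_\Theta)\boxtimes\tauy(\Omega\cap S_\Theta\subset S_\Theta)\big)\big|_{(x_\Theta,x_\Theta)}=\lambda_y\big(T^*_{x_\Theta}U_\Theta\big)\cdot\tauy(\Omega\cap S_\Theta\subset S_\Theta)\big|_{x_\Theta}.
\]
Since $U_\Theta$ is open in $\Theta$, $T_{x_\Theta}U_\Theta=T_{x_\Theta}\Theta$, and this is precisely \eqref{eq:tauslice} in $\Rep(\T_\Theta)[y]$; the independence of the auxiliary choices (the slice $S_\Theta$, the neighbourhood $U$, the section) is guaranteed by the well-definedness discussions preceding the lemma together with property~(3).

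I expect the crux to be the local product step: realizing the slice decomposition $\T_\Theta$-equivariantly (equivariance of the section, and enough shrinking that $\Omega\cap U$ is \emph{exactly} the product $U_\Theta\times(\Omega\cap S_\Theta)$ rather than merely formally so), and making sure the localness/base-change property of $\tauy$ is available in whatever topology (Zariski, \'etale, or analytic) the decomposition is obtained. The external-product multiplicativity, by contrast, is routine from the normal-crossings definition.
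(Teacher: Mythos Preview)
Your argument is correct but follows a different route from the paper. The paper does \emph{not} construct an explicit product neighbourhood $U_\Theta\times S_\Theta$; instead it fixes once and for all a $G$-invariant (Bierstone--Milman) proper normal crossing extension $f:X\to V$ of $\Omega\hookrightarrow V$, observes that $G$-invariance forces each $f|_{D_I}$ to be transverse to the slice $S_\Theta$ (because $f(D_I)$ is a union of $G$-orbits and $S_\Theta$ is transverse to all nearby orbits), and deduces that $f^{-1}(S_\Theta)$ furnishes a proper normal crossing extension for $\Omega\cap S_\Theta\subset S_\Theta$. The factor $\lambda_y(T^*_{x_\Theta}\Theta)$ then drops out of a one-line projection formula for $f_*\lambda_y(T^*D_I)|_{x_\Theta}$, using transversality to split $T^*D_I$ along $D_I\cap f^{-1}(S_\Theta)$. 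Your approach via an equivariant product decomposition and external multiplicativity is conceptually cleaner --- it isolates the mechanism as a Verdier-type smooth base change, which the paper's own remark after the proof also recognises --- but the paper's route has the advantage of staying entirely in the Zariski topology, so it never needs the \'etale or analytic compatibility of $\tauy$ that you rightly flag as the delicate point. That compatibility can indeed be established (pull back the resolution along the \'etale slice map and invoke flat base change for the push-forwards in \eqref{alternating-definition}), but it is genuine extra work not supplied by property~(3) as stated; if you want to avoid it, you can imitate the paper and trade the product structure for transversality of a $G$-invariant resolution.
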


\begin{proof}  Suppose $S_\Theta$ is a transverse slice to the orbit $\Theta$ at the point $x=x_\Omega\in\Theta$. Let the map $f:X\to V$ be a resolution of the pair $(\overline\Omega,\partial \Omega)$, where $\partial \Omega=\overline\Omega\setminus\Omega$, satisfying the condition of \S\ref{tau-definicja}, i.e.
$f^{-1}(\partial \Omega)=D=\bigcup_{i=1}^sD_i$ is a simple divisor with normal crossings.
Moreover assume that the resolution  is invariant with respect to automorphisms of $V$. In particular it is invariant with respect to the action of the group $G$.
Such a resolution is obtained by application of the Bierston-Milman \cite{BiMi} algorithm.
The slice $S_\Theta$ is automatically transverse to each orbit in a neighborhood of $x$.
For each $I\subset\underline{s}$ the image of the intersection $D_I=\bigcap_{i\in I}D_i$ is a union of orbits.
Precisely: if $y\in f(D_I)$, then $g\cdot y\in f(D_I)$ for every $g\in G$ since the action of $G$ lifts to $X$ preserving $D_I$. Moreover it follows that
the map $f$ restricted to $D_I$ is transverse to $S_\Theta$ (after possible shrinking of $S_\Theta$).
 Therefore $f^{-1}(S_\Theta)\cap D$ is a simple normal crossing divisor in $f^{-1}(S_\Theta)$ and $f$ restricted to ${f^{-1}(S_\Theta)}$ can be used to compute $\tauy(\Omega\cap S_\Theta\subset S_\Theta)$. After restriction to the point $x=x_\Theta$, for each $I\subset\underline{s}$ we have the projection formula
$$\phi_\Theta(f_*\lambda_y(T^*{D_I})){|x}=
\lambda_y(T^*_x\Theta)\cdot f_*\lambda_y(T^*{(D_I\cap f^{-1}(S_\Theta))}){|x}\in
\Rep(\T_\Theta)[y].
$$
Using the formula (\ref{alternating-definition}) of \S\ref{tau-definicja} we obtain the claim.\end{proof}

\begin{remark} The restriction formula holds in a more general situation, not necessarily
when we deal with a space with a group action, stratified by orbits.
It is a version of Verdier-Riemann-Roch. For
CSM-classes an analogous formula was discussed by Yokura who gave a proof for
smooth morphism \cite[Thm. 2.2]{YoVRR}. His argument applied to the natural
map $G\times^{\T_\Theta}S_\Theta\to V$ is valid for equivariant motivic
Chern classes as well. The proof of Verdier-Riemann-Roch by Sch\"urmann \cite[Cor
0.1]{Sch} also works in equivariant K-theory,  provided that we apply the
specialization functor in the category of
equivariant mixed Hodge modules. Another proof for CSM-classes was given
by Ohmoto \cite[Prop. 3.8]{O2}\footnote{In an earlier version of Ohmoto's paper (arXiv:1309.0661v1) the proof was attained is a similar way as ours.}.
Nevertheless  the
exact statement and a proof for equivariant K-theory is not  accessible in the literature,
therefore we gave it here taking advantage of the group action.
 \end{remark}

\begin{theorem} \label{thm:intchar}
Let $V$ be a smooth variety (not necessarily complete) on which a linear algebraic group $G$ acts with finitely many orbits.
Let us assume that
\begin{enumerate}[(*)]
\item
for all $\Omega$ the action of $\T_\Omega$ on $\nu(\Omega)$ is positive (see Definition \ref{def:positive}).
\end{enumerate}
Then the following properties hold
\begin{enumerate}[(i)]
\item \label{iprinc}
$\phi_\Omega(\tauy(\Omega\subset V))=\lambda_y(T^*_{x_\Omega}\Omega)\cdot\lambda_{-1}(\nu(\Omega)^*) \in \Rep(\T_\Omega)[y]$,
\item \label{idivis}
$\phi_\Theta(\tauy(\Omega\subset V))$ is divisible by $\lambda_y(T(\Theta)^*)$ in $\Rep(\T_\Theta)[y]$,
\item \label{idegree}
if $\Theta\not= \Omega$ then $\Ne(\phi_\Theta(\tauy(\Omega\subset V))/\lambda_y(T(\Theta)^*)\subset \Ne(\lambda_{-1}(\nu(\Theta)^*))\setminus\{0\}$.
\end{enumerate}
\end{theorem}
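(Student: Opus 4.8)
The plan is to derive all three statements from the slice decomposition formula of Lemma \ref{le:decomposition}, combined with the fundamental local computation \eqref{eqn:key} and the Newton-polytope estimate of Corollary \ref{co:strictinc}. First I would prove \eqref{iprinc}, which is the case $\Theta=\Omega$. Here $\Omega\cap S_\Omega$ is just the single point $x_\Omega$ inside the smooth slice $S_\Omega$, so by Lemma \ref{le:decomposition} we get $\phi_\Omega(\tauy(\Omega\subset V))=\lambda_y(T^*_{x_\Omega}\Omega)\cdot\tauy(\{x_\Omega\}\subset S_\Omega)|_{\{x_\Omega\}}$. Since $\dim S_\Omega=\dim\nu(\Omega)$ and the action of $\T_\Omega$ on $T_{x_\Omega}S_\Omega=\nu(\Omega)$ is positive (so $x_\Omega$ is an isolated fixed point), the factorization of $\tauy(\{\text{point}\}\subset(\text{vector space}))$ obtained by multiplying together the one-dimensional computations in \eqref{eqn:key} gives $\tauy(\{x_\Omega\}\subset S_\Omega)|_{\{x_\Omega\}}=\prod_i(1-1/w_i)=\lambda_{-1}(\nu(\Omega)^*)$, where $w_i$ are the weights of $\T_\Omega$ on $\nu(\Omega)$; this yields \eqref{iprinc}. (One should note that $\tauy(\{x_\Omega\}\subset S_\Omega)$ depends only on a formal neighborhood, so replacing the germ $S_\Omega$ by its tangent space $\nu(\Omega)$ is legitimate.)

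For \eqref{idivis}, I would apply Lemma \ref{le:decomposition} with general $\Theta$: it says directly that $\phi_\Theta(\tauy(\Omega\subset V))=\lambda_y(T_{x_\Theta}(\Theta)^*)\cdot\tauy(\Omega\cap S_\Theta\subset S_\Theta)|_{\{x_\Theta\}}$, and the second factor lies in $\Rep(\T_\Theta)[y]$, so the product is visibly divisible by $\lambda_y(T(\Theta)^*)$ in $\Rep(\T_\Theta)[y]$. The quotient is exactly $\tauy(\Omega\cap S_\Theta\subset S_\Theta)|_{\{x_\Theta\}}$.

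For \eqref{idegree}, assume $\Theta\neq\Omega$. Then $\Omega\cap S_\Theta$ is an invariant subvariety of the slice $S_\Theta$ which does \emph{not} contain the fixed point $x_\Theta$ (since $x_\Theta\in\Theta$ and $\Theta\cap\Omega=\emptyset$). Passing to the tangent-space model, $\nu(\Theta)$ is a $\T_\Theta$-representation which by hypothesis (*) is positive, $x_\Theta\mapsto 0$ is an isolated fixed point, and $\Sigma:=\Omega\cap S_\Theta$ (identified with a locally closed invariant subvariety of $\nu(\Theta)$ not containing $0$) satisfies the hypotheses of Corollary \ref{co:strictinc}. Note $\lambda_{-1}(T_0^*\nu(\Theta))=\lambda_{-1}(\nu(\Theta)^*)$. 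Hence Corollary \ref{co:strictinc} gives
\[
\Ne\bigl(\tauy(\Omega\cap S_\Theta\subset S_\Theta)|_{\{x_\Theta\}}\bigr)\subset\Ne(\lambda_{-1}(\nu(\Theta)^*))\setminus\{0\},
\]
and by the identification in \eqref{idivis} the left-hand side is $\Ne(\phi_\Theta(\tauy(\Omega\subset V))/\lambda_y(T(\Theta)^*))$, which is exactly the claim.

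The main obstacle, and the only genuinely delicate point, is making sure that Lemma \ref{le:decomposition} and Corollary \ref{co:strictinc} can both be applied to the \emph{germ} of the slice $S_\Theta$ rather than to a global smooth variety: Corollary \ref{co:strictinc} is stated for a smooth quasi-projective $V$ with an isolated torus fixed point, so one must argue that the local invariant $\tauy(\Omega\cap S_\Theta\subset S_\Theta)|_{\{x_\Theta\}}$ is unchanged when $S_\Theta$ is replaced by a genuine $\T_\Theta$-variety (e.g. the vector space $\nu(\Theta)$ itself, or an affine chart of the form $G\times^{\T_\Theta}S_\Theta$), using the Localness property of $\tauy$ together with the fact that restriction to $\{x_\Theta\}$ only sees a neighborhood. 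Once this localization issue is handled, the three assertions follow formally from the cited results; in particular no new Bia{\l}ynicki-Birula analysis beyond what is packaged in Theorem \ref{thm:smallness} and Corollary \ref{co:strictinc} is needed.
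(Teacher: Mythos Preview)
Your proposal is correct and matches the paper's proof: parts (\ref{idivis}) and (\ref{idegree}) follow from Lemma~\ref{le:decomposition} and Corollary~\ref{co:strictinc} exactly as you outline, and for (\ref{iprinc}) the paper uses the slightly more direct observation that restricting $i_*\lambda_y(T^*\Omega)$ back to $\Omega$ multiplies by the K-theoretic Euler class $\lambda_{-1}(\nu(\Omega)^*)$ (the self-intersection formula), rather than going through the slice lemma. Your stated obstacle is not a genuine one: by the construction preceding Lemma~\ref{le:decomposition} the slice $S_\Theta=V\cap W$ is an honest smooth $\T_\Theta$-variety (not merely a formal germ), and Corollary~\ref{co:strictinc} applies to it directly with ambient space $S_\Theta$ and isolated fixed point $x_\Theta$---no identification of $S_\Theta$ with the linear space $\nu(\Theta)$ is needed, only the equality $T_{x_\Theta}S_\Theta=\nu(\Theta)$ of $\T_\Theta$-representations.
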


\begin{remark} \rm
Observe that using (\ref{eqn:CL}), (\ref{eqn:Nprop}), and the fact that $0\in \Ne(\lambda_y(T(\Theta)^*))$ condition (\ref{idegree}) is equivalent to
\begin{enumerate}[(i)]
\setcounter{enumi}{3}
\item {\em  if $\Theta\not= \Omega$ then $\Ne(\phi_\Theta(\tauy(\Omega\subset V)))\subset    \Ne(\lambda_y(T(\Theta)^*)\lambda_{-1}(\nu(\Theta)^*))\setminus\{0\}$,}
\end{enumerate}
or, what is the same, to
\begin{enumerate}[(i)]
\setcounter{enumi}{4}
\item {\em  if $\Theta\not= \Omega$ then $\Ne(\phi_\Theta(\tauy(\Omega\subset V)))\subset
\Ne(\phi_\Theta(\tauy(\Theta\subset V)))\setminus\{0\}$.}
\end{enumerate}
\end{remark}

\begin{proof}
{\noindent  (i)~~} Since $\Omega$ is smooth
\[
\tauy(\id_\Omega))=\lambda_y(T_{x_\omega}^*\Omega)\in K^G(\Omega)[y].
\]
The composition of inclusion into $V$ and restriction introduces the factor $\lambda_{-1}(\nu(\Omega)^*)$, the K-theoretic Euler class of $\nu(\Omega)$.

{\noindent  (ii)~~} Follows from Lemma \ref{le:decomposition}.

{\noindent  (iii)~~} By Corollary \ref{co:strictinc} we have
\begin{equation}\label{eqn:temp5}
\Ne(\tauy(\Omega\cap S_\Theta\subset S_\Theta){|x_\Theta})\subset\Ne(\lambda_{-1}(\nu(\Theta)^*))\setminus\{0\}.
\end{equation}
Applying the multiplicative property (\ref{eqn:Nprop}) of Newton polytopes and formula (\ref{eq:tauslice}) we obtain the claim.
\end{proof}

\begin{theorem} \label{thm:axiomatic}
Suppose that $V$ is a smooth algebraic $G$-variety which is the union of finitely many orbits. Assume that the stabilizers of the orbits are connected and the action satisfies the positivity condition (*).
Then the properties (\ref{iprinc})-(\ref{idegree}) of Theorem \ref{thm:intchar} determine $\tauy(\Omega\subset V)\in K^G(V)[y]$.
\end{theorem}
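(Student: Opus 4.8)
The plan is to prove \emph{uniqueness}, the existence (i.e. that $\tauy(\Omega\subset V)$ satisfies (\ref{iprinc})--(\ref{idegree})) being Theorem~\ref{thm:intchar}: if $c,c'\in K^G(V)[y]$ both satisfy (\ref{iprinc})--(\ref{idegree}), I must show $d:=c-c'=0$. First I would record what these properties say about $d$. By (\ref{idivis}), for every orbit $\Theta$ one may write $\phi_\Theta(d)=\lambda_y(T(\Theta)^*)\cdot q_\Theta$ with $q_\Theta\in\Rep(\T_\Theta)[y]$; by (\ref{iprinc}) the values $\phi_\Omega(c)$ and $\phi_\Omega(c')$ coincide, so $q_\Omega=0$; and for $\Theta\neq\Omega$ I claim $\Ne(q_\Theta)\subset\Ne(\lambda_{-1}(\nu(\Theta)^*))\setminus\{0\}$. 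The subtlety here is that (\ref{idegree}) bounds the quotient Newton polytopes of $c$ and of $c'$ \emph{separately}; to pass to their difference I would use that $0$ is a \emph{vertex} of $\Ne(\lambda_{-1}(\nu(\Theta)^*))$, which is exactly where positivity (*) enters. Indeed, with $\lambda_{-1}(\nu(\Theta)^*)=\prod_i(1-1/w_i)$ and weights $\mu_i\neq 0$, the Newton polytope is the zonotope $\sum_i\ch\{0,-\mu_i\}$, and a vector $s$ witnessing positivity makes $\pi_s$ attain its maximum over this zonotope only at $0$; since each of the two summands of $q_\Theta$ has vanishing $\alpha^0$-coefficient and $\Ne(q_\Theta)$ lies in the (convex) zonotope by \eqref{eqn:Nprop}, the claimed estimate follows.

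Next I would peel off $V$ one orbit at a time. Order the orbits $\Omega_1,\dots,\Omega_N$ so that $\overline{\Omega_i}\subset\Omega_1\cup\cdots\cup\Omega_i$, and set $V_j:=\Omega_j\cup\cdots\cup\Omega_N$; then $V_j$ is open in $V$, hence smooth, $\Omega_j$ is closed in $V_j$, and $V_{j+1}=V_j\setminus\Omega_j$. I would prove $d|_{V_j}=0$ for all $j$ by descending induction, the case $V_{N+1}=\emptyset$ being trivial. Assuming $d|_{V_{j+1}}=0$, the equivariant localization exact sequence on the smooth variety $V_j$,
\[
K^G(\Omega_j)[y]\xrightarrow{\ \iota_{j*}\ }K^G(V_j)[y]\longrightarrow K^G(V_{j+1})[y]\longrightarrow 0
\]
(see \cite{Tho}, \cite[Ch.~5]{ChrissGinz}; $K^G$ of locally free and of coherent sheaves agree on the smooth $V_j$), shows $d|_{V_j}=\iota_{j*}(e)$ for some $e\in K^G(\Omega_j)[y]\simeq\Rep(L_{\Omega_j})[y]$. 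The K-theoretic self-intersection formula for the regular closed embedding $\iota_j\colon\Omega_j\hookrightarrow V_j$ gives $\iota_j^*\iota_{j*}(e)=e\cdot\lambda_{-1}(N_{\Omega_j/V_j}^*)$, and restriction to $x_{\Omega_j}$---where $N_{\Omega_j/V_j}$ is the $\T_{\Omega_j}$-representation $\nu(\Omega_j)$---yields $\phi_{\Omega_j}(d)=\bar e\cdot\lambda_{-1}(\nu(\Omega_j)^*)$ with $\bar e\in\Rep(\T_{\Omega_j})[y]$ the image of $e$.

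Combining this with $\phi_{\Omega_j}(d)=\lambda_y(T(\Omega_j)^*)\cdot q_{\Omega_j}$, I would argue in the unique factorization domain $\Rep(\T_{\Omega_j})[y]$. The factors $\lambda_{-1}(\nu(\Omega_j)^*)$ and $\lambda_y(T(\Omega_j)^*)$ are coprime there---no irreducible factor of the former involves $y$, while every irreducible factor of the latter does---so $\lambda_{-1}(\nu(\Omega_j)^*)\mid q_{\Omega_j}$, say $q_{\Omega_j}=\lambda_{-1}(\nu(\Omega_j)^*)\cdot r$. Then $\Ne(\lambda_{-1}(\nu(\Omega_j)^*))+\Ne(r)=\Ne(q_{\Omega_j})\subset\Ne(\lambda_{-1}(\nu(\Omega_j)^*))$, so the Cancellation Law \eqref{eqn:CL} forces $\Ne(r)\subset\{0\}$; were $r\neq 0$, its constant term would make $0\in\Ne(q_{\Omega_j})$, contradicting $\Ne(q_{\Omega_j})\subset\Ne(\lambda_{-1}(\nu(\Omega_j)^*))\setminus\{0\}$. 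Hence $q_{\Omega_j}=0$, so $\bar e\cdot\lambda_{-1}(\nu(\Omega_j)^*)=0$; positivity (*) makes $\lambda_{-1}(\nu(\Omega_j)^*)$ a nonzero element of the domain $\Rep(\T_{\Omega_j})[y]$, hence $\bar e=0$, and then $e=0$ because $G_{\Omega_j}$ is connected, so $\Rep(L_{\Omega_j})\hookrightarrow\Rep(\T_{\Omega_j})$ is injective. Thus $d|_{V_j}=\iota_{j*}(0)=0$, and for $j=1$ this gives $d=0$.

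The step I expect to be the crux is the last one: converting the \emph{two} independent divisibilities of $\phi_{\Omega_j}(d)$---by $\lambda_y(T(\Theta)^*)$ from axiom (\ref{idivis}), and by $\lambda_{-1}(\nu(\Theta)^*)$ from the localization sequence---into a vanishing statement, i.e. the precise interplay of coprimality in the UFD with the Newton-polytope bound (\ref{idegree}). More routine, but still requiring care, is justifying equivariantly, for the possibly non-projective $V_j$, the localization triangle and the self-intersection formula, and identifying $N_{\Omega_j/V_j}|_{x_{\Omega_j}}$ with $\nu(\Omega_j)$. The standing hypotheses enter as follows: smoothness of $V$ for the coincidence of the two K-theories and for the localization sequence, finiteness of the orbit set for the finite filtration $V=V_1\supset\cdots\supset V_{N+1}=\emptyset$, connectedness of the stabilizers for the injectivity of $\Rep(L_\Omega)\hookrightarrow\Rep(\T_\Omega)$, and positivity (*) both to keep $\lambda_{-1}(\nu(\Theta)^*)$ nonzero and to make $0$ a vertex of $\Ne(\lambda_{-1}(\nu(\Theta)^*))$.
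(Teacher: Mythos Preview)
Your proposal is correct and follows essentially the same route as the paper's proof: form the difference $d$, deduce from (i)--(iii) that $\phi_\Theta(d)$ is divisible by $\lambda_y(T(\Theta)^*)$ with quotient missing the vertex $0$ of $\Ne(\lambda_{-1}(\nu(\Theta)^*))$, then peel off orbits via the localization exact sequence, using the self-intersection formula to gain the second divisibility by $\lambda_{-1}(\nu(\Theta)^*)$, and finish by coprimality in the UFD $\Rep(\T_\Theta)[y]$ together with the Newton-polytope constraint. The paper phrases the final contradiction as ``divisible by the product yet properly contained in its Newton polytope'' while you unwind it via the cancellation law to force $\Ne(r)\subset\{0\}$, but this is the same argument; your identification of where each hypothesis is used is also accurate.
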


Our proof is an adaptation of the arguments in \cite[Prop. 9.2.2]{O}, \cite[Section 3.2]{RTV2}.

\begin{proof}Let us fix a linear order of orbits such that
$$V_{\succ\Theta}=\bigsqcup_{\Omega\succ\Theta}\Theta$$ is an open set in $V$. Let $V_{\succeq\Theta}=\bigsqcup_{\Omega\succeq\Theta}\Theta$.
Consider
the long exact sequences for the pairs $(V_{\succeq\Theta},V_{\succ\Theta})$.
Since the stabilizers of the orbits are connected we have
$$K^G(\Theta)=K^{G_\Theta}(\pt)=\Rep(\T_\Theta)^{W_\Theta}\,.$$
Let $\iota_\Theta:\Theta\to V_{\succeq\Theta}$ be the inclusion.
The map $(\iota_\Theta)_*$ composed with $\iota_\Theta^*$ is the multiplication by $\lambda_{-1}(\nu^*_\Theta)$, which is nonzero element by the positivity assumption. Since
$\Rep(\T_\Theta)^{W_\Theta}$ is a domain, the map $(\iota_\Theta)_*$  is injective:
\begin{equation}\label{eq:sequence}
\xymatrix{0\ar[r]&K^G(\Theta)\ar[r]^{(\iota_\Theta)_*} &K^G(V_{\succeq\Theta})\ar[r]&K^G(V_{\succ\Theta})\ar[r]&0}\\
\end{equation}
Another argument showing exactness in a much more general situation is given in the Appendix, the proof of Theorem \ref{K-LES}.
\medskip

Suppose that $\tau(\Omega)$ satisfies the conditions {(i)-(iii)}. Then the difference $\delta=\tau(\Omega)-\tauy(\Omega)$ satisfies
\begin{enumerate}[(i')]
\item \label{iprinc'}
$\phi_\Omega(\delta)=0 \in \Rep(\T_\Omega)[y]$,
\item \label{idivis'}
$\phi_\Theta(\delta)$ is divisible by $\lambda_y(T^*\Theta)$ in $\Rep(\T_\Theta)[y]$
\item \label{idegree'}
$\Ne(\delta)\subsetneq \Ne( \lambda_y(T^*\Theta)\cdot\lambda_{-1}(\nu^*_\Theta))$ for all orbits, including $\Theta=\Omega$.
\end{enumerate}
The condition (iii') holds because under the positivity assumption (*) the point 0 is a vertex of $\lambda_{-1}(\nu^*_\Theta)$, hence it does not belong to
$$\Ne(\phi_\Theta(\delta)/\lambda_y(T^*\Theta))\subset conv.hull(\Ne(\phi_\Theta(\tau))/\lambda_y(T(\Theta)^*))\cup\Ne(\phi_\Theta(\tauy(\Omega)))/\lambda_y(T(\Theta)^*)))\,.$$
We argue by the induction on  orbits that
\begin{equation}\label{eq:zero}\delta|V_{\succeq \Theta}=0.\end{equation}
If $\Theta$ is the open orbit, then $V_{\succeq\Theta}=\Theta$ and $\lambda_{-1}\nu(\Theta)=1$. The property (iii') implies that $\Ne(\phi_\Theta(\delta))=\emptyset$, thus $\phi_\Theta(\delta)=0\in \Rep(\T_\Theta)[y]$. Since $G_\Theta$ is connected the restriction map $\Rep(G_\Theta)\to \Rep(\T_\Theta)$ is injective. Hence $\delta{|\Theta}=0$.

Suppose by the inductive assumption that $\delta|V_{\succ \Theta}=0$. We prove that $\delta|V_{\succeq \Theta}=0$. By the exact sequence (\ref{eq:sequence}) the class $\delta|V_{\succeq \Theta}$ is the image of an element $$\gamma\in K^G(\Theta)[y]\hookrightarrow K^{\T_\Theta}(\pt)[y]\,.$$
We have  $$\iota_{\Theta}^*(\iota_{\Theta})_*(\gamma)=\lambda_{-1}(\nu^*_\Theta)\cdot\gamma\,.$$
On the other hand $\iota_{\Theta}^*(\iota_{\Theta})_*(\gamma)$ is equal to the restriction of $\delta$ to $\Theta$.
By (ii') it is divisible by $\lambda_y(T^*\Theta)$ in $\Rep(\T_\Theta)[y]$.
The Laurent polynomials  $\lambda_y(T^*\Theta)$ and $\lambda_{-1}(\nu^*_\Theta)$  are coprime in $\Rep(\T_\Theta)[y]$. Thus $\phi_\Theta(\delta)$ is divisible by $\lambda_y(T^*\Theta)\cdot\lambda_{-1}(\nu^*_\Theta)$. This contradicts the proper inclusion of Newton polytopes. The only possibility is that $\phi_\Theta(\delta)=0$, and as in the initial step we conclude that $\delta|\Theta=0$. From the exactness of the sequence (\ref{eq:sequence}) it follows that $\delta|V_{\succeq \Theta}=0$.
\end{proof}

\begin{remark} \rm \label{rem:comparison}
The axiomatic characterization of motivic Chern classes in this section is motivated by the axiomatic characterization of certain K theoretic characteristic classes called K theoretic stable envelopes in \cite[Section 9.1]{O}. The similarity of the axiom systems imply certain coincidences that we explain now. We do not attempt to give a detailed setup of the theory of stable envelopes, the explanation we are giving now requires some familiarity with works of Okounkov, see for example \cite{MO1, MO2, OS, AO1, AO2} or \cite{GRTV, RTV1, RTV2, RTV3, RV1, RV2, FRV, SZZ}. The relation between motivic Chern classes and stable envelopes is also studied in \cite{AMSS2}.

Stable envelopes are defined in the following context: on a symplectic complex algebraic variety (usually a Nakajima quiver variety) $Y$   a group $\T \times \C^*$ acts naturally, where $\T$ leaves the symplectic form invariant and $\C^*$ scales it by a character. Stable  envelopes are associated with components of the $\T$ fixed points, and they also depend on an extra parameter, called slope. For a fixed slope parameter the stable envelopes live in $K^{\T\times \C^*}(Y)$. They are defined by three axioms very similar to our axioms for motivic Chern classes: normalization, support, and Newton polytope axioms.

In this paper we are studying $G$ equivariant motivic Chern classes for possibly singular or locally closed invariant subvarieties of an arbitrary smooth $G$-manifold $X$. These classes live in $K^G(X)[y]$.

In certain situations both notions make sense, and the rings they live in can be identified such that the axioms match. Hence in these cases the two notions (after the identifications) are the same. Here are the assumptions to make to guarantee that the two notions match. First, the variety $Y$ needs to be a cotangent space of our smooth variety $X$, such that $G$ leaves $X$ invariant and $\C^*$ acts by multiplication in the fiber. Suppose also the connected and solvable group $G$, with maximal torus $\T$, acts on $X$ with finitely many orbits with one $\T$ fixed point in each orbit, and the induced $\T$ action is positive. Further assume that all stabilizer subgroups are connected. Then stable envelopes and motivic Chern classes both live in
\[
K^{\T\times \C^*}(Y)=K^{\T\times \C^*}(TX)=K^{\T\times \C^*}(X)=K^{\T}(X)[y^{\pm 1}]
=
K^G(X)[y^{\pm 1}],
\]
(see \cite[Sect. 5.2.18]{ChrissGinz} for the last identification). Not only they live in the same ring and both are assigned to the finitely many $\T$ fixed points, but their defining axioms also (almost) match. Namely, the normalization and support axioms of both notions are the same. The Newton polytope axiom of \cite[Section 9.1]{O} requires that the small convex polytope remains inside the larger one even if shifted slightly towards the origin (at least for a specific choice of slope parameter \cite[9.1.9]{O}, called antidominant alcove). This implies that the small polytope is in the large polytope minus the origin, which is our Newton polytope axiom. Since one axiom implies the other one, the two notions they characterize---in the special case we consider, and after the identifications we made---coincide.

This argument implies that---in the situation when both notions are defined---a stronger Newton polytope axiom also holds for the motivic Chern classes. This phenomenon, however, only holds in the special situation when both stable envelopes and motivic Chern classes are defined and identified. For motivic Chern classes of general varieties only the weak Newton polytope containment holds, see e.g. Example \ref{ex:good}.
\end{remark}

\subsection{On the conditions of Theorem \ref{thm:axiomatic}}

Theorem \ref{thm:axiomatic} provides an axiomatic characterization of $\tauy(\Omega\subset V)$ if certain conditions hold. One of the conditions is that the stabilizer subgroups of the orbits are connected. The following examples show that this condition is indeed required.

\begin{example}\rm
Let the torus $\T$ act on $V=\T/H$ where $H$ is a finite subgroup of $\T$. The action has a unique orbit $\Omega$.
We have $K^\T(V)[y]=Rep(H)[y]$. Since the maximal torus $\T_\Omega$ of the stabilizer $G_\Omega$ is trivial, we have $Rep(\T_\Omega)[y]=\Z[y]$, and hence the map $\phi_\Omega$ is the obvious forgetful map $Rep(H)[y]\to \Z[y]$. Since this map is not injective (not even after tensoring with $\Q$!) the uniqueness statement in Theorem \ref{thm:axiomatic} does not hold. On the other hand the values of $\phi_\Omega$ and of the Chern character are determined by (i)-(iii), since  here the target groups do not contain much information.
\end{example}

\begin{example}\rm Let $X=\C$, $G=\T=\C^*$ acting on $\C$ by $t\cdot z=t^nz$.
Let $\Omega=\C\setminus\{0\}$, $\Theta=\{0\}$. We have a short exact sequence of $\Rep(\C^*)$-modules
$$\xymatrixcolsep{3.5pc}\xymatrix{0\ar[r]&K^{\C^*}(\Theta)\ar@{=}[d]
\ar[r]^{(\iota_\Theta)_*
}&K^{\C^*}(\C)\ar@/^2pc/[l]_{\iota_\Theta^*}\ar[r]^{\iota_\Omega^*}&K^{\C^*}(\Omega)\ar@{=}[d]\ar[r]& 0\,.\\
&\Rep(\C^*)&&\Rep(\Z/(n))}$$
We have $\T_\Omega=1$, $\T_\Theta=\T$ and the restriction $\phi_\Theta=\iota_\Theta^*:K^\T(\C)\to K^\T(\Theta)$ is an isomorphism. Naming the generator of $\Rep(\C^*)$ by $\alpha$ we obtain that the composition $\iota_\Theta^*(\iota_\Theta)_*$ is the multiplication  by $1-\alpha^{-n}$, and the sequence
$$\xymatrixcolsep{3.5pc}\xymatrix{0\ar[r]& \Z[\alpha,\alpha^{-1}]~~~\ar[r]^{(1-\alpha^{-n})\cdot}&~~~\Z[\alpha,\alpha^{-1}]\ar[r] &\Z[\alpha]/(\alpha^n-1)\ar[r]& 0\,.}$$
Suppose $\tau\in K^\T(\C)$ satisfies (i)--(iii).
Property (i) fixes the value $\phi_\Omega(\tau)=1+y\in K^{\{1\}}(x_\Omega)[y]=\Z[y]$. Property (ii) says that $\phi_\Theta(\tau)$ is divisible by $1$. Property (iii) gives the inclusion of Newton polytopes $\Ne(\phi_\Theta(\tau))\subset [-n,1]$.

Therefore the classes of the form $(1+y)\alpha^{-k}$ with any $k\in [1,n]$ satisfy (i)-(iii).
Hence for $n>1$ the class $\tau$ is not determined by (i)-(iii) in $K^\T(\C)$, nor the values of $\phi_\Theta(\tau)$, nor the value of the Chern character.
\end{example}

We believe that a modification of Theorem \ref{thm:axiomatic}
should be true with {\em non-necessarily connected stabilizers}, but some condition on the discrete part of $\tauy(\Omega\subset V)|\Theta$ should be imposed to guarantee uniqueness.
Moreover, we believe that there is a version of Theorem \ref{thm:axiomatic} dealing with {\em non-necessarily positive} actions.
We plan to study these extensions
in the future.

\section{Motivic Chern classes of Schubert cells in partial flag varieties}
\label{sec:compact}
In this section we reinterpret results of \cite{RTV3} to our settings, and thus we obtain explicit rational function representatives of motivic Chern classes of Schubert cells in partial flag varieties.

\subsection{The partial flag variety and its Schubert cells}
Let  $N,n$ be non-negative integers, and let $\mu=(\mu_1,\ldots,\mu_N)\in \N^N$, such that $\sum_{i=1}^N \mu_i=n$. Define $\mu^{(j)}=\sum_{i=1}^j \mu_i$. Consider the flag variety $\Fl_\mu$ parameterizing flags of subspaces
\[
0=V_0 \subset V_1\subset V_2\subset  \ldots \subset V_{N-1} \subset V_N=  \C^n
\]
with $\dim V_j=\mu^{(j)}$. The tautological bundle of rank $\mu^{(j)}$, whose fiber over $V_\bullet$ is $V_j$, will be denoted by $\F_{j}$.

Let $I = (I_1,...,I_N)$ be a partition of $\{1,...,n\}$ into disjoint subsets $I_1,...,I_N$ with $|I_j|=\mu_j$. The set of such $I$'s will be denoted by $\I_\mu$. We will use the following notation: For $I\in \I_\mu$ let $I^{(j)}=\cup_{i=1}^j I_i$ and $I^{(j)}=\{i^{(j)}_1<\ldots<i^{(j)}_{\mu^{(j)}}\}$.

For $I\in \I_\mu$ define the Schubert cell
\[
\Omega_I =
\{V_{\bullet}\in \Fl_\mu:  \dim(V_{p} \cap \C_\text{last}^q ) = \#\{i \in I_1 \cup \ldots, \cup I_p: i> n-q\}, \forall p,q\},
\]
where $\C_\text{last}^q$ is the span of the last $q$ standard basis vectors in $\C^n$. We have $\codim \Omega_I=\#\{(a,b)\in \underline{n}\times \underline{n}: a>b, a\in I_j, b\in I_k,j<k\}$.

\subsection{The equivariant K-ring of $\Fl_\mu$} \label{sec:Kring}

The standard action of the torus $\T=(\C^*)^n$ on $\C^n$ induces an action on $\Fl_\mu$ and the bundles $\F_j$. Denote the K-theoretic Chern roots of $\F_j$ by $\alpha^{(j)}_a$ ($a=1,\ldots,\mu^{(j)}$), that is $\sum_a \alpha^{(j)}_a=\F_j$ in $K^\T(\Fl_\mu)$. Observe that $\alpha^{(N)}_a$ are the Chern roots of the trivial $\C^n$ bundle with the standard $\T$-action, that is, the $\alpha$-variables with upper index $N$ are variables in $K^\T(\pt)$. The algebra $K^\T(\Fl_\mu)$ is a certain quotient of the Laurent polynomial ring
\begin{equation}\label{eqn:thering}
\Z\left[(\alpha^{(j)}_a)^{\pm1}\right]^{S_{\mu^{(1)}} \times \ldots\times S_{\mu^{(N-1)}}}_{j=1,\ldots,N, a=1,\ldots,\mu^{(j)}},
\end{equation}
by an ideal---not needed in this paper---whose generators express the fact that the bundles $\F_j/\F_{j-1}$ have rank $\mu_j$.

Another way of describing the algebra $K^{\T}(\Fl_\mu)$ is equivariant localization. The torus fixed points of $\Fl_\mu$ are flags of coordinate subspaces, they are also parameterized by $\I_\mu$. The restriction homomorphism $r_I:K^\T(\Fl_\mu) \to K^\T(x_I)$ to the fix point $x_I$ corresponding to $I\in \I_\mu$ is the substitution
\begin{equation}\label{eqn:egymeg}
r_I:\alpha^{(j)}_a \mapsto \alpha^{(N)}_{i^{(j)}_a}\qquad\text{for } j=1,\ldots,N-1, a=1,\ldots,\mu^{(j)}.
\end{equation}

\begin{example} \rm
We have
\[
K^{\T}(\Fl_{(1,1,1)})=\Z\left[\left(\alpha^{(1)}_1\right)^{\pm1};\left(\alpha^{(2)}_1\right)^{\pm1},
\left(\alpha^{(2)}_2\right)^{\pm1};\left(\alpha^{(3)}_1\right)^{\pm1},
\left(\alpha^{(3)}_2\right)^{\pm1},\left(\alpha^{(3)}_3\right)^{\pm1}\right]^{S_2}/\text{ideal},
\]
and the restriction map
\[
K^{\T}(\Fl_{(1,1,1)}) \to K^{\T}(x_{(\{u\},\{v\},\{w\})})=
\Z\left[\left(\alpha^{(3)}_1\right)^{\pm1},\left(\alpha^{(3)}_2\right)^{\pm1},\left(\alpha^{(3)}_3\right)^{\pm1}\right]
\]
to $x_{(\{u\},\{v\},\{w\})}$, where $(u,v,w)$ is a permutation of $(1,2,3)$, is induced by
\[
\alpha^{(1)}_1\mapsto \alpha^{(3)}_u, \qquad
\alpha^{(2)}_1\mapsto \alpha^{(3)}_{\min(u,v)},
\alpha^{(2)}_2\mapsto \alpha^{(3)}_{\max(u,v)}.
\]
Because of the $S_2$-symmetry in $\alpha^{(2)}_1,\alpha^{(2)}_2$, the same map is obtained by
\[
\alpha^{(1)}_1\mapsto \alpha^{(3)}_u, \qquad
\alpha^{(2)}_1\mapsto \alpha^{(3)}_{u},
\alpha^{(2)}_2\mapsto \alpha^{(3)}_{v}.
\]

\end{example}

\subsection{Weight functions, modified weight functions} \label{sec:weightfunctions}
 For $I\in \I_\mu$, $j=1,\ldots,N-1$, $a=1,\ldots,\mu^{(j)}$, $b=1,\ldots,\mu^{(j+1)}$ define
\[
\psi_{I,j,a,b}(\xi)=\begin{cases}
1-\xi & \text{if } i^{(j+1)}_b<i^{(j)}_a \\
 (1+y)\xi & \text{if }  i^{(j+1)}_b=i^{(j)}_a \\
1+y\xi & \text{if }  i^{(j+1)}_b>i^{(j)}_a.
\end{cases}
\]
\begin{remark} \label{rem:coinc}
It is worth comparing the values of this function with the fundamental calculation in Section \ref{sec:fundcalc}.
\end{remark}

Define the ``weight function''
\[
W_I=
\Sym_{S_{\mu^{(1)}} \times \ldots \times S_{\mu^{(N-1)}}} U_I
\]
where
\[
U_I=
\prod_{j=1}^{N-1} \prod_{a=1}^{\mu^{(j)}} \prod_{b=1}^{\mu^{(j+1)}}
\psi_{I,j,a,b}(\alpha^{(j)}_a/\alpha^{(j+1)}_b )
\cdot
\prod_{j=1}^{N-1} \prod_{1\leq a <  b\leq \mu^{(j)}} \frac{ 1+y\alpha^{(j)}_b/\alpha^{(j)}_a}{1-\alpha^{(j)}_b/\alpha^{(j)}_a}.
\]
Here the symmetrizing operator is defined by
\[
\Sym_{S_{\mu^{(1)}} \times \ldots \times S_{\mu^{(N-1)}}}=
\sum_{\sigma\in S_{\mu^{(1)}} \times \ldots \times S_{\mu^{(N-1)}}} U_I(\sigma(\alpha^{(j)}_a))
\]
where the $j$th component of $\sigma$ (an element of $S_{\mu^{(j)}}$) permutes the $\alpha^{(j)}$ variables.
For
\[
e_\mu=
\prod_{j=1}^{N-1}
\prod_{a=1}^{\mu^{(j)}}
\prod_{b=1}^{\mu^{(j)}}
(1+y\alpha^{(j)}_b/\alpha^{(j)}_a)
\]
define the ``modified weight function''
\[
\tilde{W}_I=W_I/e_\mu.
\]
Observe that $\tilde{W}_I$ is not a Laurent polynomial, but rather a ratio of two such.

\begin{lemma}\label{lem:okay}
The $r_J$-image  (c.f. (\ref{eqn:egymeg})) of $\tilde{W}_I$ for any $J\in \I_\mu$ is a Laurent polynomial. There exists a {\em Laurent polynomial} in the ring (\ref{eqn:thering}) whose $r_J$-images are the same as those of $\tilde{W}_I$ for all $J$. The class in $K^\T(\Fl_\mu)[y]$ of this other Laurent polynomial will be denoted by $[\tilde{W}_I]$.
\end{lemma}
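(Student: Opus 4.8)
The plan is to understand the rational function $\tilde W_I$ by analyzing its pole structure and then use equivariant localization to produce the required polynomial class. First I would recall that, by the localization description of $K^\T(\Fl_\mu)$, a family of elements $c_J \in K^\T(x_J) = \Z[(\alpha^{(N)}_a)^{\pm1}]$ (indexed by $J \in \I_\mu$) comes from a single class in the subring (\ref{eqn:thering}) precisely when the $c_J$ satisfy the appropriate ``GKM''-type congruences along the edges of the moment graph of $\Fl_\mu$, together with the $S_{\mu^{(j)}}$-invariance built into the ring. So the statement really has two parts: (a) each $r_J(\tilde W_I)$ is a genuine Laurent polynomial (no poles survive the substitution), and (b) the resulting tuple $(r_J(\tilde W_I))_{J\in\I_\mu}$ satisfies the edge congruences, hence descends to a class $[\tilde W_I]$.

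For part (a), the only denominators in $\tilde W_I = W_I/e_\mu$ are of two kinds: the factors $1-\alpha^{(j)}_b/\alpha^{(j)}_a$ with $a<b$ inside each summand $U_I$, and the factors $1+y\alpha^{(j)}_b/\alpha^{(j)}_a$ coming from $e_\mu$ (those with $a\neq b$; the $a=b$ factors contribute $(1+y)$'s and are harmless). The factors $1-\alpha^{(j)}_b/\alpha^{(j)}_a$ are handled the classical way: although each summand $U_I(\sigma(\alpha))$ has such a pole, the symmetrization $\Sym_{S_{\mu^{(1)}}\times\cdots\times S_{\mu^{(N-1)}}}$ cancels them, because grouping the terms for $\sigma$ and $\sigma\cdot(\text{transposition of }a,b)$ produces a factor that is antisymmetric in $\alpha^{(j)}_a,\alpha^{(j)}_b$ and hence divisible by $(\alpha^{(j)}_a-\alpha^{(j)}_b)$ — this is exactly the standard argument that trigonometric weight functions are polynomial after symmetrization, cf. \cite{RTV3}. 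The $e_\mu$ denominator is more interesting: one observes that the ``hash'' factor $\prod_{a,b}\psi_{I,j,a,b}(\alpha^{(j)}_a/\alpha^{(j+1)}_b)$ in $U_I$, after applying $r_J$, always supplies enough factors of the form $1+y\alpha^{(N)}_p/\alpha^{(N)}_q$ to cancel $r_J(e_\mu) = \prod_{j,a,b}(1+y\alpha^{(N)}_{j^{(j)}_a}/\alpha^{(N)}_{j^{(j)}_b})$. Concretely, for a fixed fixed point $J$, the substitution $r_J$ sends $\alpha^{(j)}_a \mapsto \alpha^{(N)}_{j^{(j)}_a}$, so each variable $\alpha^{(j)}_a$ is identified with one of the $\alpha^{(N)}$'s; tracking which of the three cases of $\psi$ occurs and matching the $i^{(j+1)}_b = i^{(j)}_a$ case against the $e_\mu$ factors shows the cancellation is exact. (Here it is cleanest to first kill the outer symmetrization by noting $r_J$ is $S_{\mu^{(j)}}$-invariant, so $r_J(W_I) = (\text{symmetry factor})\cdot r_J(U_I)$ up to the telescoping of the $1-\alpha^{(j)}_b/\alpha^{(j)}_a$ poles — or, better, compute $r_J \tilde W_I$ directly as the $r_J$-image of the already-polynomial $[\tilde W_I^{\mathrm{sym}}]$.)

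For part (b), I would invoke the standard GKM criterion for $\Fl_\mu$: a tuple $(c_J)$ of Laurent polynomials descends to $K^\T(\Fl_\mu)$ iff for each pair $J, J'$ of fixed points joined by a $\T$-invariant $\P^1$ — these correspond to certain elementary moves changing which block a given index belongs to — the difference $c_J - c_{J'}$ is divisible by $1 - (\text{the weight of that }\P^1)$. Since $\tilde W_I$ is manifestly a single rational function in the $\alpha^{(j)}_a$ and $\alpha^{(N)}_a$ variables (not a fixed-point-dependent formula), and $r_J$, $r_{J'}$ are two substitutions differing by swapping two values among the $\alpha^{(N)}$-slots assigned to the relevant $\alpha^{(j)}$-variables, the congruence follows automatically: $r_J(h) - r_{J'}(h)$ for any Laurent polynomial (or pole-free-after-substitution rational function) $h$ in these variables is divisible by the difference of the two swapped monomials. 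This is the same mechanism by which all weight-function constructions land in equivariant K-theory, so I would state it as a lemma with a one-line proof or cite \cite{RTV3, RTV1}.

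The main obstacle I expect is part (a), specifically the $e_\mu$-cancellation: one must be careful that the division $W_I/e_\mu$ makes sense not just after each substitution $r_J$ but that there is a \emph{single} Laurent polynomial in the ring (\ref{eqn:thering}) with those restrictions — a priori $\tilde W_I$ is only a rational function, and one could imagine the $r_J$-images being polynomial for every $J$ yet the GKM congruences forcing the common lift to have a denominator. The resolution is that once (b) holds, the GKM criterion \emph{produces} the lift as an honest element of (\ref{eqn:thering}), so the two parts together give the statement; but verifying that the $\psi$-factors always dominate $e_\mu$ under $r_J$ requires an honest bookkeeping of the three cases in the definition of $\psi_{I,j,a,b}$ against the index sets $I^{(j)}$, and getting the signs/exponents right is where the real work lies. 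I would organize this as: (1) reduce to the symmetrized polynomial $W_I$ being divisible by $e_\mu$ in the localized ring; (2) do the case analysis fixed-point by fixed-point; (3) apply the GKM lemma. An alternative, perhaps slicker route — which I would mention — is to recognize $[\tilde W_I]$ as (up to normalization) the motivic Chern class $\tauy(\Omega_I \subset \Fl_\mu)$ via the forthcoming Theorem \ref{thm:cptmCW}, from which polynomiality is automatic because motivic Chern classes live in $K^\T(\Fl_\mu)[y]$ by construction; but since the lemma is presumably used \emph{en route} to that theorem, I would give the direct argument.
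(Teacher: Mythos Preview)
The paper does not prove this lemma directly: its proof is a one-line citation to \cite[Lemma~3.3 and Section~5.2]{RTV3} (with the specialization $\sigma=\id$, $m_{i,j}=-1$ spelled out in Remark~\ref{rem:expl}). Your proposal is therefore a genuinely different route---a self-contained pole analysis plus a GKM argument---rather than an unpacking of the paper's proof.

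Your part (b) is essentially correct and is a clean observation: once part (a) is known, for adjacent fixed points $J,J'$ differing by the swap $p\leftrightarrow q$, the substitutions $r_J,r_{J'}$ agree after setting $\alpha^{(N)}_p=\alpha^{(N)}_q$, so the difference of the two Laurent polynomials $r_J(\tilde W_I)-r_{J'}(\tilde W_I)$ vanishes along that hyperplane and is hence divisible by $1-\alpha^{(N)}_p/\alpha^{(N)}_q$. This really does give the GKM congruences for free. In part (a), the symmetrization argument killing the $1-\alpha^{(j)}_b/\alpha^{(j)}_a$ poles is standard and correct. The only substantive gap is the $e_\mu$-cancellation, which you correctly flag as the crux but do not carry out. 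One small correction there: the case $i^{(j+1)}_b=i^{(j)}_a$ of $\psi$ contributes $(1+y)\xi$, which after $r_J$ supplies the \emph{diagonal} factors $(1+y)^{\sum_j\mu^{(j)}}$ of $e_\mu$, not the off-diagonal ones $1+y\alpha^{(N)}_{c'}/\alpha^{(N)}_c$; those must come from the $1+y\xi$ case of $\psi$ at various levels together with the R-matrix numerators $1+y\alpha^{(j)}_b/\alpha^{(j)}_a$ surviving from $U_I$, and matching them against $r_J(e_\mu)$ term-by-term across the symmetrization is exactly the bookkeeping that \cite{RTV3} performs. So your plan is sound, but the part you defer is precisely the part that carries the weight; the paper sidesteps this by outsourcing it.
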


\begin{proof}
The statement is a special case of \cite[Lemma~3.3 and Section 5.2]{RTV3}, see also Remark~\ref{rem:expl}.
\end{proof}

\begin{theorem}\label{thm:cptmCW}
\label{thm:cpt}
We have
\[
\tauy(\Omega_I\subset \Fl_\mu)=[\tilde{W}_I] \in K^\T(\Fl_\mu).
\]
\end{theorem}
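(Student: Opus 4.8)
The plan is to verify that the modified weight function class $[\tilde W_I]$ satisfies the three axioms (i)--(iii) of Theorem~\ref{thm:intchar}, and then invoke the uniqueness statement of Theorem~\ref{thm:axiomatic}. For this we must first check that the hypotheses of these theorems are met for the $\T=(\C^*)^n$ action on $\Fl_\mu$: the action has finitely many orbits (the Schubert cells $\Omega_I$), the stabilizer of each fixed point is $\T$ itself (hence connected), and the positivity condition (*) holds --- indeed, the normal space $\nu(\Omega_I)$ to a Schubert cell at the fixed point $x_I$ has weights that are, up to sign conventions, the positive roots ``below the diagonal'' determined by $I$, and these all lie in an open half-space (choosing $s$ so that $\alpha_1^{(N)}\gg\alpha_2^{(N)}\gg\cdots$ does the job). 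Since in this torus situation the orbit through $x_I$ is $\Omega_I$ and $\T_{\Omega_I}=\T$, the map $\phi_{\Omega_I}:K^\T(\Fl_\mu)\to \Rep(\T)$ is exactly the fixed-point restriction $r_I$ of Section~\ref{sec:Kring}.

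The main computation is then to evaluate $r_J([\tilde W_I])$ for all $J\in\I_\mu$ and compare with the values forced by the axioms. By Lemma~\ref{lem:okay}, $r_J([\tilde W_I])=r_J(\tilde W_I)=r_J(W_I)/r_J(e_\mu)$, and $r_J$ is the substitution $\alpha^{(j)}_a\mapsto \alpha^{(N)}_{j^{(j)}_a}$. The key structural fact --- this is the heart of the Tarasov--Varchenko weight function formalism, already present in \cite{RTV3} --- is a \emph{triangularity} statement: $r_J(W_I)=0$ unless $J\succeq I$ in the appropriate Bruhat-type order, with equality $J=I$ giving an explicit product that one recognizes as $\lambda_y(T^*_{x_I}\Omega_I)\cdot\lambda_{-1}(\nu(\Omega_I)^*)\cdot r_I(e_\mu)$. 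Concretely, in the diagonal substitution $r_I$ the factors $\psi_{I,j,a,b}$ with $i^{(j+1)}_b = i^{(j)}_a$ contribute the $(1+y)\xi$ terms that, combined with the ``$a<b$'' product and after dividing by $e_\mu$, collapse exactly to the product of $(1+y/w)$ over tangent weights $w$ of $\Omega_I$ times the product of $(1-1/w)$ over normal weights $w$; this matches axiom (i) via the fundamental calculation of Section~\ref{sec:fundcalc} (Remark~\ref{rem:coinc}). For $J\ne I$ one checks: divisibility by $\lambda_y(T^*_{x_J}\Theta)$, i.e. axiom (ii) --- this comes from the symmetrization $\Sym$ producing, after specialization at $x_J$, the full factor $\prod(1+y\,\alpha_b/\alpha_a)$ over pairs of Chern roots of each $\F_j$, exactly as in Lemma~\ref{le:decomposition}; and axiom (iii), the strict Newton-polytope containment, which follows from a degree/support estimate on $r_J(W_I)$ showing that each monomial in $r_J(\tilde W_I)$ lies strictly inside $\Ne(\lambda_y(T^*_{x_J}\Theta)\cdot\lambda_{-1}(\nu(\Theta)^*))$ away from the origin. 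The Newton-polytope bookkeeping can be organized so that it reduces to Proposition~\ref{prop:analytic}, i.e.\ to checking finiteness of one-parameter limits $r_J(\tilde W_I)(\xi^s)/\lambda_{-1}(\nu(\Theta)^*)(\xi^s)$, which in turn follows from the structure of the $\psi$-factors.

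Once all three axioms are verified for $[\tilde W_I]$ at every fixed point, Theorem~\ref{thm:axiomatic} gives $[\tilde W_I]=\tauy(\Omega_I\subset\Fl_\mu)$ and the proof is complete.

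I expect the \textbf{main obstacle} to be the verification of axiom (iii), the strict Newton-polytope containment, for all $J\ne I$ simultaneously. The weight function $W_I$ is a sum over a product of symmetric groups, and after a non-diagonal specialization $r_J$ there is massive cancellation among the terms; controlling the Newton polytope of the result requires understanding which monomials survive and bounding their exponents. The cleanest route is probably not a direct monomial analysis but rather the route through Corollary~\ref{co:strictinc} and Theorem~\ref{thm:smallness}: combine Lemma~\ref{le:decomposition} (to factor off $\lambda_y(T^*_{x_J}\Theta)$ and reduce to the slice $S_\Theta$) with the already-established smallness of $\tauy(\Omega\cap S_\Theta\subset S_\Theta)|_{x_\Theta}$, so that the only thing one genuinely needs from the explicit formula is axioms (i) and (ii) --- but one still has to match the formula's behavior on the slice, which is where the combinatorics of \cite{RTV3} is unavoidable. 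A secondary technical point is making the identifications of Section~\ref{sec:Kring} precise enough that $\phi_{\Omega_I}$ really is $r_I$ on the nose, including sign/dual conventions for $T^*$ versus $T$ and for $\nu(\Omega)$ versus $\nu(\Omega)^*$.
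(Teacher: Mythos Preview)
There is a genuine gap in your setup: the torus $\T=(\C^*)^n$ does \emph{not} act on $\Fl_\mu$ with finitely many orbits, and the Schubert cells $\Omega_I$ are \emph{not} $\T$-orbits. Already for $\Gr_2(\C^4)$ the open Schubert cell is $4$-dimensional while a generic $\T$-orbit in it is $2$-dimensional, so there is a continuum of $\T$-orbits. Consequently the hypothesis ``finitely many orbits'' of Theorem~\ref{thm:axiomatic} fails for $G=\T$, and the uniqueness argument you want to invoke is unavailable as stated. Your claim that ``the stabilizer of each fixed point is $\T$ itself'' is true but irrelevant, since Theorem~\ref{thm:axiomatic} requires connectedness of the stabilizer of a point in \emph{each orbit}, and the orbits have been misidentified.

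The fix is exactly what the paper does: apply Theorems~\ref{thm:intchar} and~\ref{thm:axiomatic} to the action of the Borel subgroup $B_n^-$ on $\Fl_\mu$, whose orbits \emph{are} the Schubert cells. Since $B_n^-$ is connected solvable with maximal torus $\T$, one has $K^{B_n^-}(\Fl_\mu)\simeq K^\T(\Fl_\mu)$ (cf.\ \cite[5.2.18]{ChrissGinz}), so the classes still live in the ring of Section~\ref{sec:Kring}; and for each $I$ the $B_n^-$-stabilizer of $x_I$ contains the full torus $\T$, so $\T_{\Omega_I}=\T$ and $\phi_{\Omega_I}=r_I$ as you wanted. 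One must also check that these $B_n^-$-stabilizers are connected, which is straightforward (they are intersections of $B_n^-$ with parabolic-type subgroups). With this correction in place, your plan --- verify axioms (i)--(iii) for the $r_J$-images of $[\tilde W_I]$ and then invoke uniqueness --- is precisely the paper's proof; the paper simply cites \cite{RTV3} (Lemmas~3.5, 3.6 and Theorem~3.9 there, specialized as in Remark~\ref{rem:expl}) for those three verifications rather than redoing the weight-function combinatorics.
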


\begin{proof}
The $r_J$-images of $[\tilde{W}_I]$ satisfy the axioms for $\tauy(\Omega_I\subset \Fl_\mu)$ in Theorems \ref{thm:intchar}, \ref{thm:axiomatic}, for the $B_n^-$-action on $\Fl_\mu$. This statement is a special case of Lemma~3.5, Lemma~3.6, and Theorem~3.9 of \cite{RTV3}, see also Remark~\ref{rem:expl}
\end{proof}

\begin{remark} \label{rem:expl}
In the proof of Lemma \ref{lem:okay} and Theorem \ref{thm:cptmCW} we cited ``special cases'' of results in \cite{RTV3}. Here let us explain to the reader what needs to be ``specialized'' in the results of  \cite{RTV3} for the purpose of our proofs. In \cite[Section 3.1]{RTV3} ``trigonometric weight functions'' $W_{\sigma,I}^\Delta$ and ``modified trigonometric weight functions'' $\tilde{W}_{\sigma,I}^{\Delta}$ are defined, depending on three combinatorial parameters: $I\in \I_\mu$, $\sigma\in S_n$, and an ``alcove'' $\Delta$. Our weight functions $W_I$ and modified weight functions $\tilde{W}_I$ only depend on the parameter $I\in \I_\mu$. The fact is that our weight functions are special cases of $W_{\sigma,I}^{\Delta}$ and $\tilde{W}_{\sigma,I}^{\Delta}$ for special choices of $\sigma$ and $\Delta$. The choice of $\sigma$ is $\sigma=\id$. The alcove $\Delta$ is characterized by a sequence of integers $m_{i,j}$ in \cite{RTV3}. The specialization we need for our paper is $m_{i,j}=-1$ for all $i,j$. For these choices, the cited Lemmas and Theorem of \cite{RTV3} prove axioms (i), (ii), and a property stronger than (iii) of Theorem \ref{thm:intchar}.
\end{remark}

\begin{remark}
In light of Remark \ref{rem:expl} it is natural to ask how to modify the notion ``motivic Chern class of the Schubert cell $\Omega_I$'' so that it equals the formula $\tilde{W}^\Delta_{\sigma,I}$ of \cite{RTV3}, not just its special case $\tilde{W}^{(m_{i,j}=-1)}_{\id,I}$. The role of $\sigma\in S_n$ is simple, it corresponds to choosing a different reference full flag when defining the Schubert cells.
We plan to explore the role of the alcove $\Delta$ (also called dynamical parameters, or {\em slope} in works of Okounkov), in the future. In the previous works \cite{RTV2} the alcove $(m_{i,j}=0)$ was studied. The weight function $\tilde{W}^{(m_{i,j}=0)}_{\id,I}$ differs from $\tilde{W}^{(m_{i,j}=-1)}_{\id,I}$ by the equivariant Grothendieck duality and a normalizing factor depending on the dimension of the cell.
\end{remark}

\begin{example} \label{ex:Gr24} \rm
Consider $\mu=(2,2)$ and hence $\Fl_\mu=\Gr_2\C^4$. For the motivic Chern class of the Schubert cell corresponding to $(\{1,3\},\{2,4\})\in \I_{(2,2)}$ we get
\[
\tauy(\Omega_{\{1,3\},\{2,4\}}\subset \Gr_2\C^4)=
\left[
\frac{1}{ (1+y)^2(1+\frac{y\alpha_2}{\alpha_1})(1+\frac{y\alpha_1}{\alpha_2})}\left( U_1+U_2\right)
\right]
\]
where
\[
U_1=
\frac{(1+y)^2\frac{\alpha_1\alpha_2}{\beta_1\beta_3}(1+\frac{y\alpha_1}{\beta_2})(1+\frac{y\alpha_1}{\beta_3})(1+\frac{y\alpha_1}{\beta_4})(1-\frac{\alpha_2}{\beta_1})(1-\frac{\alpha_2}{\beta_2})(1+\frac{y\alpha_2}{\beta_4})(1+\frac{y\alpha_2}{\alpha_1})}
{(1-\frac{\alpha_2}{\alpha_1})}
\]
and $U_2(\alpha_1,\alpha_2)=U_1(\alpha_2,\alpha_1)$. Here we used the short-hand notation $\alpha_i=\alpha_{1,i}$, $\beta_i=\alpha_{2,i}$. As seen, we represented $\tauy(\Omega_{\{1,3\},\{2,4\}})$ with a rational function. Yet, its restrictions to all Schubert cells (or, equivalently, torus fixed points) are Laurent polynomials. For example the restriction to $\Omega_{\{1,2\},\{3,4\}}$ is 0, the restriction to
$\Omega_{\{1,3\},\{2,4\}}$ is
$
\left(1+{y\beta_1}/{\beta_2}\right)\left(1+{y\beta_1}/{\beta_4}\right)\left(1-{\beta_3}/{\beta_2}\right)\left(1+{y\beta_3}/{\beta_4}\right),
$
and the restriction to $\Omega_{\{3,4\},\{1,2\}}$ is
\[
\frac{(1+y)\beta_4}{\beta_1^2\beta_2^2}
\left(
y^2(\beta_1\beta_2\beta_3-\beta_3^2\beta_4) \hskip 10 true cm   \right.
\]
\[\
\left.
\hskip 3 true cm
+y(2\beta_1\beta_2\beta_3+\beta_1\beta_2\beta_4-\beta_1\beta_3\beta_4+\beta_2^2\beta_3-\beta_2\beta_3^2-2\beta_2\beta_3\beta_4)+
\beta_1\beta_2^2-\beta_2\beta_3\beta_4
\right).
\]
According to the Newton polytope axiom Theorem \ref{thm:intchar}(\ref{idegree}), the Newton polytope of this last expression needs to be contained in the Newton polytope of $(1-\beta_3/\beta_1)(1-\beta_3/\beta_2)(1-\beta_4/\beta_1)(1-\beta_4/\beta_2)$ (minus the origin); this containment is illustrated in Figure \ref{fig:convex}.
\begin{figure}
\begin{center}
\obrazek{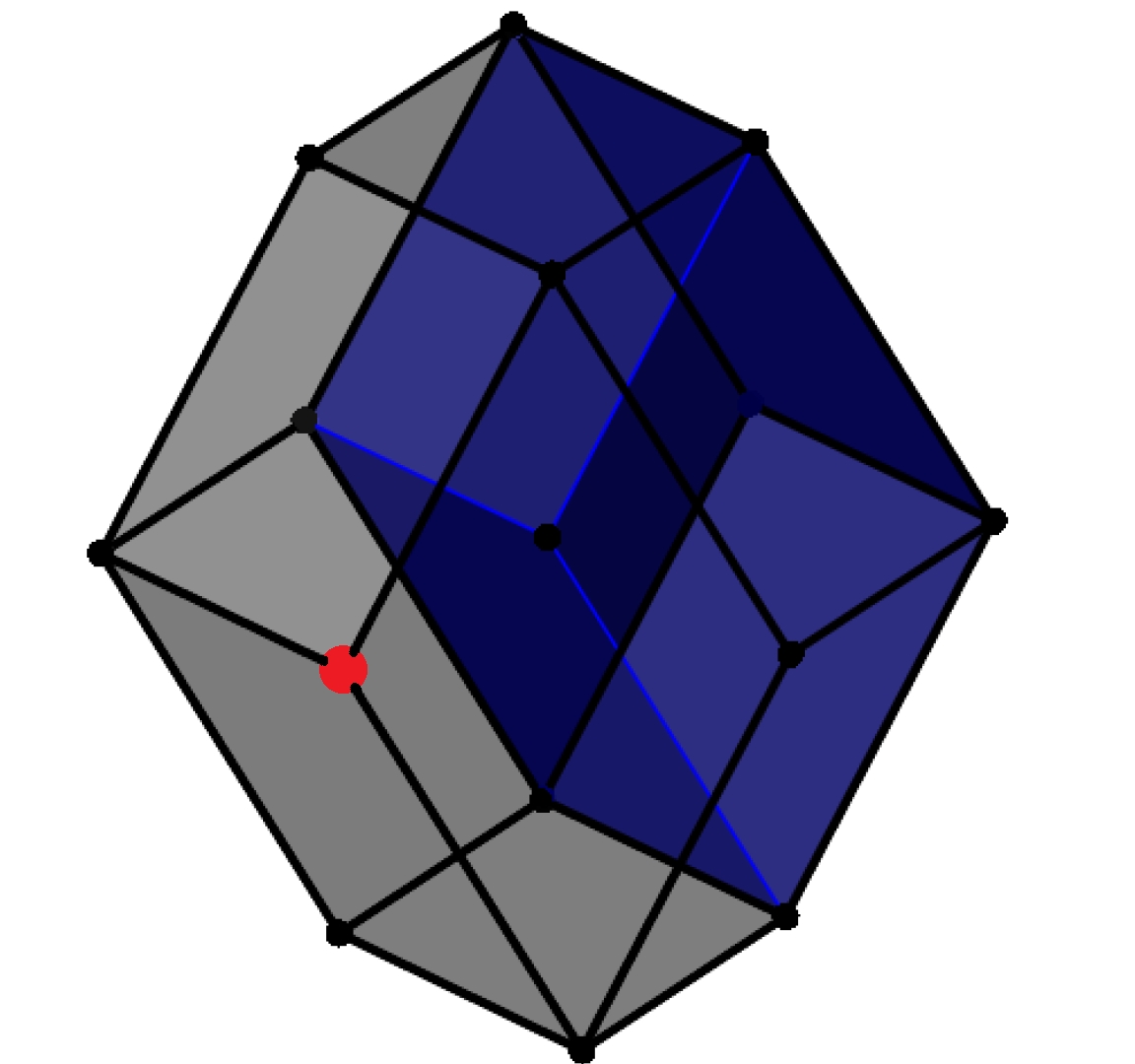}{3.2in}
\caption[]{Newton polytopes of Example \ref{ex:Gr24}}
\label{fig:convex}
\end{center}
\end{figure}
\end{example}

It is natural to think that Theorem \ref{thm:cptmCW} has a geometric proof, which, for example, explains Remark \ref{rem:coinc}. In fact, this is indeed the case, one may use the traditional resolution method to achieve Theorem \ref{thm:cptmCW}. Although such a proof---based on the fact that flag varieties can be obtained as GIT (or symplectic) quotients of vector spaces---has advantages, we decided not to follow that line of reasoning for two reasons. On the one hand a proof  already exists in \cite{RTV3} as we cited. On the other hand, in the related case of ``matrix Schubert cells'' we carry out such a program in the next section, see also \cite{FRW}.

\section{Motivic Chern classes of matrix Schubert cells}\label{sec:tauyMO} \label{sec:mCmS}

For $k\leq n$ let $\GL_k(\C)\times B_n^-$ ($B_n^-$ is the  group of $n\times n$ lower triangular non-singular matrices) act on $\Hom(\C^k,\C^n)$ by $(A,B)\cdot M=BMA^{-1}$. The orbits of this action are parameterized by $d$-element subsets $J$ of $\underline{n}=\{1,\ldots,n\}$, where $0\leq d\leq k$. We will use the notation $J=\{j_1<j_2<\ldots<j_d\}$. The orbit corresponding to $J$ is
\[
\Omega_{k,n,J}=
\{
M\in \C^{n\times k} :
\rk(\text{top $r$ rows of }M)=|J \cap \underline{r}|
\}.
\]
The motivic Chern classes of these orbits live in the $\GL_k(\C)\times B_n^-$-equivariant K-theory algebra of a point (extended by the formal variable $y$), i.e.
\begin{equation}\label{eqn:ring}
\Z[y,\alpha^{\pm 1}_1,\ldots,\alpha^{\pm 1}_k,\beta^{\pm 1}_1,\ldots,\beta^{\pm 1}_n]^{S_k},
\end{equation}
where the $S_k$-action permutes the $\alpha_u$ variables.

\subsection{Weight functions for $\Hom(\C^k,\C^n)$} \label{sec:weight}

Now we define another version of weight functions, also denoted by $W$. Since their indexing is different from weight functions of Section \ref{sec:weightfunctions}, the notational coincidence will not cause misunderstanding (c.f. Remark \ref{rem:specW}).

\begin{definition}
Let $k\leq n$, $I=\{i_1<\ldots<i_d\}\subset \underline{n}$, $|I|=d\leq k$. Let $\aalpha=(\alpha_1,\ldots,\alpha_k)$ and $\bbeta=(\beta_1,\ldots,\beta_n)$ be two sets of variables. Define
\[
W_{k,n,I}(\aalpha,\bbeta)\,=\frac1{(k-d)!}\sum_{\sigma\in S_k} U_{k,n,I}(\sigma(\aalpha),\bbeta)
\]
where
\begin{equation}\label{def:Udef}
U_{k,n,I}(\aalpha,\bbeta)=\prod_{u=1}^{k}\,\prod_{v=1}^{n}\psi_{I,u,v}(\alpha_u/\beta_v)
\cdot \prod_{u=1}^d\prod_{v=u+1}^k\frac{1+y\frac{\alpha_v}{\alpha_u}}{1-\frac{\alpha_v}{\alpha_u}},
\end{equation}
and
\[
\psi_{I,u,v}(\xi)=\left\{\begin{alignedat}4
&1- \xi&\phantom{A}&\text{ if }&\phantom{A}& u>d\;\text{ or }\;&v<i_u\,,
\\[2pt]
&(1+y)\xi &&\text{ if }&& u\leq d\;\text{ and }\; &v=i_u\,,
\\[2pt]
&1+y \xi &&\text{ if }&& u\leq d\;\text{ and }\; &v>i_u\,.
\end{alignedat}\right.
\]
\end{definition}
Although formally the weight function looks a rational function, it is in fact an element of the Laurent polynomial ring (\ref{eqn:ring}).

\begin{remark} \label{rem:specW}
The weight function of this section {\em in the special case of $d=k$} is a special case of the notion of weight function in Section \ref{sec:weightfunctions}. Namely $W_{(I,\underline{n}-I)}(\aalpha,\bbeta)$ (in the sense of Section \ref{sec:weightfunctions}) equals $W_{k,n,I}$ (in the sense of this section).
\end{remark}

\begin{example}\rm
We have
\[
W_{1,2,\{1\}}=
(1+y)\frac{\alpha_1}{\beta_1}\left( 1+\frac{y\alpha_1}{\beta_2}\right),
\qquad
W_{1,2,\{2\}}=
(1+y)\left( 1-\frac{\alpha_1}{\beta_1}\right)\frac{\alpha_1}{\beta_2},
\]
\[
W_{1,2,\{\}}=
\left( 1-\frac{\alpha_1}{\beta_1}\right)\left( 1-\frac{\alpha_1}{\beta_2}\right).
\]
More generally
\[
W_{1,n,\{u\}}=
(1+y)
\prod_{i=1}^{u-1}\left( 1-\frac{\alpha_1}{\beta_i}\right)
\cdot\frac{\alpha_1}{\beta_u}\cdot
\prod_{i=u+1}^{n}\left( 1+\frac{y\alpha_1}{\beta_i}\right),
\qquad
W_{1,n,\{\}}=
\prod_{i=1}^{n}\left( 1+\frac{y\alpha_1}{\beta_i}\right).
\]
For larger $k$ the expanded form of weight functions is less manageable, e.g.

$$W_{2,2,\{1,2\}}=(1+y)^2 \frac{\alpha _1 \alpha _2}{\beta _1 \beta _2}\cdot \left(y^2\frac{\alpha _1 \alpha _2 }{\beta _1 \beta
   _2}+y\left(-\frac{\alpha _1
   \alpha _2}{\beta _1 \beta _2}+\frac{\alpha _1}{\beta _1}+\frac{\alpha _1}{\beta
   _2}+\frac{\alpha _2}{\beta _1}+\frac{\alpha _2}{\beta
   _2}-1\right) +1\right)$$

\begin{multline*}
W_{2,4,\{2,3\}}=
(1+y)^2\prod_{i=1}^2\left(1-\frac{\alpha_i}{\beta_1}\right)\left(1+y\frac{\alpha_i}{\beta_4}\right)\cdot\alpha_1\alpha_2
 \\
\times\left(
\frac{(1-y)}{\beta_2\beta_3}
+(y^2-y)\frac{\alpha_1\alpha_2}{\beta_2^{2}\beta_3^{2}}
+y(\alpha_1+\alpha_2)\left( \frac{1}{\beta_2\beta_3^{2}} + \frac{1}{\beta_2^{2}\beta_3}\right)
\right).
\end{multline*}
\end{example}

The reader may find it instructive to verify that the sum of all weight functions for $k=1$ and fixed $n$ factors as
$\prod_{i=1}^n (1+y\alpha_1/\beta_i)$. The analogous fact for all $k,n$ will follow from Theorem \ref{thm:tauyMO}.

\subsection{Motivic Chern classes of matrix Schubert cells} \label{sec:tauyW}

Recall from the beginning of Section~\ref{sec:tauyMO} the $\GL_k(\C)\times B_n^-$-action on $\Hom(\C^k,\C^n)$ and its orbits $\Omega_{k,n,J}$.

\begin{theorem}\label{thm:tauyMO}
For the $\GL_k(\C)\times B_n^-$-equivariant motivic Chern classes we have
\[
\tauy(\Omega_{k,n,J}\subset \Hom(\C^k,\C^n))=
W_{k,n,J}.
\]
\end{theorem}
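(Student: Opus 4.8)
The plan is to prove Theorem~\ref{thm:tauyMO} by the resolution method: exhibit an explicit proper normal crossing extension of the orbit $\Omega_{k,n,J}$, apply the alternating-sum formula \eqref{alternating-definition}, and identify the resulting expression with the weight function $W_{k,n,J}$. The natural resolution to use is the one coming from the realization of matrix Schubert varieties as degeneracy loci: over a partial flag variety (or a tower of Grassmann bundles) one has a tautological subbundle, and the locus of maps $M\in\Hom(\C^k,\C^n)$ whose image/rank behavior is compatible with the flag gives a vector bundle over the flag base whose total space maps properly to $\Hom(\C^k,\C^n)$, restricting to an isomorphism over the open orbit. Concretely, for $J=\{j_1<\dots<j_d\}$ one parametrizes a $d$-plane in $\C^k$ together with the appropriate partial flag data in $\C^n$ recording where the rows of $M$ jump; the relevant base is essentially a product/tower of flag varieties and the resolution is a $\GL_k\times B_n^-$-equivariant vector bundle over it. The complement of the open part is a union of the pullbacks of Schubert divisors, which have normal crossings.

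The key steps, in order, would be: (1) Write down the resolution $\bar f\colon Y\to\Hom(\C^k,\C^n)$ explicitly as the total space of an equivariant bundle over a flag-type base $B$, check it is smooth, $\bar f$ is proper, and $\bar f^{-1}(\Omega_{k,n,J})\cong\Omega_{k,n,J}$; (2) identify the complement $Y\setminus\bar f^{-1}(\Omega_{k,n,J})$ as a simple normal crossing divisor $\bigcup D_i$, with each $D_i$ (and each intersection $D_I$) again a bundle over a smaller flag base, so that $\lambda_y(T^*D_I)$ is computable in terms of tautological Chern roots; (3) compute the pushforward $\bar f_*$ (equivalently the pushforward $B\to\pt$ along the flag base, since the bundle part contributes $\lambda_y$ of the fiber cotangent weights) using the Atiyah--Bott/Jeffrey--Kirwan-type localization or iterated residue / Weyl-symmetrization formula for pushforward from a flag bundle—this is where the symmetrizer $\Sym_{S_k}$ and the factors $\frac{1+y\alpha_v/\alpha_u}{1-\alpha_v/\alpha_u}$ (the cotangent weights of the Grassmann/flag fiber) enter; (4) match the alternating sum over $I\subset\underline s$ of these pushforwards against the definition of $U_{k,n,I}$: the three cases of $\psi_{I,u,v}$ should correspond exactly to the fundamental calculation \eqref{eqn:key}, namely $1-\xi$ for a point-like (collapsed) direction, $1+y\xi$ for a full line direction, and $(1+y)\xi$ for the punctured-line direction, matching Remark~\ref{rem:coinc}; (5) conclude that the result is the Laurent polynomial $W_{k,n,J}$ (using that the apparent denominators cancel after symmetrization, as in Lemma~\ref{lem:okay}).

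Alternatively—and this may be cleaner to write—one can sidestep a from-scratch resolution computation by invoking the axiomatic characterization. One would check that $W_{k,n,J}$ satisfies properties (i)--(iii) of Theorem~\ref{thm:intchar} for the $\GL_k(\C)\times B_n^-$-action on $\Hom(\C^k,\C^n)$: the action has finitely many orbits, stabilizers are connected, and one must verify the positivity condition (*) on the normal spaces $\nu(\Omega_{k,n,J})$; then normalization $\phi_{\Omega}(W_{k,n,J})=\lambda_y(T^*_{x}\Omega)\lambda_{-1}(\nu^*)$, the divisibility/support axiom $\phi_{\Theta}(W_{k,n,J})$ divisible by $\lambda_y(T^*\Theta)$ for $\Theta\ne\Omega$, and the Newton polytope axiom follow from the corresponding statements about trigonometric weight functions in \cite{RTV3} via Remark~\ref{rem:specW} (the $d=k$ case is literally the flag-variety weight function of Section~\ref{sec:weightfunctions}, and the general $d<k$ case requires checking the extra $\psi$-case $u>d$ fits the pattern). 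Then Theorem~\ref{thm:axiomatic} gives uniqueness and hence equality with $\tauy(\Omega_{k,n,J})$.

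The main obstacle in the resolution approach is step~(3)--(4): carrying out the pushforward from the flag bundle and organizing the alternating sum over $I$ so that it visibly collapses into the symmetrized product defining $W_{k,n,J}$, including verifying the (nontrivial) cancellation of the spurious poles $1/(1-\alpha_v/\alpha_u)$ so that the answer lands in the Laurent polynomial ring \eqref{eqn:ring}. In the axiomatic approach the main obstacle is instead verifying the positivity hypothesis (*) for all orbits of this action and cleanly reducing the support and Newton-polytope axioms to the results of \cite{RTV3} in the $d<k$ range, where the weight functions here are genuinely more general than those of Section~\ref{sec:weightfunctions}; since the paper advertises this as the ``resolution with normal crossings'' illustration, the intended proof is presumably the geometric one, with \cite{FRW} as a companion reference for the detailed bundle computation.
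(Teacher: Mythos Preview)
Your resolution approach is in the right spirit and would work, but the paper's execution is simpler and sidesteps precisely the obstacle you flag in steps~(3)--(4). The paper does \emph{not} build a proper normal-crossing extension and does \emph{not} invoke the alternating-sum formula \eqref{alternating-definition}. Instead it constructs a locally closed $\tilde\Omega_{k,n,J}\subset M=\Hom(\C^k,\C^n)\times\Fl_{d,k}$ (a partial flag in the \emph{source} $\C^k$, not in $\C^n$) that maps isomorphically onto $\Omega_{k,n,J}$ under the projection $\rho$; by functoriality $\tauy(\Omega_{k,n,J})=\rho_*\,\tauy(\tilde\Omega_{k,n,J}\subset M)$. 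The pushforward is then computed by Lefschetz--Riemann--Roch localization over the torus fixed points of $M$, which lie on the zero section $\Fl_{d,k}$ and are parametrized by cosets $S_k/S_{k-d}$; this is the origin of the symmetrizer $\frac{1}{(k-d)!}\sum_{\sigma\in S_k}$.

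The crux is that around each fixed point one writes down explicit affine coordinates on $M$ (the paper draws the matrix pictures and works an example) in which $\tilde\Omega_{k,n,J}$ is \emph{literally a product} of factors of the three types $\{0\}$, $\C$, and $\C\setminus\{0\}$. Hence $\tauy(\tilde\Omega_{k,n,J}\subset M)|_x$ is read off immediately from the fundamental calculation of \S\ref{sec:fundcalc}: each matrix entry contributes one of the three $\psi$-values, the flag tangent directions contribute the $(1+y\,\alpha_v/\alpha_u)$ factors, and the localization denominator $\lambda_{-1}(T^*_x\Fl_{d,k})$ supplies the $(1-\alpha_v/\alpha_u)$. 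This is exactly your step~(4) intuition, but it occurs directly at the level of local coordinates---there is no alternating sum over boundary strata to collapse and no separate spurious-pole cancellation to verify. Your axiomatic alternative is not used here; the paper reserves that method for the compact flag-variety case in Section~\ref{sec:compact}.
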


A key difference between this theorem and Theorem \ref{thm:cptmCW} is that here the named class lives in a (Laurent) polynomial ring, rather than in a quotient of such a ring by an ideal. Hence here the weigh function {\em is} the motivic Chern class, not just {\em represents} a motivic Chern class.

\begin{proof}
As before we have $J=\{j_1<\ldots<j_d\}\subset \underline{n}$ with $|J|=d$. Let $\Fl_{d,k}$ be the partial flag variety parameterizing chains of subspaces
\[
 V^\bullet=(V^0\supset V^1\supset \dots \supset V^{d}),\qquad\dim(V^i)=k-i,
\]
let $M= \Hom(\C^k,\C^n)\times\Fl_{d,k}$, and define
\[
 \tilde\Omega_{k,n,J}=
 \left\{(f,V^\bullet)\in M
 \ |\
 f(V^{u-1})\subset F^{j_u-1}, f(V^{u-1})\not\subset F^{j_u} \text{ for all } {u\in\underline{d}}
\text{ and}\;f(V^{d})=\{0\}\right\}.
\]
Consider the $(\C^*)^{k+n}$-equivariant diagram
\[
\xymatrix{\tilde{\Omega}_{k,n,J}\ar[d]^{\simeq}\ar@{^{(}->}[r]& M \ar[d]^{\rho} \ar[r]^-{\pi_2}
&\Fl_{d,k}\\
{\Omega}_{k,n,J}\ar@{^{(}->}[r]& \Hom(\C^k,\C^{n})
}
\]
with the projections $\rho=\pi_1$ and $\pi_2$. The map $\rho$  restricted to $\tilde{\Omega}_{k,n,J}$ is an isomorphism to its image $\Omega_{k,n,J}\subset \Hom(\C^k,\C^n)$ (this resolution is a version of the construction in \cite{KL}, and it also appeared in \cite{balazs-phd,FR}). Therefore, by functoriality of $\tauy$ classes, we have
\[
\tauy( \Omega_{k,n,J}\subset \Hom(\C^k,\C^n))=\rho_*( \tauy(\tilde{\Omega}_{k,n,J} \subset M)).
\]
The Lefschetz-Riemann-Roch theorem of \cite[Thm. 3.5]{Tho}, \cite[5.11.7]{ChrissGinz} or Appendix \ref{LRRgeneral} implies the following localization formula:
\begin{proposition} \label{LRR} Let $K$ be a compact smooth $\T$-variety with finitely many fixed points, $V$ a $\T$-vector space with no zero weight and denote $M:=K\times V$. Then for all $\omega\in K^\T(M)$
\[\frac{(\pi_V)_*( \omega ){|0}}{\lambda_{-1}(T^*_0V)}
=
\sum_{x\in M^{\T}}
\frac{  \omega{|x}}{\lambda_{-1}(T^*_xM)}\in Frac(K^\T(0)), \]
where $\pi_V:K\times V\to V$ is the projection and the equation is meant in the appropriate localization of $K^\T(M)$.
\end{proposition}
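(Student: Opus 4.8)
The plan is to deduce this from Thomason's concentration (localization) theorem in equivariant K-theory \cite[Thm.~3.5]{Tho} (see also \cite[5.11.7]{ChrissGinz}). First I would record the two structural facts that make the statement sensible: the fixed locus of $M=K\times V$ is $M^\T=K^\T\times\{0\}$, which is finite because $K^\T$ is finite and $V$ has no zero weight; and $\pi_V:K\times V\to V$ is proper because $K$ is complete, so the push-forward $(\pi_V)_*$ is defined. I would then base change $K^\T(M)$ to $Frac(K^\T(\pt))$; this is harmless since $K^\T(K\times V)\cong K^\T(K)$ is a finite $K^\T(\pt)$-module.

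Next I would apply concentration to write, in this localization,
\[
\omega=\sum_{x\in M^\T}(i_x)_*\!\left(\frac{\omega{|x}}{\lambda_{-1}(T^*_xM)}\right),
\]
where $i_x:\{x\}\hookrightarrow M$ is the inclusion and $T_xM\cong T_{x}K\oplus V$ is the normal space to the point $x$. This rests on the self-intersection formula $i_x^*(i_x)_*=\lambda_{-1}(T^*_xM)\cdot(-)$ together with the fact that each $\lambda_{-1}(T^*_xM)$ is a unit after localization: $T_xM$ has no zero weight, since its $V$-summand has none by hypothesis and its $T_xK$-summand has none because $x$ is an isolated fixed point of the smooth variety $K$.

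Then, since $\pi_V\circ i_x=j_0$ for every $x\in M^\T$, where $j_0:\{0\}\hookrightarrow V$ is the inclusion of the origin, functoriality of proper push-forward gives
\[
(\pi_V)_*(\omega)=(j_0)_*\!\left(\sum_{x\in M^\T}\frac{\omega{|x}}{\lambda_{-1}(T^*_xM)}\right).
\]
Restricting to $0\in V$ and using the self-intersection formula $j_0^*(j_0)_*=\lambda_{-1}(T_0^*V)\cdot(-)$ (the normal space of the origin in $V$ is $V=T_0V$) produces
\[
(\pi_V)_*(\omega){|0}=\lambda_{-1}(T_0^*V)\cdot\sum_{x\in M^\T}\frac{\omega{|x}}{\lambda_{-1}(T^*_xM)},
\]
and dividing by $\lambda_{-1}(T_0^*V)$, which is invertible in $Frac(K^\T(0))$ since $V$ has no zero weight, is exactly the asserted identity.

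The substance here is entirely Thomason's theorem; the only genuine care needed is the bookkeeping of the localization, namely fixing a multiplicative subset of $K^\T(\pt)$ over which the concentration isomorphism holds and all the relevant classes $\lambda_{-1}(T^*)$ become units. Once this is set up, the geometric compatibilities ($\pi_V\circ i_x=j_0$, functoriality of $(\pi_V)_*$, and the two self-intersection formulas) are immediate, so I do not anticipate a real obstacle beyond this routine setup.
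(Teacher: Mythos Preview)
Your proof is correct and follows exactly the route the paper indicates: the paper simply states that the formula ``follows from the Lefschetz--Riemann--Roch theorem of \cite[Thm.~3.5]{Tho}, \cite[5.11.7]{ChrissGinz}'' (and the general form in the Appendix), without spelling out the steps. Your writeup is a faithful unpacking of that citation---concentration at the finite fixed locus, the self-intersection formula for $i_x$ and $j_0$, and the compatibility $\pi_V\circ i_x=j_0$---so there is no divergence in approach, only in level of detail.
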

Applying to our situation we get
\begin{equation}\label{eqn:snail2}
\frac{\rho_*( \omega ){|0}}{\lambda_{-1}(T^*_0\Hom(\C^k,\C^n))}
=
\sum_{x\in M^{\T}}
\frac{  \omega{|x}}{\lambda_{-1}(T^*_xM)}\in Frac(K^\T(0))
\end{equation}
because the set of torus fixed points $M^{\T}$ is finite.
The formula is equivalent to the much simpler
\begin{equation}\label{eq:LLR-simplified}   \pi_* \omega|0 = \sum_{x\in K^{\T}} \frac{  \omega{|x}}{\lambda_{-1}(T^*_xK)}\in Frac(K^\T(0))[y]. \end{equation}
(Also note that the inclusions $0\to V$ and $K\to K\times V$ induce isomorphisms on $K$-theory.)
The $\T$-fixed points of $M$ are on the zero section $\Fl_{d,k}$, and they are parameterized by cosets $S_k/S_{k-d}$. We rewrite (\ref{eqn:snail2}) for $\omega=\rho_*( \tauy(\tilde{\Omega}_{k,n,J} \subset M))$ as
\begin{equation}\label{eqn:snail1}
\rho_*( \tauy(\tilde{\Omega}_{k,n,J} \subset M)){|0}
=
\sum_{x\in M^{\T}}
\frac{  \tauy(\tilde{\Omega}_{k,n,J}\subset M){|x}}{\lambda_{-1}(T^*_x\Fl_{d,k})}.
\end{equation}
One of the fixed points is $V_\text{last}=(V^\bullet)$ with $V_\text{last}^u=\spa(\epsilon_{u+1},\epsilon_{u+2},\ldots,\epsilon_k)$ (with $\epsilon_i$ being the standard basis vectors of $\C^k$). First we calculate the term on the right hand side of (\ref{eqn:snail1}) corresponding to this fixed point.

A neighborhood of $V^\bullet_\text{last}$ in $M$, ``vertically'' is simply the neighborhood of $V^\bullet_\text{last}$ in $\Fl_{d,k}$, which is naturally identified with the set of unipotent $k\times k$ matrices illustrated in the first picture below. The ``horizontal'' coordinates are the entries of an $n\times k$ matrix.
\begin{center}
\begin{tikzpicture}[scale=.75]
\draw(-1,0) -- (5,0) -- (5,6) -- (-1,6) -- (-1,0);
\draw (-1,2) -- (5,2);
\draw (3,0) -- (3,6);
\draw[fill=gray, opacity=.4]
(-.5,6) --(-.5,5.5) --
(0,5.5)--(0,5)--
(0.5,5)--(0.5,4.5) --
(1,4.5)--(1,4) --
(1.5,4)--(1.5,3.5) --
(2,3.5)--(2,3) --
(2.5,3)--(2.5,2.5) --
(3,2.5) -- (3,6) -- (-.5,6);
\draw (-.75,5.75) node {1};
\draw (-.25,5.25) node {1};
\draw (.25,4.75) node {1};
\draw (.75,4.25) node {1};
\draw (1.25,3.75) node {1};
\draw (1.75,3.25) node {1};
\draw (2.25,2.75) node {1};
\draw (2.75,2.25) node {1};
\draw (3.25,1.75) node {1};
\draw (3.75,1.25) node {1};
\draw (4.25,0.75) node {1};
\draw (4.75,0.25) node {1};
\draw[fill=gray, opacity=.4] (3,2) -- (5,2) -- (5,6) -- (3,6);

\draw (2,-1.25) node {A neighborhood of $V^\bullet_\text{last}$ in $\Fl_{d,k}$};
\draw (5.25,4) node {$d$};
\draw (5.75,1) node {$k-d$};
\draw (-1.5,5.75) node {$\alpha_1$};
\draw (-1.5,4.2) node {$\vdots$};
\draw (-1.5,2.35) node {$\alpha_d$};
\draw (-1.75,1.65) node {$\alpha_{d+1}$};
\draw (-1.5, 1.05) node {$\vdots$};
\draw (-1.5, .25) node {$\alpha_k$};
\draw (-.7,6.4) node {\footnotesize$\alpha_1$};
\draw (1,6.4) node {\footnotesize$\cdots$};
\draw (2.5,6.4) node {\footnotesize$\alpha_d$};
\draw (3.55,6.4) node {\footnotesize$\alpha_{d+1}$};
\draw (4.35, 6.4) node {\footnotesize$\cdots$};
\draw (4.9, 6.4) node {\footnotesize$\alpha_k$};

\draw (8,-1.5) -- (14,-1.5) -- (14,7.5) -- (8,7.5) -- (8,-1.5);
\draw (12,-1.5) -- (12,7.5);
\draw[fill=gray, opacity=.4] (8,6.5) --(8.5,6.5)
--(8.5,5.5)-- (9,5.5)
-- (9,4.5) -- (9.5,4.5)
-- (9.5,3.5) -- (10,3.5)
-- (10,2.5) -- (10.5,2.5)
-- (10.5,1.5) -- (11,1.5)
--(11,0.5) --(11.5,0.5) --(11.5,-0.5) -- (12,-0.5)
-- (12,-1.5) -- (8,-1.5)
;
\draw (8.25,6.25) node {$\bullet$};
\draw (8.75,5.25) node {$\bullet$};
\draw (9.25,4.25) node {$\bullet$};
\draw (9.75,3.25) node {$\bullet$};
\draw (10.25,2.25) node {$\bullet$};
\draw (10.75,1.25) node {$\bullet$};
\draw (11.25,0.25) node {$\bullet$};
\draw (11.75,-.75) node {$\bullet$};
\draw (10.5,5.5) node {\Large$0$};
\draw (13,3) node {\Large$0$};
\draw (9.25,.25) node {\Huge$*$};

\draw (7.6,6.3) node {\footnotesize$j_1$};
\draw (7.6,5.3) node {\footnotesize$j_2$};
\draw (7.6,4.3) node {\footnotesize$j_3$};
\draw (7.6,3.3) node {\footnotesize$j_4$};
\draw (7.6,2.3) node {\footnotesize$j_5$};
\draw (7.6,1) node {\footnotesize$\vdots$};
\draw (7.6,-0.8) node {\footnotesize$j_d$};
\draw (8.2,7.9) node {\footnotesize$\alpha_1$};
\draw (10,7.9) node {\footnotesize$\cdots$};
\draw (11.55,7.9) node {\footnotesize$\alpha_d$};
\draw (12.55,7.9) node {\footnotesize$\alpha_{d+1}$};
\draw (13.35, 7.9) node {\footnotesize$\cdots$};
\draw (13.9, 7.9) node {\footnotesize$\alpha_k$};

\draw (14.5,7.25) node {\footnotesize$\beta_1$};
\draw (14.5,6.75) node {\footnotesize$\beta_2$};
\draw (14.5,3) node {\footnotesize$\vdots$};
\draw (14.6,-.75) node {\footnotesize$\beta_{n-1}$};
\draw (14.5,-1.25) node {\footnotesize$\beta_n$};

\end{tikzpicture}
\end{center}
Consider the subset of $n \times k$ matrices illustrated in the second figure, that is, the entries of the matrix labeled with 0 are 0, the entries labeled by $\bullet$ are non-zero complex numbers, and the entries labeled by $*$ are arbitrary complex numbers. Easy calculation from the definition of $\tilde{\Omega}_{k,n,J}$ shows that the product of matrices should belong to the subset of $\Hom(\C^k,\C^n)$ described above.
New coordinates are obtained by composing matrices (see Example \ref{ex:spec} bellow).

Using the fundamental calculations of Section \ref{sec:fundcalc} we obtain that the numerator of the term corresponding to $V^\bullet_\text{last}$ is the product of factors
\begin{itemize}
\item $1+y\alpha_v/\alpha_u$ for $1\leq u\leq d$, $u+1\leq v\leq k$ (coming from vertical directions, see the first picture);
\item $1+y\alpha_u/\beta_v$ for $1\leq u\leq d$, $j_u< v\leq n$ (the *-entries of the second picture);
\item $1-\alpha_u/\beta_v$ for $1\leq u\leq d$ and $1\leq v<j_u$, or $d<u\leq k$, $1\leq v\leq n$ (the 0-entries of the second picture);
\item $(1+y)\alpha_u/\beta_{j_u}$ for $1\leq u\leq d$ (the $\bullet$-entries in the second picture);
\end{itemize}
while the denominator is $\prod_{u=1}^d\prod_{v=1}^k (1-\alpha_v/\alpha_u)$.

That is, the term on the right hand side of (\ref{eqn:snail1}) corresponding to the fixed point $V^\bullet_\text{last}$ is exactly $U_{k,n,J}$ of (\ref{def:Udef}).
The terms corresponding to the other fixed points, i.e. corresponding to other cosets of $S_k/S_{k-d}$, are obtained by permuting the $\alpha$-variables. Therefore the right hand side of (\ref{eqn:snail1}) is $1/(k-d)!\cdot U_{k,n,J}(\sigma(\aalpha))=W_{k,n,J}$ what we wanted to prove.
\end{proof}

\begin{example} \label{ex:spec} \rm
Here we show the details of the coordinate change used in the proof above for the concrete example $k=4$, $n=4$, $J=\{2,3\}$ ($d=2$).
The neighbourhood of the standard flag $V^\bullet_\text{last}$,  in the flag variety $\Fl_{2,4}$ is parameterized by the matrices of the form
$$\begin{matrix}\left(
\begin{array}{cccc}
 \bullet & 0 &  0&  0\\
 * & \bullet & 0&  0 \\
 * & * & * &  *\\
 * & * & * &  *\\
\end{array}\right)&\phantom{cccccccccccc}&\left(
\begin{array}{cccc}
 1 & * & * & *\\
 0 & 1 & * & *\\
 0 & 0 & 1 & 0\\
 0 & 0 & 0 & 1\\
\end{array}\right)\\ \\
\text{stabilizer of }(V^1_\text{last},V^2_\text{last}),&&\text{local coordinates.}\\
\end{matrix}$$
The set $\tilde\Omega_{2,4,\{2,3\}}$  is described by imposing conditions on the entries of the product of the matrices:
\begin{multline*}\left(
\begin{array}{cccc}
 f_{11} & f_{12} & f_{13}& f_{14} \\
 f_{21} & f_{22} & f_{23}& f_{24} \\
 f_{31} & f_{32} & f_{33}& f_{34} \\
 f_{41} & f_{42} & f_{43}& f_{44} \\
\end{array}
\right)\cdot \left(
\begin{array}{cccc}
 1 & g_{12} & g_{13}& g_{14} \\
 0 & 1 & g_{23} & g_{24}\\
 0 & 0 & 1 & 0\\
 0 & 0 & 0 & 1\\
\end{array}
\right)=\\=
\left(
\begin{array}{cccc}
 f_{11} & f_{11}g_{12}+f_{12} & f_{11}g_{13}+f_{12}g_{23}+f_{13}& f_{11}g_{14}+f_{12}g_{24}+f_{14}\\
\boxed{\bm {f_{21}}} & f_{21}g_{12}+f_{22} & f_{21}g_{13}+f_{22}g_{23}+f_{23}& f_{21}g_{14}+f_{22}g_{24}+f_{24}\\
\bm{ f_{31}} & \boxed{\bm {f_{31}g_{12}+f_{32}}} & f_{31}g_{13}+f_{32}g_{23}+f_{33}& f_{31}g_{14}+f_{32}g_{24}+f_{34}\\
\bm{f_{41}} & \bm{f_{41}g_{12}+f_{42}} & f_{41}g_{13}+f_{42}g_{23}+f_{43}& f_{41}g_{14}+f_{42}g_{24}+f_{44}\\
\end{array}
\right).\end{multline*}
\begin{itemize}
\item the boxed entries should be nonzero,
\item the bold entries are arbitrary,
\item the remaining entries have to vanish.
\end{itemize}
Let us introduce new coordinates  $\{f'_{st}\}_{ s\in\underline{n},\;t\in\underline{k}}$,
which coincide with the  entries of the product matrix. Together with $\{g_{uv}\}_{1\leq u\leq d,\; u<v\leq k}$ it is a coordinate system in a neighborhood of the fixed point $(0,V^\bullet_\text{last})\in M$, which lies in the closure of $\tilde\Omega_{2,4,\{2,3\}}$. In the new coordinates  the set $\tilde \Omega_{2,4,\{2,3\}}$ is defined by the conditions:
$$
\begin{array}{llll}
 f'_{11}=0, & f'_{12}=0, & f'_{13}=0,& f'_{14}=0, \\
 \boxed{\bm{f'_{21}}\not=0,} & f'_{22}=0, & f'_{23}=0,& f'_{24}=0,\\
& \boxed{\bm{f'_{32}}\not=0}, &f'_{33}=0,&f'_{34}=0, \\
&                        &f'_{43}=0. &f'_{44}=0. \\
\end{array}
$$
\end{example}

\begin{remark}
A sketch of the cohomology version of the proof above appeared in \cite[Sect.~12]{FR}. Not surprisingly the cohomological weight function (CSM-class) only differs from the one in this paper by carrying out the changes
\[
1-\frac{1}{\alpha} \mapsto a, \qquad 1+\frac{y}{\alpha} \mapsto 1+a, \qquad \frac{1+y}{\alpha} \mapsto 1
\]
for each of its factors, c.f. Remark \ref{rem:coh}.
\end{remark}

\section{$A_2$ quiver representation or determinantal varieties} \label{sec:A2}

Chern-Schwartz-MacPherson classes of determinantal varieties for $A_2$ quivers were calculated in \cite{PP}. A slightly different proof was given in \cite{FR}. We will adapt the latter proof to calculate the motivic Chern classes.

\subsection{Segre version of motivic Chern class} Imitating the way the Segre-Schwartz-MacPherson class is defined from the Chern-Schwartz-MacPherson class (namely SSM=CSM/total-Chern-class of ambient space), we define the Segre version of the motivic Chern class.

\begin{definition}
For $X\to V$ ($V$ smooth) define the motivic Segre class as
\[
\ts(X\to V)=\tauy(X\to V)/\lambda_y(T^*V)\in  K(V)[[y]].
\]
We use the abbreviation $\ts(X)$ if the map $X\to V$ is an embedding and clear from the context, and we use the same notation for the equivariant version (living in $K^G(V)[[y]]$) if the map $X\to V$ is invariant under a group action.
\end{definition}

The motivic Segre class contains the same information as the motivic Chern class, but some formulas are more transparent in terms of Segre classes, in particular we expect certain stabilization properties of the motivic Segre class, similarly to Segre-Schwartz-MacPherson classes of determinantal varieties.

For example suppose that the embedded smooth variety $\iota:X\hookrightarrow V$ is the zero locus of a section $\sigma: V\to \xi$ of a vector bundle $\xi$ over $V$ such that $\sigma$ is transversal to the zero section. Then  transversality and the multiplicativity of $\lambda_y$  implies

\begin{equation}\label{segre-of-locus}\ts(\iota)=
\frac {\lambda_{-1}(\xi^*)}{\lambda_{y}(\xi^*)}\,.\end{equation}

\subsection{Fixed rank loci}
For $k\leq n$ let $G\!:=\GL_k(\C)\times \GL_n(\C)$ act on $V\!:=\Hom(\C^k,\C^n)$ by $(A,B)\cdot M=BMA^{-1}$. The orbits of this action are characterized by rank, namely
\[
\Sigma^r_{k,n}=
\{\phi\in \Hom(\C^k,\C^n): \dim \ker \phi=r\}
\]
are the orbits for $r=0,\ldots,k$. We will be interested in the $G$-equivariant motivic Chern/Segre classes of these orbits living in
\[
K^\T(\Hom(\C^k,\C^n))[[y]]= K^\T(\pt)[[y]]=\Z[\alpha^{\pm 1}_1,\ldots,\alpha^{\pm 1}_k,\beta^{\pm 1}_1,\ldots,\beta^{\pm 1}_n][[y]]
\]
where $\T=(\C^*)^{k}\times (\C^*)^{n}$, c.f. Section \ref{sec:mCmS}. The first equation is a consequence of the fact that the equivariant $K$-theory of a vector space is canonically isomorphic to the equivariant $K$-theory of a point. The answers will be symmetric in the $\alpha_i$ and $\beta_j$ variables separately (since the orbits are obviously $\GL_k(\C)\times \GL_n(\C)$-invariant), and can be considered as the $\GL_k(\C)\times \GL_n(\C)$-equivariant $\tauy$ and $\ts$ classes.

In the next two sections we will present two formulas for $\tauy$ (or $\ts$) of $\Sigma^r_{k,n}$. The two expressions obtained in Theorems \ref{thm:tauA2v1}, \ref{thm:tauA2v2} will be rather different, their equality is not clear algebraically.

\subsection{Motivic Chern classes of fixed rank loci --- the motivic expression}

Recall the definition of weight functions $W_{k,n,J}$ from Section \ref{sec:weight}.

\begin{theorem}\label{thm:tauA2v1}
We have
\begin{align*}
\tauy(\Sigma^r_{k,n}) =&
\mathop{\sum_{J\subset \underline{n}}}_{|J|=k-r}
W_{k,n,J}(\aalpha,\bbeta), \\
\ts(\Sigma^r_{k,n}) =&
\frac{1}{\prod_{i=1}^k\prod_{j=1}^n \left( 1+ y\alpha_i/\beta_j\right)}\cdot
\mathop{\sum_{J\subset \underline{n}}}_{|J|=k-r}
W_{k,n,J}(\aalpha,\bbeta).
\end{align*}
\end{theorem}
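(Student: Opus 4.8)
The plan is to deduce this from Theorem \ref{thm:tauyMO} by a change-of-group argument combined with additivity, so that essentially no new computation is needed. The first step is to observe that the $G=\GL_k(\C)\times\GL_n(\C)$-orbit $\Sigma^r_{k,n}$ breaks up into $\GL_k(\C)\times B_n^-$-invariant pieces exactly as
\[
\Sigma^r_{k,n}=\bigsqcup_{\substack{J\subset\underline n\\ |J|=k-r}}\Omega_{k,n,J}.
\]
Indeed, for $B\in B_n^-$ the $i$-th row of $BM$ is a linear combination of the rows $1,\dots,i$ of $M$, so the integers $\rk(\text{top }s\text{ rows of }M)$ are invariant under $B_n^-$, and they are trivially invariant under the $\GL_k(\C)$-action by column operations; hence each $\Omega_{k,n,J}$ (with $|J|=d$) consists of rank-$d$ matrices, and conversely every rank-$(k-r)$ matrix lies in exactly one $\Omega_{k,n,J}$, the one recording at which rows the rank of the initial blocks jumps. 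For the proof only this set-level decomposition into locally closed pieces is needed, not that each $\Omega_{k,n,J}$ is a single $\GL_k(\C)\times B_n^-$-orbit.

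Next I would pass from $G$-equivariant to $\GL_k(\C)\times B_n^-$-equivariant $K$-theory. Since $\Sigma^r_{k,n}$ is $G$-invariant, $\tauy(\Sigma^r_{k,n}\subset V)$ is defined in $K^G(V)[y]=K^G(\pt)[y]=\Rep(\GL_k(\C))\otimes\Rep(\GL_n(\C))[y]$. A $G$-equivariant proper normal crossing extension is in particular a $\GL_k(\C)\times B_n^-$-equivariant one, and $K^G(-)\to K^{\GL_k(\C)\times B_n^-}(-)$ commutes with proper pushforward and with $\lambda_y$; hence the construction in \S\ref{tau-definicja} shows that this restriction map sends $\tauy(\Sigma^r_{k,n}\subset V)$ to the class computed $\GL_k(\C)\times B_n^-$-equivariantly, living in $K^{\GL_k(\C)\times B_n^-}(V)[y]=\Z[\alpha_1^{\pm1},\dots,\alpha_k^{\pm1}]^{S_k}[\beta_1^{\pm1},\dots,\beta_n^{\pm1}][y]$ (using that $B_n^-$ deformation-retracts onto its maximal torus). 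Since this comparison map is the inclusion of $S_n$-invariant Laurent polynomials into all Laurent polynomials, it is injective, so it suffices to determine the $\GL_k(\C)\times B_n^-$-equivariant class.

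That class is now immediate: by additivity of $\tauy$ applied to the decomposition of the first step and then Theorem \ref{thm:tauyMO},
\[
\tauy(\Sigma^r_{k,n}\subset V)=\sum_{|J|=k-r}\tauy(\Omega_{k,n,J}\subset V)=\sum_{|J|=k-r}W_{k,n,J}(\aalpha,\bbeta),
\]
which is the first formula; this also shows, a posteriori, that $\sum_{|J|=k-r}W_{k,n,J}$ is symmetric in the $\beta$-variables, something not visible from the weight-function expression itself. For the Segre version, $T_0V=\Hom(\C^k,\C^n)$ carries the $\T$-weights $\beta_j/\alpha_i$, so $\lambda_y(T^*V)=\prod_{i=1}^k\prod_{j=1}^n(1+y\alpha_i/\beta_j)$, and $\ts(\Sigma^r_{k,n})=\tauy(\Sigma^r_{k,n})/\lambda_y(T^*V)$ gives the stated formula. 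The only points requiring genuine care are the orbit decomposition and the injectivity of the change-of-group map; there is no analytic or combinatorial obstacle, since the substantial work has already been done in the proof of Theorem \ref{thm:tauyMO}.
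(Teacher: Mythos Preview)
Your proposal is correct and follows essentially the same approach as the paper: decompose $\Sigma^r_{k,n}$ as the disjoint union of the matrix Schubert cells $\Omega_{k,n,J}$ with $|J|=k-r$, invoke additivity of $\tauy$, and apply Theorem~\ref{thm:tauyMO}. The paper's own proof is a three-line version of this, working directly with the torus $\T=(\C^*)^k\times(\C^*)^n$ rather than the intermediate group $\GL_k(\C)\times B_n^-$; your more explicit change-of-group and injectivity discussion is a harmless elaboration of the same idea.
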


\begin{proof}
The set $\Sigma^r_{k,n}$ is the disjoint union of the sets $\Omega_{k,n,J}$ with
$J\subset \underline{n}$, $|J|=k-r$. This decomposition is $\T=(\C^*)^k\times (\C^*)^n$-invariant. Hence the motivic properties of $\T$-equivariant $\tauy$ and $\ts$ classes imply the statements.
\end{proof}

\subsection{Motivic Chern classes of fixed rank loci --- the sieve expression} \label{sec:sieve}

In this section it will be convenient to denote $q=-y$.
Let
$${a\choose r}_{\!\!q}=\chi_{-q}(\Gr_r(\C^a))=|\Gr_r(\mathbb F_q^a)|\,.$$
be the $q$-binomial coefficient satisfying the recursion
\[
{a+1\choose r}_{\!\!q}=q^r{a\choose r}_{\!\!q}+{a\choose r-1}_{\!\!q},\qquad
{a\choose 0}_{\!\!q}={a\choose a}_{\!\!q}=1.
\]

\begin{theorem}\label{thm:tauA2v2}
We have
\begin{equation}\label{eqn:recsolved}
\ts(\Sigma^r_{k,n})=\sum_{a=r}^k(-1)^{a-r}q^{\frac{(a-r)(a-r-1)}2}{a\choose r}_{\!\!q}\Phi^a_{k,n},
\end{equation}
where
\begin{equation}\label{eqn:Phiexplicit}
\Phi^a_{k,n}=\sum_{I\in {k\choose a}}\prod_{u\in I}\prod_{v=1}^n\frac{1-\tfrac{\alpha_u}{\beta_v}}{1-q\tfrac{\alpha_u}{\beta_v}}
\prod_{u\in I}\prod_{w\in I^\vee}\frac{1-q\tfrac{\alpha_u}{\alpha_w}}{1-\tfrac{\alpha_u}{\alpha_w}}.
\end{equation}
(Here by $I\in {k \choose a}$ we mean that $I$ is a subset of $\underline{k}$ of cardinality $a$, and $I^\vee$ is its complement.)
\end{theorem}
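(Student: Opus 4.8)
The plan is to realise the explicit function $\Phi^a_{k,n}$ of \eqref{eqn:Phiexplicit} as the motivic Segre class of a concrete resolution of the rank loci, then to relate these resolution classes to the orbit classes $\ts(\Sigma^r_{k,n})$ by stratifying the resolution over the orbits of $V$ (the ``sieve''), and finally to invert the resulting triangular system by $q$-binomial M\"obius inversion. This is the $K$-theoretic counterpart of the cohomological computation in \cite{FR}.

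\emph{Step 1: identifying $\Phi^a_{k,n}$.} For $0\le a\le k$ let $\mathcal S\subset\C^k$ be the tautological rank-$a$ subbundle on $\Gr_a(\C^k)$, and set
\[
\widetilde{\Sigma}^{a}=\{(W,\phi)\in\Gr_a(\C^k)\times V\ :\ \phi|_{W}=0\},
\]
a $\GL_k(\C)\times\GL_n(\C)$-equivariant subbundle of the trivial bundle $\Gr_a(\C^k)\times V\to\Gr_a(\C^k)$; thus $\widetilde{\Sigma}^{a}$ is smooth and the projection $\pi_a\colon\widetilde{\Sigma}^{a}\to V$ is proper with image $\bigsqcup_{r\ge a}\Sigma^r_{k,n}$. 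By functoriality of $\tauy$, $\tauy(\widetilde{\Sigma}^{a}\to V)=(\mathrm{pr}_2)_*\,\tauy(\widetilde{\Sigma}^{a}\subset\Gr_a(\C^k)\times V)$, and since $\widetilde{\Sigma}^{a}$ is smooth and closed its restriction to the $\T$-fixed point $(\C^I,0)$, $I\subset\underline{k}$ with $|I|=a$ (and these exhaust the fixed points of $\Gr_a(\C^k)\times V$), equals $\lambda_y(T^*_{(\C^I,0)}\widetilde{\Sigma}^{a})\cdot\lambda_{-1}(\nu^*_{(\C^I,0)})$ with $\nu=\Hom(\mathcal S,\C^n)$ the normal bundle. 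I would then apply the Lefschetz--Riemann--Roch localization formula of Proposition~\ref{LRR} (with $K=\Gr_a(\C^k)$), reading off the weights of the tangent space of $\Gr_a(\C^k)$ at $\C^I$, of the relative tangent bundle $\Hom(\C^k/\mathcal S,\C^n)$, and of $\nu$. Writing $q=-y$, the factors $\prod_{w\in I^\vee,\,v}(1+y\alpha_w/\beta_v)$ cancel against part of $\lambda_y(T^*_0V)$, and the sum over $I$ collapses to exactly
\[
\ts(\widetilde{\Sigma}^{a}\to V)=\frac{\tauy(\widetilde{\Sigma}^{a}\to V)|_0}{\lambda_y(T^*_0V)}=\Phi^a_{k,n}.
\]

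\emph{Step 2: the sieve.} Stratify $\widetilde{\Sigma}^{a}$ over the orbit decomposition $V=\bigsqcup_r\Sigma^r_{k,n}$: for $\phi\in\Sigma^r_{k,n}$ the fibre $\pi_a^{-1}(\phi)=\{W\subset\ker\phi:\dim W=a\}$ is $\Gr_a(\C^r)$, hence $\pi_a^{-1}(\Sigma^r_{k,n})$ is the Grassmann bundle of the rank-$r$ kernel bundle $\mathcal K_r\to\Sigma^r_{k,n}$, empty unless $r\ge a$. Additivity of $\tauy$ gives $\Phi^a_{k,n}=\sum_{r=a}^{k}\ts(\Gr_a(\mathcal K_r)\to V)$. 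The Grassmann-bundle projection $p\colon\Gr_a(\mathcal K_r)\to\Sigma^r_{k,n}$ is proper, and since the only nonvanishing relative Hodge cohomology is $R^jp_*\Omega^j_p$, which is the bundle associated (through $\mathcal K_r$) to the \emph{trivial} action of the connected structure group on $H^j(\Gr_a(\C^r),\Omega^j)$, the projection formula gives $\tauy(\Gr_a(\mathcal K_r)\to V)=\tauy(\Sigma^r_{k,n}\to V)\cdot p_*\lambda_y(T^*_p)=\tauy(\Sigma^r_{k,n}\to V)\cdot\sum_j q^j h^{j,j}(\Gr_a(\C^r))=\binom{r}{a}_{q}\,\tauy(\Sigma^r_{k,n}\to V)$; equivalently one may use a relative Schubert cell decomposition together with $\tauy(\C\to\pt)=-y=q$, a consequence of the fundamental calculation of Section~\ref{sec:fundcalc}. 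Dividing by $\lambda_y(T^*V)$ yields the triangular system
\[
\Phi^a_{k,n}=\sum_{r=a}^{k}\binom{r}{a}_{q}\,\ts(\Sigma^r_{k,n}),\qquad 0\le a\le k.
\]

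\emph{Step 3: inversion, and the main obstacle.} The matrix $\bigl(\binom{r}{a}_{q}\bigr)_{a,r}$ is upper unitriangular, and the $q$-binomial orthogonality relation $\sum_{a=r}^{s}(-1)^{a-r}q^{\frac{(a-r)(a-r-1)}{2}}\binom{s}{a}_{q}\binom{a}{r}_{q}=\delta_{rs}$ inverts it; substituting into the system of Step~2 produces \eqref{eqn:recsolved} (equivalently, one checks directly that the right-hand side of \eqref{eqn:recsolved} satisfies the recursion expressing $\ts(\Sigma^r_{k,n})$ through the $\Phi^a_{k,n}$). The step I expect to require the most care is the equivariant Grassmann-bundle identity in Step~2: a relative cell decomposition of $\Gr_a(\mathcal K_r)$ needs an auxiliary flag of subbundles in $\mathcal K_r$, which is not $\GL_k(\C)\times\GL_n(\C)$-equivariant in general, so the cleanest route is the projection-formula argument, whose only inputs are the triviality of the sheaves $R^jp_*\Omega^j_p$ and the compatibility of the projection formula with the non-proper inclusion $\Sigma^r_{k,n}\hookrightarrow V$ (handled by passing to a neighbourhood in which $\Sigma^r_{k,n}$ is closed). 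The localization bookkeeping in Step~1 and the $q$-series manipulation in Step~3 are then routine.
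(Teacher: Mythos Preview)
Your proof is correct and follows the same three-step architecture as the paper: realise $\Phi^a_{k,n}$ as $\ts$ of the Grassmann resolution $\widetilde{\Sigma}^a\to V$ via Lefschetz--Riemann--Roch localisation, stratify the resolution over the rank orbits to obtain the triangular system $\Phi^a=\sum_{r\ge a}\binom{r}{a}_{q}\ts(\Sigma^r)$, and invert by $q$-binomial M\"obius inversion.

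The only substantive difference lies in how you justify the Grassmann-bundle identity in Step~2, which you rightly flag as the delicate point. You propose a projection-formula argument using the triviality of $R^jp_*\Omega^j_p$ (the connected structure group acts trivially on Hodge cohomology of the fibre). The paper instead shows directly that the kernel bundle $\mathcal K_r\to\Sigma^r_{k,n}$, and hence the associated Grassmann bundle, is \emph{equivariantly Zariski locally trivial}: for each $r$-element coordinate subset $J\subset\underline{k}$ the coordinate projection $\mathcal K_r\to\Sigma^r_{k,n}\times\C^r_J$ is $\T$-equivariant and an isomorphism over an open set $U_J$, and the $U_J$ cover $\Sigma^r_{k,n}$. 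One then uses additivity over this cover together with the product formula $\tauy(U\times F\to V)=\chi_y(F)\,\tauy(U\to V)$. This bypasses entirely the issue you raise about non-proper inclusions and non-equivariant auxiliary flags, and is a bit more elementary than the Hodge-theoretic route. Your alternative suggestion (relative Schubert cells) is essentially this same idea once the equivariant trivialisation is in hand.
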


\begin{proof}

Let $\Gr_{a}(\C^k)$ denote the Grassmannian of $a$-dimensional subspaces in $\C^k$. Consider the smooth variety
\[
\widetilde \Sigma^a_{k,n}=\{(W,f)\in \Gr_{a}(\C^k)\times V\;|\;W\subset \ker(f)\}.
\]
$k$ and $n$ will be fixed, so occasionally we drop them from the notation. Let $\eta_a:\widetilde \Sigma^a_{k,n}\to V$ be the restriction of the projection $\pi_V:K\times V\to V$ for $K=\Gr_{a}(\C^k)$; and denote
$\Phi^a_{k,n}=\ts(\eta_a)$. Notice that $\eta_a$ is a resolution of $\overline\Sigma^a_{k,n}$, however it has not the normal crossing property. The set $\overline\Sigma^a_{k,n}$ can be partitioned according to the corank:
\[ \overline\Sigma^a_{k,n}=\coprod_{r=a}^{k} \Sigma^r_{k,n}.\]
Using the notation $X_{k,n}^{r,a}:=\eta_a^{-1}(\Sigma^r_{k,n})$ and the motivic property we have
\[ \tauy(\eta_a)=\sum_{r=a}^{k}\tauy(\eta_a|_{X_{k,n}^{r,a}}).\]
Notice now that $\eta_a|_{X_{k,n}^{r,a}}$, as a map to its image $\Sigma^r_{k,n}\subset V$, is a fiber bundle with fiber $\Gr_r(\C^a)$, equivariantly locally trivial in Zariski topology.

An important feature of both $\tauy$ and $\ts$ is that they are not necessarily multiplicative with respect to fibrations. Nevertheless the formula
\begin{equation}\label{eqn:Zariski}
p_*(\tauy(X\to Y))=\chi_y(F)\tauy(id_Y)
\end{equation}
holds for fibrations $F\hookrightarrow X\stackrel p\twoheadrightarrow Y$ which are {\it equivariantly locally trivial in Zariski topology}.
This holds because for such a fibration one can decompose the base into invariant locally closed sets over which the bundle is of the product form, see \cite[Rem.~4.3]{AMSS2}.
\medskip

Nonequivariantly (in Zariski topology) the map $\eta_{|X_{k,n}^{r,a}}:X_{k,n}^{r,a}\to\Sigma_{k,n}^r$ is locally trivial. Indeed, it is the Grassmann bundle associated to the vector bundle of kernels
$$Ker_{k,n}^{r}=\{(v,f)\in \C^k\times \Sigma^r_{k,n}\;|\;f(v)=0\;\}\,,$$
$$X_{k,n}^{r,a}=\Gr_a(Ker_{k,n}^{r})\,.$$
Every vector bundle is
locally trivial in Zariski topology. Also we can write an
explicit $\T$ equivariant trivialization. We choose some subset $J$ of coordinates of cardinality $r$. The projection along coordinates $Ker_{k,n}^{r}\to \Sigma^r_{k,n}\times\C^{r}_J$ is $\T$ equivariant, and over an open subset $U_J\subset \Sigma^r_{k,n}$ it is an isomorphism. The sets $U_J$ cover $\Sigma^r_{k,n}$. Applying the Grassmann construction we obtain equivariant trivialization of the bundle $X_{k,n}^{r,a}\to\Sigma^r_{k,n}$.

This implies that
\[\tauy(\eta_a|_{X_r})=\chi_y(\Gr_r(\C^a))\tauy(\Sigma^r),\]
where, according to our convention,  $\tauy(\Sigma^r)=\tauy(\Sigma^r_{k,n}\subset V)$.
Consequently
\[ \tauy(\eta_a)=\sum_{r=a}^{k}\chi_y(\Gr_r(\C^a))\tauy(\Sigma^r)=\sum_{r=a}^{k}\binom{a}{r}_{\!\!q}\tauy(\Sigma^r).\]
Dividing by $\tauy(V)$ we get an equivalent
\[ \Phi^a=\ts(\eta_a)=\sum_{r=a}^{k}\binom{a}{r}_{\!\!q}\ts(\Sigma^r).\]

Using the elementary fact that the matrices
\[
\left[{a\choose r}_{\!\!q}\right]_{1\leq a,r\leq k},\qquad\qquad
\left[ (-1)^{a-r}q^{\frac{(a-r)(a-r-1)}{2}} {a\choose r}_{\!\!q}\right]_{1\leq a,r\leq k}
\]
are inverses to each other we verified \eqref{eqn:recsolved}.

To calculate $\Phi^a_{k,n}=\ts(\eta_a)$ notice that $\eta_a=\pi_V\iota_a$ for the embedding $\iota_a:\widetilde \Sigma^a_{k,n}\to K\times V$ implying that $\tauy(\eta_a)=(\pi_V)_*\tauy(\iota_a)$. In terms of Segre classes this means $$\ts(\eta_a)=(\pi_V)_*\big(\ts(\iota_a)\,\pi_K^*(\lambda_y(T^*K))\big)\,.$$

Consider the  vector bundle $\xi=\Hom(\gamma_a,\C^n)\to K\times V$, where $\gamma_a$ is the tautological bundle over $\Gr_a(\C^k)$. It has a section $\sigma$ defined by $\sigma(W,f):=f|W$. The zero locus of $\sigma$ is $\widetilde \Sigma^a_{k,n}$ by definition and it is easy to check that $\sigma$ is transversal to the zero section of $\xi$.
Therefore we can apply \eqref{segre-of-locus} to obtain
\[\ts(\iota_a)=\frac {\lambda_{-1}(\xi^*)}{\lambda_{y}(\xi^*)}.\]
We can now apply the Lefschetz-Riemann-Roch formula in the form \eqref{eq:LLR-simplified}. The summands in the expression for the motivic Chern class are of the form
$$
\frac{{\lambda_{-1}(\xi^*)} }{\lambda_{-q}(\xi^*)}\cdot \frac{\lambda_{-q}(T_x^*K)}{\lambda_{-1}(T_x^*K)}\,.$$
The factor
$$\prod_{u\in I}\prod_{v=1}^n\frac{1-\tfrac{\alpha_u}{\beta_v}}{1-q\tfrac{\alpha_u}{\beta_v}}$$
in \eqref{eqn:Phiexplicit} corresponds to $$\frac{\lambda_{-1}\xi^*}{\lambda_{-q}\xi^*}=\frac{\lambda_{-1}(\Hom(\gamma_a,\C^n)^*)}{\lambda_{-q}(\Hom(\gamma_a,\C^n)^*)}$$
and
$$\prod_{u\in I}\prod_{w\in I^\vee}\frac{1-q\tfrac{\alpha_u}{\alpha_w}}{1-\tfrac{\alpha_u}{\alpha_w}}$$
corresponds to
$$\frac{\lambda_{-q}(T_x^*K)}{\lambda_{-1}(T_x^*K)}=\frac{\lambda_{-q}(\Hom(\gamma_a,\gamma_a^\perp)^*)}{\lambda_{-1}(\Hom(\gamma_a,\gamma_a^\perp)^*)}\,.$$
Writing the above expression in coordinate roots
  we arrive to the conclusion.

\medskip
\end{proof}

\begin{remark} \rm
It would be interesting to understand the $k,n$-dependence of the formulas in Theorem \ref{thm:tauA2v2}, either combinatorially, or via the language of iterated residues, cf. \cite{FR}. For now, what is clear from the formulas is that both $\Phi^r_{k,n}$ and $\ts(\Sigma^r_{k,n})$ are supersymmetric, i.e. satisfy the functional equation
\[
f(\alpha_1,\ldots,\alpha_k,t;\beta_1,\ldots,\beta_n,t)=
f(\alpha_1,\ldots,\alpha_k;\beta_1,\ldots,\beta_n),
\]
with base case
\[
\ts(\Sigma^k_{k,n})=\ts(\{0\}\subset V)=\Phi^k_{k,n}=\prod_{u=1}^k\prod_{v=1}^n\frac{1-\tfrac{\alpha_u}{\beta_v}}{1-q\tfrac{\alpha_u}{\beta_v}}.
\]
\end{remark}

\subsection{Examples}
For two of the rank loci of the representation $\Hom(\C^2,\C^2)$ we have
\[
\ts(\Sigma^2_{2,2})
=
\frac{\left(1-\frac{\alpha_1}{\beta_1}\right)
   \left(1-\frac{\alpha_2}{\beta_1}\right)
   \left(1-\frac{\alpha_1}{\beta_2}\right)
   \left(1-\frac{\alpha_2}{\beta_2}\right)}
  {\left(1-q\frac{\alpha_1}{\beta_1}\right)
   \left(1-q\frac{\alpha_2}{\beta_1}\right)
   \left(1-q\frac{\alpha_1}{\beta_2}\right)
   \left(1-q\frac{\alpha_2}{\beta_2}\right)},
\]

\begin{multline*}
\ts(\Sigma^0_{2,2})=
\frac{(q-1)^2}{\left(1-q\frac{\alpha_1}{\beta_1}\right)
   \left(1-q\frac{\alpha_2}{\beta_1}\right)
   \left(1-q\frac{\alpha_1}{\beta_2}\right)
   \left(1-q\frac{\alpha_2}{\beta_2}\right)}\cdot\frac{\alpha_1 \alpha_2}{\beta_1
   \beta_2}\cdot \\
\Bigg(q^2\frac{ \alpha_1 \alpha_2}{\beta_1 \beta_2}+q
   \left(\frac{\alpha_2
   \alpha_1}{\beta_1
   \beta_2}-\frac{\alpha_1}{\beta_1}-\frac{\alpha_1}{\beta_2}-\frac{\alpha_2}{\beta_1}-\frac{\alpha_2}{\beta_2}+1\right)+1\Bigg).
\end{multline*}

The motivic Segre classes can be expressed in terms of double stable Grothendieck polynomials $G_\lambda=G_\lambda(\alpha_1,\ldots,\alpha_k;\beta_1,\ldots,\beta_n)$ , see e.g. \cite[Definition 4.2 and Theorem 4.5]{RS}. For example, for $k=n=2$ we have
\begin{align*}
\ts(\Sigma^0_{2,2})&=(G_0 - G_1)+ q (-G_1 + G_2 + G_{11} - G_{21})+
\\ q^2 &(-G_1 + 2 G_2 - G_3 + 2 G_{11} - 4 G_{21} - G_{111} + 2 G_{31} + 2 G_{211} - G_{311})+\ldots
\end{align*}
where experts on algebraic combinatorics can see the usual sign pattern of Grothendieck expansions (e.g. \cite{B}). Further analysis along these lines is subject to future research.

\section{Appendix: comparison of algebraic and topological K-theories}

\label{topKtheory}
\subsection{Algebraic K theory}
Our main reference to algebraic K theory is the chapter 5 of \cite{ChrissGinz} or original papers of Thomason, e.g. \cite{Tho}. We assume that a linear algebraic group $G$ acts on a smooth algebraic variety $X$. We restrict our attention to the case of complex base field. Usually the variety is assumed to be quasiprojective, but some technical issues can be overcome by analyzing of Thomason arguments, see \cite[p.28]{Edi}. The $K^0_G(X)=K_{alg}^G(X)$ is the  K theory of the exact category of the locally free $G$-sheaves on $X$. The higher K-groups $K^i_G(X)$ are defined by means of the Quillen Q-construction. They are functorial with respect to the maps in the category of smooth algebraic varieties.
The following properties are crucial for us:
\begin{description}
\item [A1] $K^0_G (X)=K^G _{alg}(X)$.
\item [A2] Let $X$ be a smooth $G$-variety and $Y$ a closed smooth subvariety. Then there is a functorial exact sequence
\begin{equation*}
\xymatrix{\ar[r]&K_{alg}^G(Y)\ar[r] &K_{alg}^G(X)\ar[r]&K_{alg}^G(X\setminus Y)\ar[r]&0}\\
\end{equation*}
extended by the sequence of higher K theory,
 \cite[(0.3.1)]{Tho}, \cite[5.2.14]{ChrissGinz}.
\item [A3] For a subgroup $H\subset G$ we have $K_G ^*(G \times_H X) \simeq K_H ^*(X)$ for every $H $-variety $X$, \cite[5.2.17]{ChrissGinz}.
\item[A4] $\mathbb A^1$ homotopy invariance:
 If $f:X\to Y$ is a $G$-equivariant affine bundle, then $f^*$ induces an isomorphism of K theory, \cite[5.4.17]{ChrissGinz}.
\item [A5]
If $H$ is a subgroup of $G$, then $$K^*_G(G/H)\simeq K^*_H(\pt)\,.$$
Let $L$ be a maximal reductive subgroup of $H$ such that $H=H_u\rtimes L$, where $H_u$ is the unipotent radical of $H$. Existence of such a subgroup $L$ is guaranteed by \cite[VIII, Th.~4.3]{Hoch}\footnote{If $G$ is connected then $L$ usually is called the Levi factor, otherwise we say that $L$ is a maximal reductive subgroup of $H$.}.
Sine  $H/L$ is an affine space, by the $\mathbb A^1$ homotopy invariance
$$K^*_H(\pt)\simeq K^*_H(H/L)\simeq K^*_L(\pt)\,.$$
The category of $L$-bundles over a point is semisimple, thus
$$K^0_G(G/H)\simeq K^0_L(\pt)= \Rep(L)\,.$$
The higher algebraic K-theory is much complicated. For example for the trivial group $G$
the higher K theory is divisible \cite[VI.1.6]{Weibel}. It follows
from \cite[IV.6.11]{Weibel} that $K^L_{>0}(\pt)$ is a divisible group as well\footnote{We we do not need to use this property for further arguments}.
\item [A6] If $H\subset G$ is connected, then
$$K^0_G(G/H)\simeq\Rep(L)\simeq \Rep(\T_L)^{W_L}\,,$$
where $\T_L$ is the maximal torus of $L$ and $W_L$ is the Weyl group.
\item [A7] The proper map $f:X\to Y$ induces the  push-forward of K theory
$$f_*([\F])=\sum_{i=0}^{\dim Y}(-1)^i[Rf_i(\F)]\,.$$
It is essential that the space $Y$ is smooth, since we have to replace the coherent sheaves by their equivariant resolutions. Existence of a resolution is proven by
\cite[Prop.~2.1]{Nielsen} for torus or \cite[Prop.5.1.28]{ChrissGinz}, \cite{Josh} for general $G$ acting on quasiprojective variety.  The case of varieties which are not quasi-projective is discussed in \cite[p.28]{Edi}.
Therefore we have well defined push-forward  to the equivariant K-theory of a smooth \quapr variety.
For inclusions $i:X\hookrightarrow Y$ the composition $i^*i_*$ is the multiplication by $\lambda_{-1}(\nu^*_{Y/X})$, where $\nu_{Y/X}$ is the normal bundle,
\cite[5.4.10]{ChrissGinz}.

\item [A8] (Localization theorem.) Suppose that $G =\T $ is a torus.
Then the kernel and the cokernel of the restriction to the fixed point set
$$K_\T ^*(X)\to K_\T ^*(X^\T )$$
are torsion $\Rep(\T )$-modules, \cite[Th.~2.1]{Tho}.

\end{description}

\subsection{Topological K theory}
Let $\G $ be a compact Lie group and
let $X$ be a compact topological $\G $-space, $\G$-homotopy equivalent to a $\G$--CW-complex. By $K^\G _{top}(X)$ we denote the Grothendieck ring of topological $\G $-vector bundles over $X$. It is a contravariant functor on the category of $\G $-spaces. According to  Segal \cite{Seg} this functor extends to a $\Z_2$ graded multiplicative generalized equivariant cohomology theory $K^*_\G (-)$ defined for spaces
which are of $\G $--homotopy type of finite $\G $-CW-complexes. We list below the properties corresponding to the analogous properties of algebraic K theory:
\begin{description}
\item [T1] $K^0_\G (X)=K^\G _{top}(X)$.
\item [T2] $K^*_\G$ is a $\Z_2$-graded generalized cohomology theory, in particular there is a six-term exact sequence for a pair $(X,X\setminus Y)$ (we assume that this pair is homotopy equivalent to a $\G$-CW-pair)
\begin{equation*}
\xymatrix{
&K^0_\G(X,X\setminus Y)\ar[r] &K^0_\G(X)\ar[r]&K^0_\G(X\setminus Y)\ar[d]\\
&K^1_\G(X\setminus Y)\ar[u] &K^1_\G(X)\ar[l]&K^1_\G(X,X\setminus Y).\ar[l]}
\end{equation*}
If $X$ and $Y$ are smooth manifolds and the normal bundle of $Y$ in $X$ has a $\G$--invariant almost complex structure, then $K^i_\G(X,X\setminus Y)\simeq K_\G^{i+\codim Y}(Y)$.
\item [T3]  $K_\G ^*(\G \times_\HH X) \simeq K_\HH^*(X)$ for every $\HH$-space $X$, \cite[\S2(iii)]{Seg}.
\item [T4] $K^*_\G(-)$ is  $\G$-homotopy invariant.
\item [T5] $K^0_\G (\G /\HH)\simeq \Rep(\HH)$, $K^1_\G (\G /\HH)=0$.
\item [T6] If $\G$ is a connected compact group, then
$$K_\G^0(\G)=\Rep(\G)\simeq\Rep(\Tc)^W\,,$$ where  $\Tc \subset \G $ the is the maximal compact torus 
 and  $W$ is the Weyl group of $\G$.
\item [T7] (Thom isomorphism) There is a natural isomorphism of reduced K-theories: if $E\to X$ is an equivariant complex vector bundle, then $$\tilde K_\G ^*(Th(E))\simeq \tilde K^*_\G (X)\,,$$ where $Th(E)$ is the Thom space of $E$, \cite[Prop.~3.2]{Seg}.
It allows to define push-forward for proper maps of complex $\G $-manifolds. See also \cite{Josh}.
\item [T8] (Localization theorem.) Suppose that $\G =\Tc $ is a compact torus.
Then the kernel and the cokernel of the restriction to the fixed point set
$$K_\Tc ^*(X)\to K_\Tc ^*(X^\Tc )$$
are torsion $\Rep(\Tc )$-modules, \cite[Prop.~4.1]{Seg}.
\end{description}

\subsection{Comparison}
Suppose that $L$ is a maximal reductive subgroup of a linear algebraic group $G$ and $\G\subset L$ is a maximal compact subgroup. Then	
$$K^G_{alg}(\pt)\simeq \Rep(L)\simeq \Rep(\G)\simeq K^\G_{top}(\pt)\,.$$
The isomorphism is induced by the forgetful functor.
Similarly, let
 $H$ be an algebraic subgroup of $G$, $L$ the maximal reductive subgroup of $H$ and $\HH\subset L$ its maximal compact subgroup. We can assume that $\HH\subset \G$, then $G /H$ is $\G$--homotopy equivalent to $\G/\HH$.
The higher algebraic K theory and odd topological K-theory vanishes
and $K^0_{\G}(G/H)\simeq K^0_{\G}(\G/\HH)\simeq \Rep(\HH)$. We will generalize this fact to the situation we are interested in:

\begin{theorem} \label{K-LES}
If a smooth $G$-variety $V$ is a finite union of orbits, then  $K^1_\G(V)=0$ and the natural map
$$K^G_{alg}(V)\to K^\G_{top}(V)$$ is an isomorphism.
\end{theorem}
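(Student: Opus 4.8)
The plan is to induct on the orbits, peeling off one closed orbit at a time and comparing, along the comparison transformation $K^G_{alg}(-)\to K^\G_{top}(-)$, the algebraic localization sequence (property \textbf{A2}) with the six-term exact sequence of a pair in topological $K$-theory (property \textbf{T2}). Fix the linear order of orbits from the proof of Theorem~\ref{thm:axiomatic}, so that each $V_{\succeq\Theta}$ is open in $V$, $V_{\succ\Theta}$ is open in $V_{\succeq\Theta}$, and $\Theta=V_{\succeq\Theta}\setminus V_{\succ\Theta}$ is a single orbit which is smooth and closed in $V_{\succeq\Theta}$. I would prove by induction, following this order from the open orbit downward, that $K^1_\G(V_{\succeq\Theta})=0$ and that $K^G_{alg}(V_{\succeq\Theta})\to K^\G_{top}(V_{\succeq\Theta})$ is an isomorphism; the case $V_{\succeq\Theta}=V$ is the theorem.

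The base case is that of a single orbit $G/H$, treated as in the Comparison subsection: $K^G_{alg}(G/H)\simeq\Rep(L)$ by \textbf{A5}, $K^\G_{top}(G/H)\simeq K^\G_{top}(\G/\HH)\simeq\Rep(\HH)$ by \textbf{T3} and \textbf{T5}, the comparison map is the ring isomorphism $\Rep(L)\to\Rep(\HH)$ induced by the forgetful functor (unitary trick), and $K^1_\G(G/H)=K^1_\G(\G/\HH)=0$ by \textbf{T5}. For the inductive step put $X=V_{\succeq\Theta}$, $U=V_{\succ\Theta}$, $Y=\Theta$, and assume the statement for $U$. On the topological side, \textbf{T2} identifies $K^i_\G(X,X\setminus Y)$ with $K^{i+\codim Y}_\G(Y)$, and since $Y$ is a complex submanifold its real codimension is even, so $K^i_\G(X,X\setminus Y)\simeq K^i_\G(Y)$ and the composite $K^0_\G(Y)\simeq K^0_\G(X,X\setminus Y)\to K^0_\G(X)$ is the Gysin pushforward $(\iota_\Theta)_*$. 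As $K^1_\G(Y)=0$ (base case) and $K^1_\G(U)=0$ (induction), the hexagon collapses to a short exact sequence $0\to K^0_\G(Y)\xrightarrow{(\iota_\Theta)_*}K^0_\G(X)\xrightarrow{\iota_U^*}K^0_\G(U)\to 0$ together with the vanishing $K^1_\G(X)=0$. On the algebraic side, \textbf{A2} gives the exact sequence $K^G_{alg}(Y)\xrightarrow{(\iota_\Theta)_*}K^G_{alg}(X)\xrightarrow{\iota_U^*}K^G_{alg}(U)\to 0$; only this three-term portion is used, so no higher algebraic $K$-theory enters.

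The comparison maps assemble into a commuting ladder from this exact sequence to the topological short exact one: naturality for the open restriction $\iota_U^*$ is immediate, and naturality for the closed-embedding Gysin map $(\iota_\Theta)_*$ follows from the compatibility of the algebraic and topological pushforwards. The comparison map is an isomorphism on $Y$ (base case, applied to the single orbit $\Theta$) and on $U$ (induction), so a diagram chase — essentially a four-lemma, needing no information about the higher group $K^G_1(U)$ — shows it is an isomorphism on $X$; along the way one also obtains that $(\iota_\Theta)_*$ is injective on algebraic $K$-theory, the strengthening of the exact sequence~\eqref{eq:sequence} promised in the proof of Theorem~\ref{thm:axiomatic}. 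Iterating up to $V_{\succeq\Theta}=V$ completes the argument.

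I expect the main obstacle to be precisely the commutativity of that ladder: one must know that the natural transformation $K^G_{alg}\to K^\G_{top}$ commutes with Gysin pushforward along closed immersions — equivalently, that it can be realized as a morphism of localization long exact sequences — which has to be argued (e.g. via deformation to the normal cone) rather than taken for granted. A secondary point needing care is the degree bookkeeping in \textbf{T2}: the parity-shift-free form of excision used above is available only because the normal bundles of the orbits are genuine complex vector bundles. Finally, note that connectedness of stabilizers is never invoked here — the integral-domain argument one would normally use to see that $(\iota_\Theta)_*$ is injective algebraically is replaced by transporting injectivity from the topological side.
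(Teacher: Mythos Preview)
Your proposal is correct and follows essentially the same approach as the paper: the same linear order on orbits, the same induction comparing the algebraic localization sequence \textbf{A2} with the topological six-term sequence \textbf{T2} collapsed via \textbf{T5}, and the same five-lemma/diagram-chase conclusion. The paper resolves the commutativity-with-Gysin obstacle you flagged not by deformation to the normal cone but by a more hands-on observation: the topological pushforward of Segal uses the Koszul complex of the \emph{normal} bundle while the algebraic one uses the \emph{conormal} bundle, so one precomposes the comparison map with the duality $E\mapsto E^*$ on the topological side to make the square commute; since duality is itself an isomorphism this does no harm to the conclusion.
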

\begin{proof}
We proceed as in the proof of Theorem \ref{thm:axiomatic}. We fix a linear order of orbits and prove theorem inductively with respect to $\Theta$. At each step we have the long exact sequences (A2) of the pair $(V_{\succeq\Theta},V_{\succ\Theta})$
\begin{equation}\label{algKex}
\xymatrix{{}\ar[r]&K^G_{alg}(\Theta)\ar[r]^{(\iota^{alg}_\Theta)_*} &K^G_{alg}(V_{\succeq\Theta})\ar[r]&K^G_{alg}(V_{\succ\Theta})\ar[r]&0}\\
\end{equation}
and the corresponding sequence (T2) of topological K theory reduces to the short one, by (T5).
We obtain
\begin{equation}
\xymatrix{0\ar[r]&K^\G_{top}(\Theta)\ar[r]^{(\iota^{top}_\Theta)_*} &K^\G_{top}(V_{\succeq\Theta})\ar[r]&K^\G_{top}(V_{\succ\Theta})\ar[r]&0.}\\
\end{equation}
To compare $K^G_{alg}(V_{\succeq\Theta})$ with $K^\G_{top}(V_{\succeq\Theta})$ we apply the natural transformation of theories, which obviously commutes with the restriction to the open subset. The push forward $(\iota^{top}_\Theta)_*$  is defined in \cite{Seg} via the Koszul complex of the normal bundle $\Lambda^\bullet \nu(\Theta)$, while in algebraic K theory the Koszul complex of the conormal bundle is needed. We obtain commutativity by precomposing the natural map with the duality in the topological K theory $E\mapsto E^*$.
Since $K^G_{alg}(\Theta)\to K^\G_{top}(\Theta)$ is an isomorphism, the map
 $$K^G_{alg}(V_{\succeq\Theta}) \to K^\G_{top}(V_{\succeq\Theta}) $$
is an isomorphism as well. That is so with or without precomposing with the duality, since the duality itself is an isomorphism.
\end{proof}
\begin{remark}\rm The map $(\iota^{alg}_\Theta)_*$ is injective. The sequence \eqref{algKex}  extended by 0 from the left remains exact.\end{remark}

\begin{remark}\rm Note that
$$K_\G ^*(V)\simeq \bigoplus_{[x]\in V/\G }\Rep(\G _x)\,.$$
This is an isomorphism  of abelian groups, not necessarily $\Rep(\G)$-modules.
A similar  situation appears for rational equivariant cohomology, \cite[Th.~1.7]{FW}.\end{remark}

More generally, suppose $V$ is smooth complex $\G $-variety, with a filtration $$\emptyset=U_0\subset U_1\subset U_2\subset \dots \subset U_N=V$$ consisting of $\G $-invariant open sets, such that $U_k\setminus U_{k-1}$ is a complex submanifold of $U_k$, which is $\G $--homotopy equivalent to an orbit $\G /\G _k$ for $k=1,2,\dots,N$. Then additively
$$K_\G ^*(V)\simeq \bigoplus_{k=1}^N\Rep(\G _k)\,.$$
The proof is by inductive application of the exact sequence
\[
\xymatrix{
0\ar[r]&
\Rep(\G _k)\ar[r]&
K^*_\G (U_k)\ar[r]&
K^*_\G (U_{k-1})\ar[r]&0\,.}
\]
The results of this paper, in particular Theorem \ref{thm:intchar} and Theorem \ref{thm:axiomatic}, with additional work might apply to this general situation.

\subsection{Lefschetz--Riemann--Roch}
The localization to the fixed points in both theories are related with push-forwards by the Lefschetz formula
\begin{theorem} \label{LRRgeneral} Let $f:X\to Y$ be a proper equivariant map $\T$-varieties. Then for all $\omega\in K^\T(X)$
\[\frac{f_*( \omega ){|Y^\T}}{\lambda_{-1}(\nu^*(Y^\T))}
=
f_*^\T\left(\frac{\omega{|X^\T}}{\lambda_{-1}(\nu^*(X^\T))}\right)\,,
\]
where $f_*^\T:K^\T(X^\T)\otimes_{\Rep(\T)}Frac(\Rep(\T))\to K^\T(Y^\T)\otimes_{\Rep(\T)}Frac(\Rep(\T)) $ is the scalar extension of the restricted map $f$.\end{theorem}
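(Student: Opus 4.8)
The plan is to deduce the formula from the localization theorem, the self-intersection formula, and plain functoriality of push-forward, by a short diagram chase after passing to the fraction field $Frac(\Rep(\T))$ of the coefficient ring. Write $i_X\colon X^\T\hookrightarrow X$ and $i_Y\colon Y^\T\hookrightarrow Y$ for the inclusions of the fixed loci (smooth, since $X,Y$ are smooth $\T$-varieties, as throughout the paper); an equivariant map carries fixed points to fixed points, so $f$ restricts to a proper map $f^\T\colon X^\T\to Y^\T$ with $f\circ i_X=i_Y\circ f^\T$, and $f_*^\T$ is its $K$-theory push-forward extended to $Frac(\Rep(\T))$. First I would record, after $-\otimes_{\Rep(\T)}Frac(\Rep(\T))$: (a) the restriction maps $i_X^*$ and $i_Y^*$ are isomorphisms, by the localization theorem (A8), resp.\ (T8); and (b) $i_X^*(i_X)_*=\lambda_{-1}(\nu^*(X^\T))\cdot(-)$ by the self-intersection formula in (A7), resp.\ (T7), where $\lambda_{-1}(\nu^*(X^\T))$ is invertible because the normal bundle of $X^\T$ has no trivial weight (using that reduced $K$-theory of a finite dimensional variety is nilpotent), and likewise for $Y$. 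Together these say that $(i_X)_*$ and $(i_Y)_*$ are isomorphisms over $Frac(\Rep(\T))$, with inverses $\lambda_{-1}(\nu^*(X^\T))^{-1}\,i_X^*$ and $\lambda_{-1}(\nu^*(Y^\T))^{-1}\,i_Y^*$.

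Granting this, the rest is three lines. For $\omega\in K^\T(X)$, applying the inverse of $(i_X)_*$ gives, over $Frac(\Rep(\T))$,
\[
\omega=(i_X)_*\!\left(\frac{\omega{|X^\T}}{\lambda_{-1}(\nu^*(X^\T))}\right).
\]
Pushing forward by $f$ and using $f_*(i_X)_*=(i_Y)_*f_*^\T$ (functoriality applied to $f\circ i_X=i_Y\circ f^\T$),
\[
f_*\omega=(i_Y)_*\,f_*^\T\!\left(\frac{\omega{|X^\T}}{\lambda_{-1}(\nu^*(X^\T))}\right).
\]
Restricting to $Y^\T$ by $i_Y^*$ and applying the self-intersection formula for $i_Y$,
\[
f_*(\omega){|Y^\T}=\lambda_{-1}(\nu^*(Y^\T))\cdot f_*^\T\!\left(\frac{\omega{|X^\T}}{\lambda_{-1}(\nu^*(X^\T))}\right),
\]
and dividing by $\lambda_{-1}(\nu^*(Y^\T))$ gives the assertion.

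The one genuinely nontrivial input is (a), together with its upgrade to the invertibility of $(i_X)_*$ after inverting the torsion of $\Rep(\T)$; this is the Thomason/Segal concentration theorem, already used tacitly elsewhere in the paper, e.g.\ in \eqref{eq:K-localization} and in Proposition \ref{LRR}, and going back to \cite{Tho,Seg}. If one wished to avoid concentration in this strength, an alternative would be to factor the (here projective) map $f$ as a closed immersion $X\hookrightarrow\P^N\times Y$ followed by the projection $\P^N\times Y\to Y$, establish the formula for each factor --- for the immersion directly from the self-intersection formula, for the projection from the projective bundle formula, which reduces to the classical Lefschetz fixed point formula on $\P^N$ --- and compose. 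I expect the main nuisance in that route to be keeping track of the normal bundles of the fixed loci across the factorization; the diagram chase above is cleaner and is the one I would present.
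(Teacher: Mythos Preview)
The paper does not actually give its own proof of this theorem: immediately after the statement it simply records that ``This form of the localization theorem in K-theory can be found in \cite[Thm.~3.5]{Tho}, \cite[Thm.~5.11.7]{ChrissGinz} and originates from the results of \cite{BFQ} for finite groups.'' Your argument is precisely the standard proof one finds in those references --- invert the push-forward from the fixed locus using concentration plus the self-intersection formula, then use functoriality of $f_*$ along $f\circ i_X=i_Y\circ f^\T$ --- and it is correct as written. The alternative factorization route you sketch at the end is also standard and would work, but as you note the diagram chase is cleaner.
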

Here $\nu(X^\T)$ and $\nu(Y^\T)$ denote the normal bundles which might have varying dimensions over different components of the fixed point sets.
This form of  the localization  theorem K-theory can be found in \cite[Thm.~3.5]{Tho}, \cite[Thm. 5.11.7]{ChrissGinz} and originates from the results of \cite{BFQ} for finite groups. The topological counterpart is an extension of the well known Atiyah-Bott-Berline-Vergne localization formula in cohomology. If the fixed point sets are discrete, then the normal bundle becomes disjoint union of the tangent spaces and we arrive to the formulas like in Proposition \ref{LRR}.

\subsection{Completion and Chern character}

The Borel construction leads to a map
$$\pmb{\alpha}:K_\G ^*(X)\to K^*(E\G \times_\G  X)\,,$$
where the  K-theory of infinite CW-complexes are defined as homotopy classes of maps to the classifying space:
$$K^0(E\G \times_\G  X)=[E\G \times_\G  X\,,\,BU]\,,\qquad K^1(E\G \times_\G  X)=[E\G \times_\G  X\,,\,U]\,.$$ The map $\pmb{\alpha}$
extends to an isomorphism of the $I_\G $-adic completion
$$K^*_\G (X)_{I_\G }^\wedge\;\simeq\;K^*(E\G \times_\G  X)\,,$$
where  $I_\G \subset K^0_\G (\pt)=\Rep(\G )$ is the augmentation ideal
$$I_\G =
\ker(\dim:\Rep(\G )\to\Z)\,,$$ see \cite[Prop. 4.2]{AS}.
The Chern character of an equivariant vector bundle
is constructed by applying the nonequivariant Chern character to the associated vector bundle on $E\G \times_\G  X$. We obtain a map
$$ch_\G :K^*_\G (X)\to\hat H^*_\G (X;\Q)=\prod_{k=0}^\infty H^k_\G (X;\Q)$$
This Chern character may have a kernel which is not $\Z$-torsion.

\begin{example}\rm If $\G =S^1$, $\HH=\Z/(p)$, $X=\G /\HH$ then we have:
\begin{itemize}
\item The equivariant K-theory
$$K^*_\G (X)= \Rep(\HH)\simeq\Z[\Z/(p)]$$
is additively isomorphic to $\Z^p$.

\item
The K-theory of the Borel construction $K^*(E\G \times_\G X)\simeq K^*(B\HH)$ can be computed using the Atiyah-Segal completion theorem: $$K^*(B\Z/(p))\simeq \Rep(Z/(p))^\wedge_{I_{\Z/(p)}}\simeq \Z\oplus \Z^\wedge_p\,,$$
see \cite[\S8]{At}.

\item Rational equivariant cohomology is trivial in positive degrees
 $$\hat H^*(E\G \times_\G  X;\Q)=\hat H^*(B\Z/(p);\Q)\simeq \Q\,.$$
\end{itemize}
\end{example}
\bigskip

On the other  hand for connected groups we have inclusions:
If $\G =S^1$, then $K^*_\G (\pt)\simeq \Z[\xi,\xi^{-1}]$, $\hat H^*(B\G ;\Q)\simeq \Q[[x]]$ and the Chern character maps $\xi$ to $\exp(x)$. In this case the map $$K^*_\G (\pt)\longrightarrow \hat H^*(B\G ;\Q)$$ is injective.
In general, for a connected compact group $\G $ with the maximal torus $\Tc $ and the Weyl group $W$ we have a commutative diagram of injections
\[
\xymatrix{
\Rep(\Tc )^W\ar@{=}[r]&
\Rep(\G )\ar@{=}[r]&
K^*_\G (\pt)\ar@{^{(}->}[d]\ar@{^{(}->}[r]^{ch_\G \phantom{xx}}&
\hat H^*(B\G ;\Q)\ar@{=}[r]\ar@{^{(}->}[d]&
\hat H^*(B\Tc;\Q)^W\\
&\Rep(\Tc )\ar@{=}[r]&
K^*_\Tc (\pt)\ar@{^{(}->}[r]^{ch_\Tc \phantom{xx}}&
\hat H^*(B\Tc ;\Q)}
\]

\subsection{Bia{\l}ynicki-Birula-cells a.k.a.~attracting sets}
Assume that the variety $X$ is smooth and projective, and a torus $\T $ acts with finitely many fixed points. Then due to Bia{\l}ynicki-Birula decomposition (for a generic 1-parameter subgroup) $X$ is a sum of $\T $-equivariant algebraic cells. This is a filtrable system, but in general it is not a stratification. Then $K^\T_{alg}(X)\simeq K^\Tc_{top}(X)$ for the corresponding compact torus $\Tc$, and the equivariant K-theory has no $\Rep(\T )$-torsion. By the localization theorem the restriction map
$K_\T^*(X)\to K_\T^*(X^\T)$ is an injection.
Therefore for such torus actions we may check identities involving motivic Chern classes in the equivariant cohomology of the fixed points. We do not know to what extent Theorem \ref{thm:axiomatic} holds in that generality.

\end{document}